\PassOptionsToPackage{a4paper,margin=25mm}{geometry}

\RequirePackage{iftex}
\ifpTeX
  \PassOptionsToPackage{dvipdfmx}{graphicx}
  \PassOptionsToPackage{dvipdfmx}{hyperref}
\fi

\documentclass[12pt]{article}
\usepackage{da-common}
\usepackage{paper2-h4}
\usetikzlibrary{arrows.meta}

\title{Affine Angles via Area Cross Ratio and Isoptic Hyperbolas}
\author{%
Masanori Nakazato\\
Mita International School of Science, Tokyo, Japan\\
\texttt{masa727axio.math@gmail.com}\\[6pt]
\small Formerly at the Graduate School of Science, Tohoku University (M.Sc.)
}
\date{March 24, 2026}

\begin{document}
	\maketitle

\begin{abstract}
Affine geometry is usually regarded as a framework in which metric notions such
as distance and angle are absent. However, just as projective geometry produces
various metric geometries by introducing additional structures on the line at infinity,
affine geometry can also serve as a natural basis for an angular geometry once certain
directions at infinity are fixed.

In this paper we introduce an affine angle determined by two fixed directions on
the line at infinity and defined via an area cross ratio. This quantity is invariant under
affine transformations preserving the chosen directions.

We show that the locus of points from which a fixed segment is seen under a constant
affine angle is a hyperbola whose asymptotes are parallel to the chosen directions.
This provides an affine analogue of the classical fact that in Euclidean geometry the
isoptic curve of a segment is a circle.

Furthermore, we establish that this angle arises as a parabolic degeneration of the
Cayley--Klein angle, and that the same quantity naturally appears in a power theorem
associated with hyperbolas.

These results provide a unified perspective linking affine angles, isoptic hyperbolas,
and hyperbolic power through the area cross ratio.
\end{abstract}

\subsection*{Notation}
We summarize the main notation used throughout this paper.

\begin{itemize}
\item $U, V$ : lines passing through the origin $O$ with directions $u$ and $v$, respectively.
\item $\Lambda$ : an auxiliary line intersecting both $U$ and $V$.
\item $P_L$ : for a line $L$, we denote $P_L := L \cap \Lambda$.
      In particular, $P_U := U \cap \Lambda$ and $P_V := V \cap \Lambda$.
\item $\sigma_\Lambda(L)$ : a quantity defined via a ratio of triangle areas.
\item $\mathrm{CR}_{\mathrm{area}}(L_1,L_2;R_1,R_2)$ : the area cross ratio.
\item $\phi_{u,v}(O;A,B)$ : the $(u,v)$-affine angle.
\end{itemize}

\section{Introduction}

\subsection{Motivation}

In this paper, we reconsider the notion of angle in geometry.
Since affine transformations do not, in general, preserve distances or angles,
it has been commonly believed that a natural notion of angle cannot be defined
within affine geometry.

However, this viewpoint relies on the Euclidean notion of angle.
In this work, we instead construct an affine-invariant quantity
and define an angle based on it.

More precisely, we introduce an angle defined via the area cross ratio,
which is invariant under the subgroup of affine transformations preserving
two fixed directions $u,v$ on the line at infinity:
\[
\phi_{u,v}(O;A,B)
=
\frac{1}{2}\log \mathrm{CR}_{\mathrm{area}}(L_A,L_B;U,V).
\]
We then study the geometric properties of this quantity.

\subsection{Main Results}

The main result of this paper is summarized in the following theorem.
The most striking geometric feature of this angle is that
the locus of points from which a fixed segment is seen under a constant affine angle
is completely described by a hyperbola.

\begin{maintheorem}[Fundamental Theorem of Affine Angle Geometry]
Fix two directions $u,v$ on the line at infinity,
and let $U,V$ be lines parallel to them.
For two rays $OA, OB$ with vertex $O$, define
\[
\phi_{u,v}(O;A,B)
=
\frac{1}{2}\log \mathrm{CR}_{\mathrm{area}}(L_A,L_B;U,V).
\]

Then this quantity satisfies the fundamental properties of an angle,
namely antisymmetry, additivity, vanishing, scaling invariance,
and continuity, and is invariant under the subgroup of affine transformations
preserving the directions $u,v$.

Moreover, the following hold:
\begin{enumerate}
\item This angle arises naturally as a parabolic degeneration of the Cayley--Klein angle.
\item For fixed points $A,B$, the locus of points $P$ satisfying
\[
\phi_{u,v}(P;A,B)=\theta
\]
is a hyperbola whose asymptotes are parallel to $u$ and $v$.
\end{enumerate}
\end{maintheorem}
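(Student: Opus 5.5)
The plan is to reduce everything to coordinates adapted to the two fixed directions. Apply an affine change of coordinates sending $u$ and $v$ to the coordinate axis directions. The subgroup preserving $u,v$ then acts on directions by diagonal linear maps $(x,y)\mapsto(\alpha x,\beta y)$, composed with translations. A line $L$ through $O$ with direction $(p,q)$, $p,q\neq 0$, has a well-defined slope $m_L=q/p$.

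The first step is to compute the area cross ratio in these coordinates. Choose an auxiliary line $\Lambda$ meeting $U$ and $V$. The quantity $\sigma_\Lambda(L)$ is a ratio of triangle areas built from $O$, $P_L$, $P_U$, $P_V$. Since triangles sharing the vertex $O$ with bases on $\Lambda$ have areas proportional to their base lengths, $\sigma_\Lambda(L)$ becomes the affine ratio $\overline{P_UP_L}/\overline{P_LP_V}$ along $\Lambda$. A direct computation then shows
\[
\mathrm{CR}_{\mathrm{area}}(L_A,L_B;U,V)=\frac{m_B}{m_A}
\]
up to the chosen orientation convention. In particular, this expression is independent of $\Lambda$. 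So
\[
\phi_{u,v}(O;A,B)=\tfrac12\log(m_B/m_A),
\]
defined when $m_A,m_B$ have the same sign. This is the step I expect to need the most care. One has to fix signs and orientation conventions so that the cross ratio is positive on the relevant sector, and check $\Lambda$-independence. Everything else follows quickly from this formula.

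With the formula in hand, the angle properties are immediate:
\begin{itemize}
\item Antisymmetry: swapping $A$ and $B$ inverts the ratio.
\item Additivity: $\phi(O;A,B)+\phi(O;B,C)=\phi(O;A,C)$ because logarithms of telescoping ratios add.
\item Vanishing: $\phi=0$ exactly when $L_A=L_B$.
\item Scaling invariance: replacing $A$ by $O+t(A-O)$ with $t>0$ does not change the line.
\item Continuity: slopes depend continuously on the points away from $U$ and $V$.
\end{itemize}
For invariance, translations fix directions. A diagonal map multiplies every slope by $\beta/\alpha$, so the ratio $m_B/m_A$ is unchanged.

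For item 1, I will use the Cayley--Klein angle between lines $L_A,L_B$ relative to an absolute pair of lines through $O$ with slopes $\pm\varepsilon$-perturbed directions. Concretely, take the Laguerre formula $\frac12\log\mathrm{CR}(L_A,L_B;I_1,I_2)$ with absolute lines $I_1,I_2$ degenerating. Letting the absolute conic degenerate to the pair of points $u,v$ at infinity, the projective cross ratio of the four lines through $O$ tends to the cross ratio of slopes $(m_A,m_B;0,\infty)=m_B/m_A$. This is exactly the area cross ratio from the first step. I will argue by continuity of the cross ratio in its arguments.

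For item 2, let $P=(x,y)$, $A=(a_1,a_2)$, $B=(b_1,b_2)$. The condition $\phi=\theta$ becomes
\[
(b_2-y)(a_1-x)=e^{2\theta}(a_2-y)(b_1-x).
\]
Expanding, the quadratic part is $(1-e^{2\theta})xy$. For $\theta\neq0$ this is a conic whose quadratic form is a multiple of $xy$. Completing the product, it is a hyperbola $(x-x_0)(y-y_0)=c$ with asymptotes parallel to the axes, that is, to $u$ and $v$. It passes through $A$ and $B$, so $c\neq0$ unless $A,B$ are aligned with an asymptote direction, which I will exclude as degenerate. The resulting locus consists of the points of this hyperbola satisfying the sign condition, minus $A$ and $B$.
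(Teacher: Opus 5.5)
Your proposal is correct. For the angle properties, the invariance, and item~1 it is the paper's argument: normalize $u,v$ to the coordinate axes, note that $\sigma_\Lambda(L)$ is a $\Lambda$-dependent constant times the slope $m$, reduce $\phi$ to $\frac12\log$ of a ratio of slopes, and identify this with $cr(m_A,m_B;0,\infty)$, the cross ratio of the directions against $U_\infty,V_\infty$.

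One correction: with the paper's convention $cr(x,y;0,\infty)=x/y$ you get $\mathrm{CR}_{\mathrm{area}}(L_A,L_B;U,V)=m_A/m_B$, not $m_B/m_A$. An orientation sign on $\sigma$ cancels in the ratio and cannot produce the reciprocal. An example is the sign separating your $\overline{P_UP_L}/\overline{P_LP_V}$ from the paper's $[OP_LP_U]/[OP_LP_V]=\overline{P_LP_U}/\overline{P_LP_V}$. So as written you are computing $-\phi$. This is harmless for every claim, since in item~2 it only exchanges $\theta$ and $-\theta$, but you should fix it to match the definition.

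Item~2 is where you take a genuinely different route. The paper normalizes $A=(-1,0)$, $B=(1,0)$, $u=(1,1)$, $v=(1,-1)$ and computes $\sigma$ on $\Lambda=AB$ with $U,V$ drawn through $P$. It reaches the symmetric normal form $p^2-(q+\beta)^2=1-\beta^2$ with $\beta=\coth\theta$. That route buys the explicit center, the rapidity parametrization, and the limits $\theta\to0,\infty$.

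You instead keep $u,v$ as axes and $A,B$ arbitrary, and evaluate the slope formula at the vertex $P$. The quadratic part $(1-e^{2\theta})xy$ then shows the asymptotic directions with no further work. Completing the product gives $c=-(a_1-b_1)(a_2-b_2)/(4\sinh^2\theta)$. After the change of basis this matches the paper's $1/\sinh^2\theta$. It also shows that degeneration happens exactly when $AB$ is parallel to $u$ or $v$, a case the paper's normalization excludes without saying so.

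Your route also settles the admissibility question completely. A line parallel to an asymptote meets the hyperbola in a single point, so the lines through $A$ and $B$ in the directions $u,v$ meet it only at $A$ and $B$. At every other point of the hyperbola both slopes are therefore finite and nonzero, and dividing your cleared equation by $(a_1-x)(b_1-x)$ forces their ratio to be the positive constant $e^{2\theta}$. Hence the sign condition is automatic, and the locus is the whole hyperbola minus $\{A,B\}$, not merely ``the part satisfying the sign condition.''
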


We first introduce the area cross ratio as an affine invariant,
and define the affine angle based on it.
We then verify that this quantity satisfies the axioms of angle.
Next, we demonstrate that the corresponding invariant transformation group
and the isoptic loci naturally lead to hyperbolas.
Finally, we explain that this angle can be understood
as a parabolic degeneration in Cayley--Klein geometry
(cf.~\cite{Klein1871, Klein1873}),
and establish a corresponding power theorem for hyperbolas.

\subsection{Position of the Affine Angle}

The axiom system A1--A5 for angles introduced in Nakazato~\cite{Nakazato2025Base1}
is based on the axiomatic approach originating from Hilbert's geometry~\cite{Hilbert1899}.
However, these axioms do not specify a unique geometric realization;
rather, any quantity satisfying them can serve as an angle
in different geometric settings.

In difference-angle geometry, this axiom system is realized
by the difference of slopes.
The affine angle introduced in this paper provides another realization
of the same axioms.

This affine angle is inspired by the cross ratio in projective geometry
and is constructed from an affine invariant based on area ratios.
Although it is not invariant under the full affine group,
it is invariant under the subgroup preserving two directions
on the line at infinity,
which is formally analogous to the invariance of Euclidean angles.

Moreover, the corresponding isoptic loci appear as hyperbolas.
This parallels the classical facts that isoptic loci are circles
in Euclidean geometry and parabolas in difference-angle geometry.

Thus, under the same axiom system for angles,
different realizations give rise to different conic sections,
showing that the axioms of angle provide a framework
allowing multiple geometric realizations rather than determining a single geometry.

\subsection{The Affine Plane as a Geometric Platform}

Projective geometry provides a framework in which many geometric properties
can be described without introducing a metric.
By equipping the projective plane with an absolute conic
and considering the group of projective transformations preserving it,
one obtains Cayley--Klein geometries such as Euclidean and hyperbolic geometry.

On the other hand, affine geometry is obtained
by selecting a line in the projective plane
and designating it as the line at infinity $\ell_\infty$.
The projective transformations preserving $\ell_\infty$
form the affine transformation group,
which can be expressed as linear transformations combined with translations.

Affine geometry studies properties invariant under this group.

If one can define an angle based on suitable affine invariants
within this framework,
then affine geometry provides a natural platform
for constructing a new angular geometry.

\begin{figure}[htbp]
  \centering
  \begin{minipage}{0.80\textwidth}
    \centering
    \includegraphics[width=\linewidth]{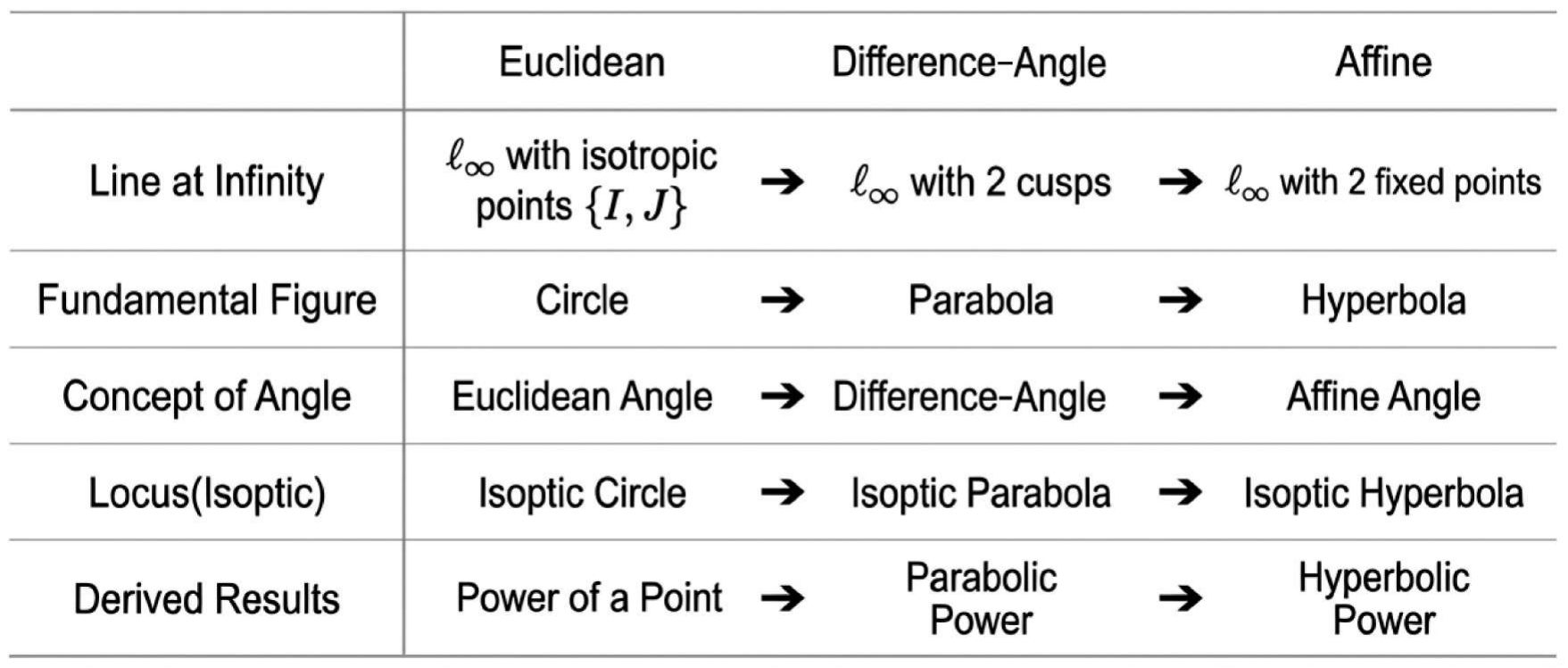}
    \subcaption{}
    \label{fig:geometry-correspondence}
  \end{minipage}
\caption{
Correspondence between circle, parabola, and hyperbola geometries.
Each geometry admits an angle, an isoptic locus of a segment,
and an associated power theorem, forming a unified structure.
}
\end{figure}

\section{Affine Angle}

In this section, we first introduce a quantity $\sigma_\Lambda(L)$
associated with a ray $L$ using an auxiliary line $\Lambda$.
We then define the area cross ratio $CR_{\mathrm{area}}$
as a quantity independent of $\Lambda$.
Finally, the affine angle is defined as the logarithm of this invariant.

Throughout this paper, we fix an ordered pair of directions $(u,v)$.
Let $U$ and $V$ be the lines through a point $O$
with directions $u$ and $v$, respectively.
These lines $U$ and $V$ serve merely as carriers of the directions,
and the essential structure of the angle depends only on the ordered pair $(u,v)$.

\begin{figure}[htbp]
  \centering
 \begin{subfigure}{0.32\textwidth}
    \centering
  \includegraphics[scale=0.6]{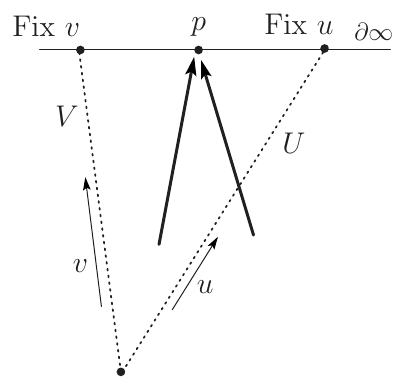}
  \caption{}
  \label{fig:affine-angle-ivt}
  \end{subfigure}
  \begin{subfigure}{0.32\textwidth}
  \centering
  \includegraphics[scale=0.6]{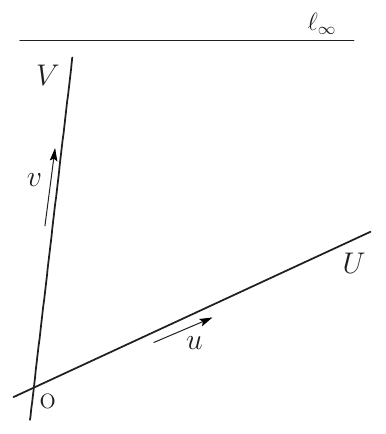}
  \caption{}
  \label{fig:u-v-fix}
  \end{subfigure}
  \begin{subfigure}{0.32\textwidth}
  \centering
  \includegraphics[scale=0.6]{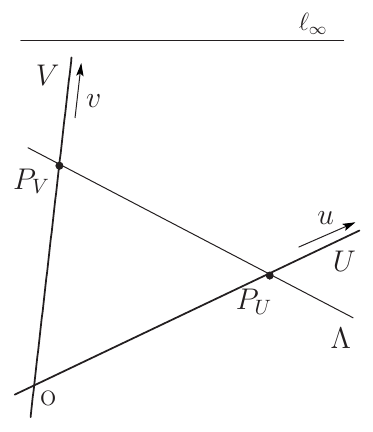}
  \caption{}
  \label{fig:Lambda-cross-points}
  \end{subfigure}
\caption{(a) Affine angle represented on the line at infinity. 
(b) Affine plane with fixed directions $u$ and $v$ defining the angle.}
\end{figure}

\bigskip

We denote by $[XYZ]$ the signed area of the triangle $XYZ$.
That is, if $\vec{XY}=(p,q)$ and $\vec{XZ}=(r,s)$, then
\[
[XYZ]=\frac{1}{2}(ps-qr).
\]

\begin{definition}[$\Lambda$-Area Ratio]
Let $\Lambda$ be a line intersecting both $U$ and $V$,
and define $P_U=\Lambda\cap U$, $P_V=\Lambda\cap V$.
For a ray $L$ emanating from $O$, let $P_L=L\cap\Lambda$,
assuming $P_L\neq P_U,P_V$.
We define
\[
\sigma_\Lambda(L):=\frac{[OP_LP_U]}{[OP_LP_V]}
\]
and call it the $\Lambda$-area ratio.
\end{definition}

\begin{figure}[htbp]
  \centering
 \begin{subfigure}{0.32\textwidth}
    \centering
  \includegraphics[scale=0.6]{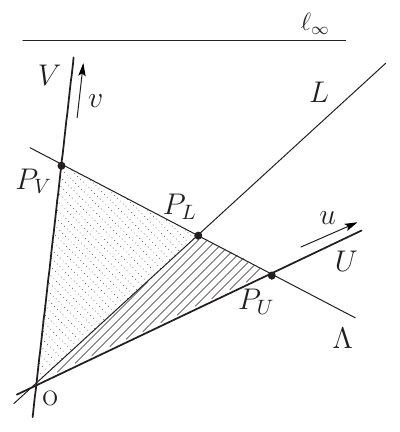}
  \caption{}
  \label{fig:Lambda-area-ratio-on-L}
  \end{subfigure}
  \begin{subfigure}{0.32\textwidth}
  \centering
  \includegraphics[scale=0.6]{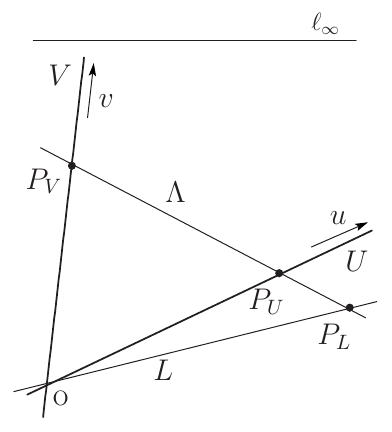}
  \caption{}
  \label{fig:positive-sigma-lambda-L}
  \end{subfigure}
  \begin{subfigure}{0.32\textwidth}
  \centering
  \includegraphics[scale=0.6]{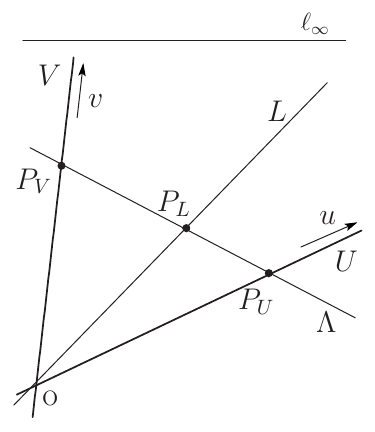}
  \caption{}
  \label{fig:negative-sigma-lambda-L}
  \end{subfigure}
\caption{(a) Definition of the $\Lambda$-area ratio.
(b) Case $\sigma_\Lambda(L) > 0$.
(c) Case $\sigma_\Lambda(L) < 0$.}
\end{figure}

\begin{lemma}[Sign of $\sigma_\Lambda(L)$]\label{lem:sign-sigma-Lambda}
The set $\Lambda\setminus\{P_U,P_V\}$ is divided into three connected components,
and the sign of $\sigma_\Lambda(L)$ is constant on each component.
More precisely:
\begin{itemize}
\item If $P_L\in (P_U,P_V)$, then $\sigma_\Lambda(L)<0$.
\item If $P_L\in \Lambda\setminus [P_U,P_V]$, then $\sigma_\Lambda(L)>0$.
\end{itemize}
\end{lemma}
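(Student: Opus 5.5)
Since $O,P_L,P_U,P_V$ all involve the common vertex $O$ and the last three points lie on the single line $\Lambda$, I will parametrize $\Lambda$ affinely and reduce both triangle areas to a common factor times a signed length on $\Lambda$. First I note that $O\notin\Lambda$. If $O$ were on $\Lambda$, then, since $\Lambda$ meets $U$ and $V$ and both pass through $O$, we would get $P_U=P_V=O$, and $\Lambda$ could not meet them in two distinct points as the definition of $\sigma_\Lambda$ implicitly requires. So I will take $O\notin\Lambda$ and $P_U\neq P_V$, with $U\neq V$ since $u\neq v$.

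The setup is as follows. Choose a point $Q_0\in\Lambda$ and a direction vector $w$ of $\Lambda$, and write $P_U=Q_0+a w$, $P_V=Q_0+b w$ and $P_L=Q_0+t w$ with $a\neq b$ and $t\neq a,b$. Using the bilinear, alternating formula for $[XYZ]$, I compute
\[
[OP_LP_U]=\tfrac12\det\bigl(\vec{OP_L},\vec{OP_U}\bigr)=\tfrac12\det\bigl(\vec{OQ_0}+t w,\ \vec{OQ_0}+a w\bigr)=\tfrac12 (a-t)\det\bigl(\vec{OQ_0},w\bigr),
\]
and similarly $[OP_LP_V]=\tfrac12 (b-t)\det(\vec{OQ_0},w)$. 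The constant $c:=\det(\vec{OQ_0},w)$ is nonzero exactly because $O\notin\Lambda$. Hence
\[
\sigma_\Lambda(L)=\frac{a-t}{b-t}=\frac{t-a}{t-b}.
\]
This formula is well defined and nonzero for $t\neq a,b$.

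The sign analysis is then elementary. The map $t\mapsto (t-a)/(t-b)$ is continuous on $\mathbb{R}\setminus\{a,b\}$, which consists of three open intervals. These correspond to the three connected components of $\Lambda\setminus\{P_U,P_V\}$: the open segment $(P_U,P_V)$ and the two open rays outside $[P_U,P_V]$. When $t$ lies strictly between $a$ and $b$, the factors $t-a$ and $t-b$ have opposite signs, so $\sigma_\Lambda<0$. Outside the closed interval they have the same sign, so $\sigma_\Lambda>0$. Constancy of the sign on each component follows either from this direct check or from continuity together with nonvanishing.

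I do not expect a genuine obstacle here. The only points needing care are the justification that $O\notin\Lambda$, so that the common factor $c$ does not vanish, and the remark that the ratio is independent of the choice of $Q_0$, $w$ and orientation conventions. That independence holds because both areas carry the same factor $c$, which cancels in the ratio.
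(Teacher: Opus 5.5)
Your proof is correct and follows the same route as the paper. Both arguments reduce the sign of $\sigma_\Lambda(L)$ to whether the signed areas $[OP_LP_U]$ and $[OP_LP_V]$ have opposite or equal signs, according as $P_L$ lies between $P_U$ and $P_V$ or not. Your parametrization gives $[OP_LP_U]=\tfrac12(a-t)c$ and $[OP_LP_V]=\tfrac12(b-t)c$, with $c\neq0$ because $O\notin\Lambda$; this makes explicit the step the paper only asserts with ``hence''.
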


\begin{proof}
Fix an orientation of the affine plane.
If $P_L\in (P_U,P_V)$, then the points $P_U, P_L, P_V$ are collinear,
with $P_L$ lying between $P_U$ and $P_V$.
Hence the signed areas $[OP_LP_U]$ and $[OP_LP_V]$
have opposite signs, and therefore
\[
\sigma_\Lambda(L)<0.
\]

If $P_L\in \Lambda\setminus [P_U,P_V]$, then
$[OP_LP_U]$ and $[OP_LP_V]$ have the same sign,
and thus $\sigma_\Lambda(L)>0$.
\end{proof}

Although $\sigma_\Lambda(L)$ depends on $\Lambda$ and $L$,
the signed areas are scaled by a common factor under affine transformations,
so their ratio is preserved.
Therefore, this quantity is affine-natural
and serves as the basis for constructing the area cross ratio
and the affine angle.

\begin{definition}[Area Cross Ratio]
For four rays $L_1, L_2, R_1, R_2$ emanating from $O$,
consider the cross ratio
\[
cr\bigl(\sigma_\Lambda(L_1),\ \sigma_\Lambda(L_2);\ 
\sigma_\Lambda(R_1),\ \sigma_\Lambda(R_2)\bigr).
\]
We define the area cross ratio by
\[
CR_{\mathrm{area}}(L_1,L_2;R_1,R_2)
:=
cr\bigl(\sigma_\Lambda(L_1),\ \sigma_\Lambda(L_2);\ 
\sigma_\Lambda(R_1),\ \sigma_\Lambda(R_2)\bigr).
\]
\end{definition}

\begin{theorem}[$CR_{\mathrm{area}}$ is independent of $\Lambda$]\label{thm:CR-Lambda-property}
The quantity $CR_{\mathrm{area}}(L_1,L_2;R_1,R_2)$
does not depend on the choice of $\Lambda$.
\end{theorem}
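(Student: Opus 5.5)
We reduce everything to coordinates adapted to the directions $u,v$. Take $O$ as origin and write every vector as $w=\alpha u+\beta v$. A ray $L$ from $O$ then has a direction $\ell=a u+b v$, and the natural projective coordinate of $L$ is the ratio $t(L):=a/b$. This ratio depends only on $L$ and on the fixed pair $(u,v)$; it does not involve $\Lambda$. The plan is to show that $\sigma_\Lambda(L)$ is a Möbius (linear fractional) function of $t(L)$, of a very restricted form. Cross ratios are invariant under Möbius maps, so the independence of $\Lambda$ will follow.

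\textbf{Computing $\sigma_\Lambda(L)$.} Write $P_U=p\,u$ and $P_V=q\,v$ with $p,q\neq 0$; this is possible because $\Lambda$ meets $U$ and $V$. Write $P_L=\lambda\ell$, where $\lambda$ is determined by $P_L\in\Lambda$. Put $D:=\det(u,v)$. Then
\[
[OP_LP_U]=\tfrac12\det(\lambda\ell,p u)=-\tfrac12\lambda p\,b\,D,
\qquad
[OP_LP_V]=\tfrac12\det(\lambda\ell,q v)=\tfrac12\lambda q\,a\,D .
\]
The factors $\lambda$ and $D$ cancel, so
\[
\sigma_\Lambda(L)=-\frac{p}{q}\cdot\frac{b}{a}=-\frac{p}{q}\cdot\frac{1}{t(L)} .
\]
The step to check carefully is that $\lambda$ cancels. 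It does, because both areas share the vertex $P_L$. This is exactly why the point $P_L$ drops out, and with it all dependence on $\Lambda$ except through the constant $c_\Lambda:=-p/q$.

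\textbf{Concluding.} The previous step shows that $\sigma_\Lambda(L)=c_\Lambda\,s(L)$ with $s(L):=b/a$. Here $s(L)$ is intrinsic to $L$ and $(u,v)$, and $c_\Lambda\neq 0$ depends only on $\Lambda$. The cross ratio $cr(x_1,x_2;x_3,x_4)$ is homogeneous of degree zero under a common scaling $x_i\mapsto c\,x_i$, since each factor $x_i-x_j$ scales by $c$ and the factors cancel in numerator and denominator. Therefore
\[
CR_{\mathrm{area}}(L_1,L_2;R_1,R_2)=cr\bigl(s(L_1),s(L_2);s(R_1),s(R_2)\bigr),
\]
and the right-hand side is visibly independent of $\Lambda$.

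I expect the only real obstacle to be bookkeeping rather than substance. It consists of three checks:
\begin{itemize}
\item The computation uses directions of full lines, so it is compatible with $L$ being a ray. If $\Lambda$ meets the line of $L$ on the opposite side of $O$, then $\lambda<0$, but $\lambda$ cancels anyway.
\item The hypotheses $P_L\neq P_U,P_V$ correspond exactly to $a\neq 0$ and $b\neq 0$, so $s(L)$ is finite and nonzero.
\item The values $s(L_1),\dots,s(R_2)$ must be distinct enough for the cross ratio to be defined. Because $c_\Lambda\neq 0$, this condition does not depend on $\Lambda$.
\end{itemize}
The argument is also consistent with Lemma~\ref{lem:sign-sigma-Lambda}, since the sign of $\sigma_\Lambda$ is governed by the sign of $ab$ together with the sign of $-p/q$.
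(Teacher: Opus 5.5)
Your proof is correct and follows essentially the same route as the paper. The paper normalizes $U,V$ to the coordinate axes, shows that $\sigma_\Lambda(L)$ equals a constant depending only on $\Lambda$ times the slope of $L$, and concludes from the invariance of the cross ratio under a common scaling. Your determinant computation in the $(u,v)$ basis is a slightly cleaner version of this: it needs no sign restrictions on $\Lambda$ or $L$ (the paper takes $\Lambda$ with positive intercepts and $L$ with positive slope), and the constant $-p/q$ comes out with the sign predicted by Lemma~\ref{lem:sign-sigma-Lambda}.
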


\begin{proof}
By applying a suitable affine transformation $F$, we normalize the configuration
so that $U$ becomes the $x$-axis and $V$ becomes the $y$-axis.
Let $\Lambda$ be given by $ax+by=1$ with $a>0$, $b>0$.

Let a ray $L$ be given by $y=mx$ with $m>0$.
Then
\[
P_U=\left(\frac{1}{a},0\right), \quad
P_V=\left(0,\frac{1}{b}\right), \quad
P_L=\left(\frac{1}{a+bm},\frac{m}{a+bm}\right).
\]
Hence,
\[
[OP_LP_U]=\frac{m}{2a(a+bm)}, \quad
[OP_LP_V]=\frac{1}{2b(a+bm)}.
\]
Therefore,
\[
\sigma_\Lambda(L)
=
\frac{\frac{m}{2a(a+bm)}}{\frac{1}{2b(a+bm)}}
=
\frac{b}{a}m.
\]
Note that $\frac{b}{a}$ depends only on $\Lambda$ and not on $L$.

Next, for another line $\Lambda'$ given by $a'x+b'y=1$ with $a'>0$, $b'>0$,
we obtain
\[
\sigma_{\Lambda'}(L)=\frac{b'}{a'}m,
\]
and hence
\[
\sigma_{\Lambda'}(L)=c\,\sigma_\Lambda(L),
\qquad
\left(c=\frac{b'/a'}{b/a}=\mathrm{const.}\right).
\]

It follows that
\begin{align*}
cr\bigl(\sigma_{\Lambda'}(L_1),\sigma_{\Lambda'}(L_2);\sigma_{\Lambda'}(R_1),\sigma_{\Lambda'}(R_2)\bigr)
&=
cr\bigl(c\sigma_{\Lambda}(L_1),c\sigma_{\Lambda}(L_2);
c\sigma_{\Lambda}(R_1),c\sigma_{\Lambda}(R_2)\bigr)\\
&=
cr\bigl(\sigma_{\Lambda}(L_1),\sigma_{\Lambda}(L_2);
\sigma_{\Lambda}(R_1),\sigma_{\Lambda}(R_2)\bigr).
\end{align*}
\end{proof}

As boundary cases, if $L\parallel U$, then $\sigma_\Lambda(L)=0$.
If $L\parallel V$, we interpret
\[
\sigma_\Lambda(L)\to +\infty \qquad (\mathrm{slope}(L)\to \mathrm{slope}(V))
\]
as a limit.
These cases can be treated naturally by extending the cross ratio to limits,
and the scaling invariance is preserved.

\begin{theorem}[Affine invariance of $CR_{\mathrm{area}}$]
For any affine transformation $T$, we have
\[
CR_{\mathrm{area}}\bigl(T(L_1),T(L_2);T(R_1),T(R_2)\bigr)
=
CR_{\mathrm{area}}(L_1,L_2;R_1,R_2).
\]
\end{theorem}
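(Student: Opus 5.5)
The plan is to reduce the statement to the fact that an affine map multiplies all signed areas by the same constant, and then to use Theorem~\ref{thm:CR-Lambda-property} to absorb the change of auxiliary line. I read the statement as follows. The rays $T(L_1),T(L_2),T(R_1),T(R_2)$ emanate from $T(O)$. Their area cross ratio is computed with respect to the transported reference lines $T(U),T(V)$ through $T(O)$. When $T$ lies in the subgroup preserving the directions $u,v$, these are exactly the lines through $T(O)$ parallel to $u$ and $v$, so in that case the reference data are unchanged.

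First, write $T(x)=Mx+w$ with $\det M\neq 0$. For any triangle $XYZ$ we have $\overrightarrow{T(X)T(Y)}=M\overrightarrow{XY}$, and therefore
\[
[T(X)T(Y)T(Z)]=(\det M)\,[XYZ].
\]
The translation $w$ drops out because only difference vectors enter the signed area.

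Second, fix an auxiliary line $\Lambda$ meeting $U$ and $V$, and put $\Lambda'=T(\Lambda)$. An affine bijection preserves incidence, so $\Lambda'$ meets $T(U)$ and $T(V)$ at $T(P_U)$ and $T(P_V)$. For each ray $L$ it also meets $T(L)$ at $T(P_L)$, and the nondegeneracy conditions $P_L\neq P_U,P_V$ are preserved. Applying the area identity to numerator and denominator gives
\[
\sigma_{\Lambda'}\bigl(T(L)\bigr)=\frac{(\det M)[OP_LP_U]}{(\det M)[OP_LP_V]}=\sigma_\Lambda(L)
\]
for every ray $L$. This holds even when $\det M<0$, since the common factor, and hence any orientation reversal, cancels in the ratio.

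Third, substitute into the definition. The cross ratio of the four values $\sigma_{\Lambda'}(T(L_1)),\dots,\sigma_{\Lambda'}(T(R_2))$ equals that of $\sigma_\Lambda(L_1),\dots,\sigma_\Lambda(R_2)$. By Theorem~\ref{thm:CR-Lambda-property}, the left-hand side equals $CR_{\mathrm{area}}(T(L_1),T(L_2);T(R_1),T(R_2))$ for any admissible auxiliary line, and the right-hand side is $CR_{\mathrm{area}}(L_1,L_2;R_1,R_2)$. This proves the claim.

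The main obstacle is not the computation but the bookkeeping. First, the statement has to be read correctly: the reference lines must be transported along with the rays, otherwise the claim fails for general $T$. Second, the boundary cases need care. When some ray is parallel to $U$ or $V$, $\sigma$ is $0$ or is defined by a limit. For these I would use continuity: $T$ is a homeomorphism on directions, so it sends a sequence of rays converging to $V$ to a sequence converging to $T(V)$. The pointwise equality $\sigma_{\Lambda'}(T(L))=\sigma_\Lambda(L)$ then passes to the limit, and so does the extended cross ratio.
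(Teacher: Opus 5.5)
Your proof is correct and follows the paper's argument: an affine map multiplies every signed area by one common factor, so each $\sigma$ and hence the cross ratio is unchanged. Your version is more careful than the paper's one-line proof, since you transport $\Lambda$ explicitly, use the signed factor $\det M$ (the paper writes $|\det T|$), and treat the boundary cases. One side remark is wrong, although nothing in your proof depends on it: $\sigma_\Lambda(L)=(s_L-s_U)/(s_L-s_V)$ is a M\"obius function of the affine parameter $s_L$ of $P_L$ along $\Lambda$, so $CR_{\mathrm{area}}$ of four arbitrary rays is just the classical cross ratio of four concurrent lines and does not depend on the reference lines at all. Hence the claim does not fail for general $T$ when the reference lines are left untransported; transporting $u,v$ matters only for $\phi_{u,v}$, where $R_1=U$ and $R_2=V$.
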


\begin{proof}
An affine transformation $T$ multiplies any signed area by $|\det(T)|$.
Hence this factor cancels in all area ratios appearing in $CR_{\mathrm{area}}$,
which proves the invariance.
\end{proof}

\begin{theorem}\label{thm:CR-Lambda-property2}
If $R_1=U$ and $R_2=V$, then
\[
\frac{\sigma_{\Lambda}(L_1)}{\sigma_{\Lambda}(L_2)}
\]
is independent of $\Lambda$.
\end{theorem}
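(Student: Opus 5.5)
The plan is to reuse the normalization from the proof of Theorem~\ref{thm:CR-Lambda-property} and read off the dependence on $\Lambda$ directly. First we apply an affine transformation $F$ that sends $U$ to the $x$-axis and $V$ to the $y$-axis. By the affine invariance of signed-area ratios (the common factor $\det F$ cancels in each $\sigma_\Lambda$), we have $\sigma_{F(\Lambda)}(F(L))=\sigma_\Lambda(L)$. So it suffices to prove the claim in normalized coordinates. There an auxiliary line meeting both axes away from $O$ has the form $\Lambda: ax+by=1$ with $a,b\neq 0$. The earlier proof assumed $a,b>0$, and I will check that the same computation works for arbitrary nonzero signs.

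The key step is the identity $\sigma_\Lambda(L)=\frac{b}{a}\,m$ for a ray $L$ on the line $y=mx$. As in the proof of Theorem~\ref{thm:CR-Lambda-property}, we have $P_L=\bigl(\tfrac{1}{a+bm},\tfrac{m}{a+bm}\bigr)$, $[OP_LP_U]=\frac{m}{2a(a+bm)}$ and $[OP_LP_V]=\frac{1}{2b(a+bm)}$. The common factor $(a+bm)$ cancels, and this is valid for any signs of $a$ and $b$ as long as $a+bm\neq 0$, i.e.\ $L\not\parallel\Lambda$, which is already assumed since $P_L$ must exist. Hence, for two admissible rays $L_1,L_2$ with slopes $m_1,m_2$ (and $m_2\neq 0$, so that $L_2\neq U$),
\[
\frac{\sigma_\Lambda(L_1)}{\sigma_\Lambda(L_2)}=\frac{(b/a)\,m_1}{(b/a)\,m_2}=\frac{m_1}{m_2}.
\]
The right-hand side depends only on the rays, so the ratio is independent of $\Lambda$.

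Finally, I will connect this to the cross ratio to justify the hypothesis $R_1=U$, $R_2=V$. By the boundary conventions stated after Theorem~\ref{thm:CR-Lambda-property}, $\sigma_\Lambda(U)=0$ and $\sigma_\Lambda(V)=\infty$. Under either standard convention, $cr(x_1,x_2;0,\infty)$ reduces to $x_1/x_2$ or its reciprocal. So the statement says that $CR_{\mathrm{area}}(L_1,L_2;U,V)$ is the ratio $\sigma_\Lambda(L_1)/\sigma_\Lambda(L_2)$, and its $\Lambda$-independence is the special case of Theorem~\ref{thm:CR-Lambda-property} obtained by passing to the limit. The direct computation above avoids that limiting argument, which I expect to be the only delicate point. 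I would therefore present the direct computation as the proof and mention the cross-ratio interpretation as a remark. The remaining care is in the degenerate cases, $L_2=U$ (excluded) and $L_i\parallel V$ (giving ratio $\infty$ consistently for all $\Lambda$).
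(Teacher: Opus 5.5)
Your proof is correct, and it takes a more direct route than the paper. The paper obtains the statement as a corollary of Theorem~\ref{thm:CR-Lambda-property}. It sets $\sigma_\Lambda(U)=0$ and $\sigma_\Lambda(V)=\infty$, and uses $cr(x,y;0,\infty)=x/y$ to identify $\sigma_\Lambda(L_1)/\sigma_\Lambda(L_2)$ with $CR_{\mathrm{area}}(L_1,L_2;U,V)$. It then transfers the $\Lambda$-independence of the cross ratio to the quotient.

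You bypass the cross ratio. In normalized coordinates $\sigma_\Lambda(L)$ is a constant depending only on $\Lambda$ times the slope $m$, so you read off the explicit value $m_1/m_2$. Both arguments rest on the same computation, but yours gains three things:
\begin{itemize}
\item It never has to feed the values $0$ and $\infty$ into Theorem~\ref{thm:CR-Lambda-property}. That proof only handles finite $\sigma$-values and treats the boundary by an informal limiting remark.
\item It covers every admissible $\Lambda$ and every slope, whereas the paper assumes $a,b>0$ and $m>0$. The condition $a,b>0$ cannot hold simultaneously for, say, $\Lambda: x+y=1$ and $\Lambda': x-y=1$ under a single normalization. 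For that pair the scaling constant $c$ is negative, which is harmless but not covered by the paper's argument.
\item It produces the identity $\sigma_\Lambda(L_1)/\sigma_\Lambda(L_2)=m_1/m_2$, which the paper needs later anyway in Theorem~\ref{thm:affine-angle-CK-degeneration}.
\end{itemize}
The paper's route, in return, makes explicit why this quotient is an area cross ratio, which is what motivates the definition of $\phi_{u,v}$.

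There is one slip, inherited from the paper. With the stated convention $[XYZ]=\tfrac12(ps-qr)$ one gets $[OP_LP_U]=-\frac{m}{2a(a+bm)}$, so $\sigma_\Lambda(L)=-\frac{b}{a}m$. This is also what Lemma~\ref{lem:sign-sigma-Lambda} demands, since for $a,b,m>0$ the point $P_L$ lies between $P_U$ and $P_V$. The extra factor $-1$ is absorbed into the $\Lambda$-dependent constant and cancels in the quotient, so your conclusion stands. Similarly, in your degenerate-case remark, $L_2\parallel V$ gives the limiting quotient $0$, not $\infty$.
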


It is well known that the cross ratio is characterized
by the existence of a projective transformation sending any three points
to $\infty,0,1$ (see, for example, \cite{Kobayashi1990}).

\begin{proof}
Let $R_1=U$ and $R_2=V$.
By definition,
\[
CR_{\mathrm{area}}(L_1,L_2;U,V)
=
cr\bigl(\sigma_\Lambda(L_1),\sigma_\Lambda(L_2);
\sigma_\Lambda(U),\sigma_\Lambda(V)\bigr).
\]

Here $\sigma_\Lambda(U)=0$, and $\sigma_\Lambda(V)$ diverges.
Thus we regard $\sigma_\Lambda(V)=\infty$ in the extended real line
$\overline{\mathbb R}=\mathbb R\cup\{\infty\}$.

Using the identity
\[
cr(x,y;0,\infty)=\frac{x}{y},
\]
we obtain
\[
CR_{\mathrm{area}}(L_1,L_2;U,V)
=
\frac{\sigma_\Lambda(L_1)}{\sigma_\Lambda(L_2)}.
\]

On the other hand, by Theorem~\ref{thm:CR-Lambda-property},
the left-hand side is independent of $\Lambda$,
and hence so is the right-hand side.
The boundary cases $m\to0$ ($L\parallel U$) and
$m\to\infty$ ($L\parallel V$) are interpreted in the same limiting sense.
\end{proof}

From now on, when no confusion arises, we write
\[
\sigma(L):=\sigma_{\Lambda}(L).
\]

The set $\Lambda\setminus\{P_U,P_V\}$ splits into two connected components.
Fix one such component $\gamma$.
For all $L$ such that $P_L \in \gamma$, the sign of $\sigma(L)$ is constant.
Therefore, if $L_A$ and $L_B$ belong to the same component $\gamma$,
then $\sigma(L_A)$ and $\sigma(L_B)$ have the same sign, and hence
\[
\frac{\sigma(L_A)}{\sigma(L_B)}>0.
\]

Consequently, the area cross ratio
\[
CR_{\mathrm{area}}(L_A,L_B;U,V)
=
\frac{\sigma(L_A)}{\sigma(L_B)}
\]
is a positive real number.

Taking the logarithm, we obtain an additive quantity
\[
\log\frac{\sigma(L_A)}{\sigma(L_B)}.
\]
This quantity serves as the fundamental invariant,
and after normalization, defines the affine angle.

\begin{definition}[Affine Angle via the Area Cross Ratio]
\label{def:affine-angle-acr}
Let $L_A=OA$ and $L_B=OB$.
We define
\[
\phi_{u,v}(O;A,B)
:=
\frac{1}{2}\log\!\left(
CR_{\mathrm{area}}(L_A,L_B;U,V)
\right)
=
\frac{1}{2}\log\!\frac{\sigma(L_A)}{\sigma(L_B)}.
\]
We call this quantity the $(u,v)$-affine angle.
\end{definition}

\begin{remark}
The factor $1/2$ is introduced for two reasons.
First, it ensures consistency with the parabolic degeneration
of the Cayley--Klein angle.
Second, it simplifies the expressions of the corresponding isoptic curves.
\end{remark}

The $(u,v)$-affine angle $\phi_{u,v}(O;A,B)$
is defined on a generally disconnected domain.
Therefore, in what follows,
we restrict our attention to the case where
$P_{L_A}$ and $P_{L_B}$ belong to the same connected component of
$\Lambda\setminus\{P_U,P_V\}$,
and treat $\phi_{u,v}(O;A,B)$ as a real-valued angle only in this setting.

Under this restriction,
the angle is uniquely defined,
and satisfies antisymmetry and additivity as an oriented angle.
This means that the axiom A5 introduced in the next section
(the continuity and uniqueness of angle)
is naturally satisfied in the present framework.

\begin{proposition}[Reality of the $(u,v)$-affine angle]
\label{prop:real-uv-affine-angle}
Assume that $P_{L_A},P_{L_B}\notin\{P_U,P_V\}$.
Then:
\begin{itemize}
\item If $P_{L_A}$ and $P_{L_B}$ belong to the same connected component of
$\Lambda\setminus\{P_U,P_V\}$, then
\[
\phi_{u,v}(O;A,B)\in\mathbb R.
\]

\item If they belong to different connected components, then
\[
\phi_{u,v}(O;A,B)\notin\mathbb R.
\]
\end{itemize}
\end{proposition}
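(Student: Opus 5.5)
The plan is to reduce everything to the sign of the ratio $\sigma(L_A)/\sigma(L_B)$. By Theorem~\ref{thm:CR-Lambda-property2}, this ratio equals $CR_{\mathrm{area}}(L_A,L_B;U,V)$ and does not depend on $\Lambda$. The angle $\phi_{u,v}(O;A,B)$ is $\tfrac12$ of the logarithm of this ratio. So it is real exactly when the ratio is a positive real number, and it is non-real (when the logarithm is taken over $\mathbb C$) exactly when the ratio is a negative real number.

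The hypothesis $P_{L_A},P_{L_B}\notin\{P_U,P_V\}$ guarantees that both $\sigma(L_A)$ and $\sigma(L_B)$ are finite and nonzero. Indeed, $[OP_LP_U]=0$ only when $P_L=P_U$, and $[OP_LP_V]=0$ only when $P_L=P_V$, because $O\notin\Lambda$. Hence the ratio is a well-defined nonzero real number.

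Next I invoke Lemma~\ref{lem:sign-sigma-Lambda}. The line $\Lambda$ minus the two points $P_U,P_V$ has three connected components: the open segment $(P_U,P_V)$ and two unbounded rays. On the segment $\sigma<0$, and on each ray $\sigma>0$.

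The case split then follows.

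\textbf{Same component.} If $P_{L_A}$ and $P_{L_B}$ lie in the same component, then $\sigma(L_A)$ and $\sigma(L_B)$ have the same sign. The ratio is therefore positive, and its logarithm is real.

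\textbf{Different components.} This is the main obstacle. The statement as literally phrased is false when $P_{L_A}$ and $P_{L_B}$ lie on the two different unbounded rays: both $\sigma$ values are positive, so the angle is real. The remark before the definition even speaks of "two connected components", which is inconsistent with the lemma's three.

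I would resolve this in one of two ways:
\begin{itemize}
\item interpret "connected components" as the two sign classes, namely $(P_U,P_V)$ versus $\Lambda\setminus[P_U,P_V]$, which projectively is a single arc through the point at infinity of $\Lambda$;
\item or reformulate the claim via the normalization in the proof of Theorem~\ref{thm:CR-Lambda-property}, where $\sigma(L)=\tfrac{b}{a}m$ with $m$ the slope relative to $U,V$ as axes, so the components correspond to the sign of $m$.
\end{itemize}

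With either reading, different components means opposite signs of $\sigma$. The ratio is then negative, and
\[
\log\frac{\sigma(L_A)}{\sigma(L_B)}=\log\left|\frac{\sigma(L_A)}{\sigma(L_B)}\right|+i\pi(2k+1).
\]
This quantity is never real for any branch $k$, so $\phi_{u,v}(O;A,B)\notin\mathbb R$.

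Finally, I would note that the conclusion does not depend on $\Lambda$, even though the component decomposition does. By Theorem~\ref{thm:CR-Lambda-property2}, changing $\Lambda$ multiplies every $\sigma$ by the same positive constant $c$ (with $a,b>0$ after orienting $\Lambda$ as in that proof). So the sign comparison, and hence the dichotomy, is intrinsic.
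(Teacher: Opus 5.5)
Your argument is the paper's own: Lemma~\ref{lem:sign-sigma-Lambda} gives equal signs of $\sigma(L_A),\sigma(L_B)$ in the first case, hence a positive ratio and a real logarithm, and opposite signs in the second, hence a negative ratio and a logarithm whose imaginary part is an odd multiple of $\pi$. You are also right that the paper's proof simply asserts $\sigma(L_A)/\sigma(L_B)<0$ for ``different components'', which fails when $P_{L_A},P_{L_B}$ lie on the two unbounded rays of $\Lambda\setminus\{P_U,P_V\}$, so your reading of the components as the two sign classes (the two arcs of the projective line $\Lambda$ minus $P_U,P_V$) is needed; your only slip is calling the constant relating $\sigma_\Lambda$ and $\sigma_{\Lambda'}$ positive, since it can be negative (e.g.\ $\Lambda: x+y=1$ versus $\Lambda': x-y=1$ in the normalized coordinates gives $c=-1$), but this is harmless because a common nonzero factor preserves the same-sign/opposite-sign comparison.
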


\begin{proof}
By Lemma~\ref{lem:sign-sigma-Lambda},
if $P_{L_A}$ and $P_{L_B}$ belong to the same connected component,
then $\sigma(L_A)$ and $\sigma(L_B)$ have the same sign,
and hence
\[
\frac{\sigma(L_A)}{\sigma(L_B)}>0.
\]
Therefore the logarithm is well defined as a real number,
and $\phi_{u,v}(O;A,B)\in\mathbb R$.

On the other hand, if $P_{L_A}$ and $P_{L_B}$
belong to different components,
then $\frac{\sigma(L_A)}{\sigma(L_B)}<0$,
and thus $\phi_{u,v}(O;A,B)$ is not real-valued.
\end{proof}

\begin{remark}
Thus, in order to treat the $(u,v)$-affine angle as a real-valued quantity,
it suffices to restrict to a single connected component of
$\Lambda\setminus\{P_U,P_V\}$.
Under this restriction, the argument of the logarithm is always positive,
and the angle is uniquely defined as a real number.
\end{remark}

\section{Validity of the Affine Angle}

The $(u,v)$-affine angle $\phi_{u,v}(O;A,B)$
can be regarded as a concrete realization of a parabolic degeneration.
In this section, we verify directly that the quantity
$\phi_{u,v}(O;A,B)$ defined above satisfies
the axioms of angle (A1--A5) introduced in Nakazato~\cite{Nakazato2025Base1}.
This establishes that the present angle is well-defined.
For the axioms themselves, we refer to \cite{Nakazato2025Base1}.

\begin{definition}[Domain and Singular Set]
Fix a base point $P=O$ and choose two non-parallel reference directions $u,v$,
represented by rays $U,V\in\mathcal R_P$.
For each ray $L\in\mathcal R_P$, define
\[
\sigma(L):=\frac{[OP_LP_U]}{[OP_LP_V]},
\]
where the sign is taken consistently with the orientation from $U$ to $V$.

Define the singular set
\[
S_P:=\{L \mid L\parallel U \text{ or } L\parallel V\},
\]
and let $D_P:=\mathcal R_P\setminus S_P$ be the domain.

On each connected component of $D_P$, we have $\sigma(L)>0$,
and we define the affine angle by
\[
\phi_{u,v}(O;A,B)
:=
\frac{1}{2}\log\!\frac{\sigma(L_A)}{\sigma(L_B)}.
\]
The values lie in $\mathbb{R}$.
\end{definition}

\subsection{Verification of A1--A4}

We now verify that the affine angle satisfies A1--A4.
We begin with the following lemma, which is needed for A3.

\begin{lemma}[Injectivity on each component]\label{lem:sigma_injective}
On each connected component $\gamma$ of $D_P$,
the map $L\mapsto\sigma(L)$ is continuous and strictly monotone.
In particular, it is injective on $\gamma$.
\end{lemma}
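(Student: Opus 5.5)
The plan is to reduce to the normalized coordinates used in the proof of Theorem~\ref{thm:CR-Lambda-property}. There $\sigma$ becomes an explicit function of a single real parameter, and continuity and monotonicity can be read off directly.

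First, apply an affine transformation $F$ sending $U$ to the $x$-axis and $V$ to the $y$-axis. Signed areas are multiplied by the common factor $\det F$, so every ratio $\sigma_\Lambda(L)$ is unchanged. Rays are sent to rays and the components of $D_P$ are sent to components, so continuity and strict monotonicity are preserved. By Theorem~\ref{thm:CR-Lambda-property2}, changing $\Lambda$ multiplies $\sigma$ by a constant $c\neq 0$, which preserves both properties. We may therefore fix $\Lambda$ to be $ax+by=1$ with $a,b>0$.

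Next, parametrize the rays. A ray $L$ from $O$ not parallel to $U$ or $V$ lies on a line $y=mx$ with $m\in\mathbb R\setminus\{0\}$. The slope $m$ is a continuous coordinate on each connected component of $D_P$, whether $D_P$ is viewed as a set of directions (an arc of the circle of directions minus the four singular rays) or via the point $P_L\in\Lambda$. The computation in the proof of Theorem~\ref{thm:CR-Lambda-property} extends verbatim to $m<0$, provided $a+bm\neq 0$, that is, $L\not\parallel\Lambda$. It gives
\[
\sigma(L)=\frac{b}{a}\,m .
\]
Two cases need care: the rays with $a+bm=0$, and rays pointing away from $\Lambda$. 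To avoid them, I would compute $\sigma$ directly from a direction vector $w=(\cos t,\sin t)$ of $L$. The quantity $[OP_LP_U]/[OP_LP_V]$ depends only on the line through $O$, and the same formula results, namely $\sigma=\frac{b}{a}\tan t$.

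On a component $\gamma$, the angle $t$ ranges over an open interval of length $\pi/2$ avoiding the multiples of $\pi/2$. There $\tan t$ is continuous and strictly increasing, so $\sigma$ is continuous and strictly monotone, increasing or decreasing according to the sign of $b/a$ and the chosen orientation. Injectivity follows immediately.

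The main obstacle is the bookkeeping in the parametrization step. We must ensure that $\sigma$ is genuinely a function on rays (not merely lines), that the sign convention "consistent with the orientation from $U$ to $V$" matches the formula, and that each connected component of $D_P$ corresponds exactly to an interval of $t$ on which $\tan t$ has no pole. I would handle this by working with $t$ throughout rather than with $P_L$. This also avoids the degenerate case $L\parallel\Lambda$, which the $P_L$-based definition otherwise leaves undefined; it is settled by continuity of $\tan t$ at the corresponding $t$.
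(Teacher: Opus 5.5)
Your proposal is correct and follows essentially the same route as the paper. Both normalize $U,V$ to the coordinate axes and $\Lambda$ to $ax+by=1$ with $a,b>0$, obtain $\sigma(L)=\frac{b}{a}m$, and read off continuity and strict monotonicity from the fact that each component of $D_P$ corresponds to an interval of slopes of fixed sign. Using the angle parameter $t$ (so $\sigma=\frac{b}{a}\tan t$) instead of $m$ is only cosmetic, though it does usefully cover rays parallel to $\Lambda$ and rays that never meet $\Lambda$, which the paper's proof passes over silently.
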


\begin{proof}
As in the previous section, we normalize the configuration
by a suitable affine transformation so that $P=O$,
$U$ is the $x$-axis, and $V$ is the $y$-axis.
Let $\Lambda$ be given by
\[
\Lambda:\ ax+by=1 \qquad (a>0,\ b>0).
\]

Let $L$ be a ray through $O$ with $L\not\parallel U,V$,
and write its slope as $m=\mathrm{slope}(L)\in\mathbb{R}\setminus\{0\}$.
Then, from the previous computation,
\[
\sigma_\Lambda(L)=\frac{b}{a}\,m.
\]

Each connected component $\gamma$ of $D_P=\mathcal R_P\setminus S_P$
is identified with either $(0,\infty)$ or $(-\infty,0)$
via the slope parameter.
Hence $m$ varies continuously with fixed sign on $\gamma$.

Since $\frac{b}{a}>0$ is constant,
the map $L\mapsto\sigma_\Lambda(L)$ is continuous
and strictly monotone with respect to $m$.
Therefore it is injective on $\gamma$.
\end{proof}

\begin{proposition}[Satisfaction of A1--A4]
The affine angle $\phi_{u,v}$ defined above satisfies A1--A4.
\end{proposition}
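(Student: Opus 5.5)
The plan is to reduce everything to the normal form already used in Theorem~\ref{thm:CR-Lambda-property} and Lemma~\ref{lem:sigma_injective}. By an affine map preserving the directions $u,v$ we take $P=O$, make $U$ the $x$-axis and $V$ the $y$-axis, and fix $\Lambda: ax+by=1$ with $a,b>0$. For every ray $L$ in a fixed component $\gamma$ of $D_P$ we then have $\sigma(L)=\frac{b}{a}m_L$, where $m_L$ is the slope. Since the sign of $m_L$ is constant on $\gamma$,
\[
\phi_{u,v}(O;A,B)=\tfrac12\log\frac{m_{A}}{m_{B}}=\tfrac12\bigl(\log|m_A|-\log|m_B|\bigr).
\]
Thus $\phi$ is the difference $f(L_A)-f(L_B)$ of the single function $f(L):=\tfrac12\log|m_L|$ on $\gamma$. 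Theorem~\ref{thm:CR-Lambda-property2} guarantees that this does not depend on $\Lambda$. Each of A1--A4 should then become a one-line property of differences of a function.

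I will check the axioms in the order used in \cite{Nakazato2025Base1}.

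\textbf{Vanishing.} We have $\phi(O;A,A)=\frac12\log 1=0$. More generally, $\phi(O;A,B)=0$ exactly when $L_A=L_B$; the reverse implication uses injectivity from Lemma~\ref{lem:sigma_injective}.

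\textbf{Antisymmetry.} $\phi(O;B,A)=\frac12\log\frac{\sigma(L_B)}{\sigma(L_A)}=-\phi(O;A,B)$.

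\textbf{Additivity (Chasles relation).} For three rays $L_A,L_B,L_C$ in the same component,
\[
\phi(O;A,B)+\phi(O;B,C)=\tfrac12\log\Bigl(\frac{\sigma(L_A)}{\sigma(L_B)}\cdot\frac{\sigma(L_B)}{\sigma(L_C)}\Bigr)=\phi(O;A,C).
\]

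\textbf{Uniqueness of the ray at a given angle (the A3-type axiom).} Given $L_B\in\gamma$ and $\theta\in\mathbb R$, we need a unique $L_A\in\gamma$ with $\phi(O;A,B)=\theta$. The condition is equivalent to $\sigma(L_A)=e^{2\theta}\sigma(L_B)$. Existence follows because $m\mapsto\frac{b}{a}m$ maps $\gamma\cong(0,\infty)$ or $(-\infty,0)$ onto the same half-line, and $e^{2\theta}\sigma(L_B)$ lies in that half-line. Uniqueness follows from Lemma~\ref{lem:sigma_injective}.

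\textbf{Dependence on rays only, and invariance.} Replacing $A$ by another point of the ray $OA$ leaves $L_A$, and hence $\phi$, unchanged. Affine maps preserving $u,v$ act on slopes by $m\mapsto\lambda m$ with $\lambda>0$ on the component (up to swapping components), so the ratio $m_A/m_B$ is preserved. This also follows from the affine invariance of $CR_{\mathrm{area}}$, together with the fact that such maps fix $U$ and $V$.

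The step I expect to need the most care is the A3-type surjectivity/uniqueness. The issue is that the image of $\gamma$ under $\sigma$ must be the whole half-line $(0,\infty)$ or $(-\infty,0)$, so that every real $\theta$ is attained. I must also check that the normalization can be chosen with $a,b>0$ without flipping the sign conventions between components. My first approach is to handle the two components separately, using the reflection $x\mapsto -x$ if necessary. That reflection preserves the directions $u,v$ and swaps the components, so the negative-slope case reduces to the positive one.
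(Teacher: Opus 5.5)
Your proof is correct and follows essentially the same route as the paper: each axiom is checked directly from $\phi_{u,v}(O;A,B)=\tfrac12\log\bigl(\sigma(L_A)/\sigma(L_B)\bigr)$. Vanishing is reduced to $\sigma(L_A)=\sigma(L_B)$ and then to $L_A=L_B$ via Lemma~\ref{lem:sigma_injective}, and scaling invariance holds because $\phi$ depends only on the rays. Note that in the paper A3 is the vanishing axiom, which you already prove, and A4 is scaling invariance, so your extra existence/uniqueness step for a ray at a prescribed angle is not needed for A1--A4 (it is closer in spirit to the midpoint part of A5).
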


\begin{proof}
\textbf{A1 (antisymmetry):}
\[
\phi_{u,v}(O;A,B)
=
\frac{1}{2}\log\frac{\sigma(L_A)}{\sigma(L_B)}
=
-\frac{1}{2}\log\frac{\sigma(L_B)}{\sigma(L_A)}
=
-\phi_{u,v}(O;B,A).
\]

\smallskip
\textbf{A2 (additivity):}
\[
\phi_{u,v}(O;A,C)
=
\frac{1}{2}\log\frac{\sigma(L_A)}{\sigma(L_C)}
=
\frac{1}{2}\log\frac{\sigma(L_A)}{\sigma(L_B)}
+
\frac{1}{2}\log\frac{\sigma(L_B)}{\sigma(L_C)}
=
\phi_{u,v}(O;A,B)+\phi_{u,v}(O;B,C).
\]

\smallskip
\textbf{A3 (vanishing):}
\[
\phi_{u,v}(O;A,B)=0 \iff \sigma(L_A)=\sigma(L_B).
\]
Since $L_A$ and $L_B$ lie in the same connected component,
injectivity of $\sigma$ (Lemma~\ref{lem:sigma_injective})
implies $L_A=L_B$.
Thus $OA$ and $OB$ coincide, and hence $O,A,B$ are collinear
(with the order condition as in A3).

\smallskip
\textbf{A4 (scaling invariance):}
If $A'\in\overrightarrow{OA}$ and $B'\in\overrightarrow{OB}$,
then
\[
L_{A'}=L_A,\qquad L_{B'}=L_B.
\]
Hence
\[
\sigma(L_{A'})=\sigma(L_A),\qquad
\sigma(L_{B'})=\sigma(L_B),
\]
and therefore
\[
\phi_{u,v}(O;A',B')=\phi_{u,v}(O;A,B).
\]
\end{proof}

\subsection{Verification of A5 and Interpretation as a Cayley--Klein Angle}

Let $r,s,t\in\mathcal R_P$ be rays with common vertex $P$.

\begin{lemma}[A5(i): existence and uniqueness of the midpoint subdivision]
For any connected component $\gamma$ of $D_P$ and any $r,s\in \gamma$,
there exists a unique $t\in \gamma$ such that
\[
\phi_{u,v}(O;r,t)=\phi_{u,v}(O;t,s)=\frac{1}{2}\,\phi_{u,v}(O;r,s).
\]
\end{lemma}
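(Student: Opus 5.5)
The plan is to reduce everything to the slope parameter, exactly as in Lemma~\ref{lem:sigma_injective}. First I normalize by an affine transformation so that $P=O$, $U$ is the $x$-axis and $V$ is the $y$-axis. By the affine invariance of $CR_{\mathrm{area}}$, $\phi_{u,v}$ is unchanged. Choose $\Lambda: ax+by=1$ with $a,b>0$. Then, by the computation in the proof of Theorem~\ref{thm:CR-Lambda-property}, $\sigma(L)=\frac{b}{a}m_L$, where $m_L$ is the slope of $L$. Hence
\[
\phi_{u,v}(O;r,s)=\tfrac12\log\frac{m_r}{m_s},
\]
and the factor $b/a$ cancels. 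The component $\gamma$ corresponds to the slopes in either $(0,\infty)$ or $(-\infty,0)$, and $m_r,m_s$ share a sign.

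\textbf{Existence.} Set $m_t:=\operatorname{sgn}(m_r)\sqrt{m_rm_s}$, the signed geometric mean. Then $m_t$ is nonzero, finite, and has the common sign, so the corresponding ray $t$ through $O$ lies in $\gamma$. It is not parallel to $U$ or $V$. Among the two opposite rays with this slope, I take the one in $\gamma$. This is where one must check that $\gamma$, viewed as a set of rays, is identified with a slope interval in the way stated in Lemma~\ref{lem:sigma_injective}, and I will rely on that identification as given. A direct computation then gives
\[
\frac{m_r}{m_t}=\frac{m_t}{m_s}=\sqrt{\frac{m_r}{m_s}},
\]
so $\phi_{u,v}(O;r,t)=\phi_{u,v}(O;t,s)=\tfrac14\log(m_r/m_s)=\tfrac12\phi_{u,v}(O;r,s)$. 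Alternatively, additivity (A2) gives the sum of the two parts, so it suffices to prove the two parts are equal.

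\textbf{Uniqueness.} Suppose $t'\in\gamma$ satisfies $\phi_{u,v}(O;r,t')=\tfrac12\phi_{u,v}(O;r,s)$. Then $\sigma(t')=\sigma(r)\,\bigl(\sigma(s)/\sigma(r)\bigr)^{1/2}=\sigma(t)$, because $\log$ is injective on $(0,\infty)$. By the injectivity of $\sigma$ on $\gamma$ (Lemma~\ref{lem:sigma_injective}), $t'=t$.

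The only genuinely delicate point is the bookkeeping of rays versus lines in the identification of $\gamma$ with a slope interval. Everything else is a one-line computation.
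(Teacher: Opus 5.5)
Your proof is correct and follows the paper's argument. In both, the midpoint condition reduces to $\sigma(t)$ being the geometric mean of $\sigma(r)$ and $\sigma(s)$, and uniqueness comes from injectivity of $\sigma$ on $\gamma$; you write down $t$ explicitly through slopes, whereas the paper obtains it from the intermediate value theorem. Your signed choice $m_t=\mathrm{sgn}(m_r)\sqrt{m_rm_s}$ is slightly more careful than the paper's $\sigma(t)=\sqrt{\sigma(r)\sigma(s)}$, which assumes $\sigma>0$ on $\gamma$ and so needs a minus sign on the negative-slope component, where $\sigma=\frac{b}{a}m<0$.
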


\begin{proof}
Since $\sigma(r)>0$,  $\sigma(s)>0$,  $\sigma(t)>0$ on $\gamma$, we have
\[
\phi_{u,v}(O;r,t)=\frac{1}{2}\,\phi_{u,v}(O;r,s)
\iff
\log\!\frac{\sigma(r)}{\sigma(t)}
=\frac{1}{2}\log\!\frac{\sigma(r)}{\sigma(s)}
\iff
\sigma(t)=\sqrt{\sigma(r)\,\sigma(s)}.
\]

On $\gamma$, the map $L\mapsto\sigma(L)$ is continuous and strictly monotone.
Hence, by the intermediate value theorem,
there exists a $t$ such that $\sigma(t)=\sqrt{\sigma(r)\sigma(s)}$,
and by monotonicity it is unique.
\end{proof}

\begin{lemma}[A5(ii): continuity]
For any fixed $r\in\gamma$, the map $s\mapsto\phi_{u,v}(O;r,s)$
is continuous on $\gamma$.
\end{lemma}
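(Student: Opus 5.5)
The plan is to reduce the statement to the continuity of $\sigma$ on $\gamma$ and of the logarithm, both of which are already available. By definition,
\[
\phi_{u,v}(O;r,s)=\frac{1}{2}\log\frac{\sigma(r)}{\sigma(s)}
=\frac{1}{2}\log\sigma(r)-\frac{1}{2}\log\sigma(s),
\]
where the splitting is legitimate because $\sigma(r)$ and $\sigma(s)$ have the same sign on $\gamma$. If $\gamma$ is the component on which $\sigma<0$, I would first rewrite the quotient as $|\sigma(r)|/|\sigma(s)|$ before taking logarithms. For fixed $r$, the first term is a constant, so continuity of $s\mapsto\phi_{u,v}(O;r,s)$ reduces to continuity of $s\mapsto \log|\sigma(s)|$ on $\gamma$.

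Next I will invoke Lemma~\ref{lem:sigma_injective}, which states that $L\mapsto\sigma(L)$ is continuous on $\gamma$. In the normalized coordinates of that lemma, with $U$ the $x$-axis, $V$ the $y$-axis and $\Lambda: ax+by=1$, we have $\sigma(L)=\frac{b}{a}m$. Here $m$ is the slope, and it parametrizes $\gamma$ homeomorphically as either $(0,\infty)$ or $(-\infty,0)$. Consequently $|\sigma(s)|=\frac{b}{a}|m|$ is continuous and bounded away from $0$ on every compact subset of $\gamma$. Composing with $\log$, which is continuous on $(0,\infty)$, gives continuity of $s\mapsto\log|\sigma(s)|$. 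Multiplying by $-\tfrac12$ and adding the constant term then gives the claim.

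The only point needing care is the topology on $\gamma$. I will take it to be the one induced by the direction (angle) of rays, or equivalently the one transported by the slope parameter. The slope map is a homeomorphism away from the vertical direction, which is the direction of $V$ after normalization, and that direction is excluded from $D_P$; hence this identification is valid on all of $\gamma$. I would also note that affine invariance (the earlier theorem on $CR_{\mathrm{area}}$) justifies working in the normalized coordinates. In those coordinates the conclusion takes the explicit form $\phi_{u,v}(O;r,s)=\frac12\log\frac{m_r}{m_s}$, which is visibly continuous in $m_s$ on $\gamma$. I expect no real obstacle beyond this identification of topologies.
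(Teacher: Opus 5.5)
Your proof is correct and follows the paper's argument: write $\phi_{u,v}(O;r,s)=\tfrac12\log\sigma(r)-\tfrac12\log\sigma(s)$, then compose the continuity of $\sigma$ on $\gamma$ (Lemma~\ref{lem:sigma_injective}) with the continuity of $\log$. Passing to $|\sigma|$ on the component where $\sigma<0$ is a worthwhile refinement, because the paper simply asserts $\sigma>0$ on every component, which its own formula $\sigma=\tfrac{b}{a}m$ does not support when $m<0$.
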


\begin{proof}
We have
\[
\phi_{u,v}(O;r,s)
=
\frac{1}{2}\log\frac{\sigma(r)}{\sigma(s)}
=
\frac{1}{2}\bigl(\log\sigma(r)-\log\sigma(s)\bigr).
\]
Since $s\mapsto\sigma(s)$ is continuous and positive on $\gamma$,
the result follows by composition.
\end{proof}

\begin{remark}[Monotonicity and boundary behavior]
As $L$ varies continuously along $\gamma$,
the quantity $\sigma(L)$ varies monotonically
($\sigma\to 0^+$ as $L\parallel U$,
and $\sigma\to +\infty$ as $L\parallel V$).
Consequently,
\[
\phi_{u,v}(O;r,s)=\frac{1}{2}\log\frac{\sigma(r)}{\sigma(s)}
\]
also varies continuously and monotonically with respect to $s$.

At the boundary $S_P$, the angle diverges
($\phi\to \mp\infty$),
which is consistent with the present framework
where the domain is restricted to $D_P$.
\end{remark}

\begin{proposition}[Affine invariance (compatibility with transformations)]
For any affine transformation $T$, we have
\[
\phi_{u,v}(O;A,B)
=
\phi_{T(u),T(v)}\bigl(T(O);T(A),T(B)\bigr).
\]
\end{proposition}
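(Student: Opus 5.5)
The plan is to reduce the statement to the affine invariance of signed-area ratios, which the paper has already used for $CR_{\mathrm{area}}$. Let $T(x)=Mx+t$ with $\det M\neq 0$, and write $T(u),T(v)$ for the directions $Mu,Mv$. Then $T(U)$ and $T(V)$ are the lines through $T(O)$ with directions $T(u)$ and $T(v)$. If $\Lambda$ is an auxiliary line meeting $U$ and $V$, then $T(\Lambda)$ is a line meeting $T(U)$ and $T(V)$. Since $T$ maps lines to lines and preserves incidence, we have
\[
T(P_L)=T(L)\cap T(\Lambda)=P_{T(L)},
\]
and likewise $T(P_U)=P_{T(U)}$ and $T(P_V)=P_{T(V)}$. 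Rays from $O$ through $A,B$ are sent to rays from $T(O)$ through $T(A),T(B)$, so $T(L_A)=L_{T(A)}$ and $T(L_B)=L_{T(B)}$.

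The key computation is the behaviour of signed areas. For any triangle,
\[
[T(X)T(Y)T(Z)]=\det(M)\,[XYZ].
\]
This holds because the edge vectors are mapped by $M$, and the $2\times2$ determinant in the definition of $[XYZ]$ is multiplied by $\det M$. The factor is the signed determinant, not $|\det M|$. It is nevertheless the same factor in numerator and denominator, so
\[
\sigma_{T(\Lambda)}(T(L))=\frac{\det(M)[OP_LP_U]}{\det(M)[OP_LP_V]}=\sigma_\Lambda(L).
\]
Consequently
\[
\frac{\sigma_{T(\Lambda)}(T(L_A))}{\sigma_{T(\Lambda)}(T(L_B))}=\frac{\sigma_\Lambda(L_A)}{\sigma_\Lambda(L_B)}.
\]
By Theorem~\ref{thm:CR-Lambda-property2}, the left side computes $CR_{\mathrm{area}}(L_{T(A)},L_{T(B)};T(U),T(V))$ for any admissible auxiliary line, and in particular for $T(\Lambda)$. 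Taking $\tfrac12\log$ gives the claimed equality.

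The step needing most care is the domain, and it is the main obstacle. We must check that the right-hand side is real exactly when the left-hand side is, so that the angle is defined there. Since $T$ restricted to $\Lambda$ is an affine bijection onto $T(\Lambda)$, it maps $P_U,P_V$ to $P_{T(U)},P_{T(V)}$. It also preserves betweenness, so it maps connected components of $\Lambda\setminus\{P_U,P_V\}$ onto those of $T(\Lambda)\setminus\{P_{T(U)},P_{T(V)}\}$. Hence $P_{L_A},P_{L_B}$ lie in a common component if and only if their images do. Equality of the $\sigma$ values already forces the signs to agree, consistent with Lemma~\ref{lem:sign-sigma-Lambda}.

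The boundary cases $L\parallel U$ or $L\parallel V$ are preserved because $T$ preserves parallelism, so these singular rays correspond. I would also note that the order of the pair $(T(u),T(v))$ is kept. Orientation reversal ($\det M<0$) causes no sign problem because the factor $\det M$ cancels.
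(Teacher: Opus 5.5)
Your proposal is correct and is essentially the paper's proof: under $T(x)=Mx+t$ every signed area is multiplied by the same factor $\det M$, so $\sigma_{T(\Lambda)}(T(L))=\sigma_\Lambda(L)$, and therefore the ratio $\sigma(L_A)/\sigma(L_B)$ and $\phi_{u,v}$ are unchanged. You also check that $T(P_L)=P_{T(L)}$ and that $T$ carries the components of $\Lambda\setminus\{P_U,P_V\}$ onto those of $T(\Lambda)\setminus\{P_{T(U)},P_{T(V)}\}$, so the real-valued domain is preserved; these are sound details the paper leaves implicit.
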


\begin{proof}
An affine transformation $T(x)=Ax+b$ multiplies any signed area by $\det(A)$.
Hence, when the auxiliary line $\Lambda$ is replaced by $T(\Lambda)$,
both the numerator and denominator in
\[
\sigma_\Lambda(L)
=
\frac{[OP_LP_U]}{[OP_LP_V]}
\]
are multiplied by the same factor $\det(A)$.
Therefore,
\[
\sigma_{T(\Lambda)}\bigl(T(L)\bigr)=\sigma_\Lambda(L).
\]

It follows that the area cross ratio $CR_{\mathrm{area}}$ is invariant,
and hence the affine angle $\phi$, defined as its logarithm, is also invariant.
This proves the claim.
\end{proof}

\begin{theorem}[Affine angle as a parabolic degeneration of the Cayley--Klein angle]
\label{thm:affine-angle-CK-degeneration}

In Cayley--Klein geometry, angles are determined by an absolute conic
$Q=0$ in the projective plane.

When this conic degenerates into a product of two linear factors,
\[
Q=(l_1x+m_1y+n_1z)(l_2x+m_2y+n_2z),
\]
the absolute corresponds to two directions
$U_\infty$ and $V_\infty$ on the line at infinity.

In this case, for two rays $OA$ and $OB$ with vertex $O$,
the Cayley--Klein angle is given by
\[
\theta(A,B)
=
\frac{1}{2}\log cr(OA,OB;U_\infty,V_\infty).
\]
\end{theorem}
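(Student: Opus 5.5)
The plan is to start from the classical Cayley--Klein (Laguerre) formula for the angle between two lines $a,b$ through $O$. This angle is $\frac{1}{2i}\log cr(a,b;t_1,t_2)$, where $t_1,t_2$ are the two tangents from $O$ to the absolute conic. More generally, up to the normalizing constant $c$, it is $c\log cr(a,b;t_1,t_2)$. We then track what happens when $Q$ splits into two lines $\ell_1\ell_2$.

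\textbf{Step 1: the degenerate tangents.} For a degenerate conic $Q=\ell_1\ell_2$, the pencil of lines through $O$ meets $Q$ in the pair of points $\ell_1\cap\ell$ and $\ell_2\cap\ell$. The ``tangent'' lines from $O$ are the lines for which these two intersections coincide, or, taking the limit of tangents from a nondegenerate family $Q_\varepsilon\to Q$, the lines through $O$ and the singular points picked out on $\ell_\infty$. In the setting of the theorem, the absolute is read off on $\ell_\infty$ as the two points $U_\infty,V_\infty$. The dual picture is cleaner: the absolute for angles is a degenerate dual conic, namely the point pair $\{U_\infty,V_\infty\}$. The lines $OU_\infty$ and $OV_\infty$, i.e.\ $U$ and $V$, then play the role of the two tangents. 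This is the step I expect to be the main obstacle, because one must justify which limit of the tangents is meant. I would parametrize $Q_\varepsilon$ (a real hyperbola-type conic whose asymptotic points tend to $U_\infty,V_\infty$) and check that its tangents from $O$ converge to $U,V$. In this real-tangent (hyperbolic-type) case the factor $1/i$ disappears, and the constant becomes $\frac12$, matching the normalization fixed in the Remark after Definition~\ref{def:affine-angle-acr}.

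\textbf{Step 2: reduce to a cross ratio on $\ell_\infty$.} The cross ratio of four concurrent lines equals the cross ratio of their intersection points with any transversal. Taking the transversal $\ell_\infty$ gives $cr(OA,OB;U,V)=cr(A_\infty,B_\infty;U_\infty,V_\infty)$, which is the formula in the statement.

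\textbf{Step 3: identify it with $\phi_{u,v}$.} I normalize as in the proof of Theorem~\ref{thm:CR-Lambda-property}, with $U$ the $x$-axis and $V$ the $y$-axis. Points of $\ell_\infty$ are then parametrized by slope $m$, with $U_\infty\leftrightarrow 0$ and $V_\infty\leftrightarrow\infty$. Using $cr(x,y;0,\infty)=x/y$ as in Theorem~\ref{thm:CR-Lambda-property2}, we get $cr(OA,OB;U_\infty,V_\infty)=m_A/m_B=\sigma(L_A)/\sigma(L_B)$, since $\sigma_\Lambda(L)=\frac{b}{a}m$. Hence $\theta(A,B)=\frac12\log CR_{\mathrm{area}}(L_A,L_B;U,V)=\phi_{u,v}(O;A,B)$. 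This exhibits the affine angle as the degenerate Cayley--Klein angle, and it is real exactly on the components described in Proposition~\ref{prop:real-uv-affine-angle}.
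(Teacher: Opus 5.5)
Your Steps~2 and~3 are exactly the paper's proof. The paper normalizes $U,V$ to the coordinate axes, uses $\sigma(L)=c\,m$, identifies the cross ratio of the four directions on $\ell_\infty$ with the cross ratio of slopes, and concludes $cr(OA,OB;U_\infty,V_\infty)=m_A/m_B=\sigma(L_A)/\sigma(L_B)$. The paper never derives the degenerate Cayley--Klein formula itself; it simply takes $\frac12\log cr(OA,OB;U_\infty,V_\infty)$ as the angle attached to the split absolute. Your Step~1 is therefore an addition, and it is the one part of your plan that is not sound as written.

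\textbf{The point-conic reading fails.} If $Q=\ell_1\ell_2$ is read literally as a point conic (a line pair), both of your first two descriptions in Step~1 give the wrong lines. The lines through $O$ meeting $\ell_1$ and $\ell_2$ in the same point reduce to the single line joining $O$ to the vertex $\ell_1\cap\ell_2$. Likewise, tangents from $O$ to point conics converging to the line pair collapse onto that same line, because the dual conics converge to the doubled vertex. Your proposed check does not rule this out. The family $xy=\varepsilon$, $\varepsilon\to0$, consists of hyperbolas whose asymptotic points are exactly $U_\infty,V_\infty$. Yet for $O=(x_0,y_0)$ with $x_0y_0<0$, both tangents have slopes tending to $y_0/x_0$. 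The cross ratio then tends to $1$ and the logarithmic formula degenerates; only a rescaled first-order limit survives, as in Appendix~B.

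\textbf{The dual reading is what you need.} What makes the statement true is the dual reading you mention: the two linear factors are in line coordinates. The absolute envelope is then the set of lines through $U_\infty$ or $V_\infty$, and the lines of this envelope through $O$ are by definition $OU_\infty=U$ and $OV_\infty=V$, so no limit is needed. If you want a limiting justification, use $xy=K$ with $K\to\infty$ instead. These point conics tend to the doubled line $z^2=0$. Their dual conics $4\xi\eta-\zeta^2/K=0$ (for lines $\xi x+\eta y+\zeta z=0$) tend to the point pair $\xi\eta=0$. The two tangents from $O$ have slopes asymptotic to $-y_0^2/(4K)\to0$ and $-4K/x_0^2\to-\infty$, so they converge to $U$ and $V$, and continuity of the cross ratio gives the formula.

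Note also that the constant $\frac12$, replacing $\frac1{2i}$, is a normalization you choose so that the real-tangent case is real-valued; the degeneration does not force it.
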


\begin{remark}
This angle coincides with the $(u,v)$-affine angle defined in
\cref{def:affine-angle-acr}:
\[
\phi_{u,v}(O;A,B)
=
\frac{1}{2}\log
CR_{\mathrm{area}}(L_A,L_B;U,V).
\]
Thus, the affine angle can be understood as a parabolic degeneration
of the Cayley--Klein angle.
\end{remark}

\begin{proof}
As shown in the previous section, the quantity
\[
\sigma_\Lambda(L)
=
\frac{[OP_LP_U]}{[OP_LP_V]}
\]
is independent of the choice of $\Lambda$,
and depends only on the direction of the ray $L$.
It is therefore an affine invariant associated with directions.

Under normalization, we may assume that $U$ is the $x$-axis
and $V$ is the $y$-axis.
Then, writing the ray $L$ as $y=mx$, we obtain
\[
\sigma(L)=c\,m
\qquad (c>0),
\]
where $c$ depends only on $\Lambda$.

Hence,
\[
CR_{\mathrm{area}}(L_A,L_B;U,V)
=
\frac{\sigma(L_A)}{\sigma(L_B)}
=
\frac{m_A}{m_B}.
\]

On the other hand, in projective geometry,
directions of rays correspond to points on the line at infinity,
and the cross ratio
\[
cr(OA,OB;U_\infty,V_\infty)
\]
coincides with the cross ratio of slopes.

Therefore,
\[
\phi_{u,v}(O;A,B)
=
\frac{1}{2}\log
\frac{m_A}{m_B}
\]
agrees with
\[
\theta(A,B)
=
\frac{1}{2}
\log
cr(OA,OB;U_\infty,V_\infty).
\]

This shows that the affine angle introduced in this paper
arises as a parabolic degeneration of the Cayley--Klein angle.
\end{proof}

\section{Transformation Group}

In Cayley--Klein geometry, angles are understood as quantities
that are invariant under suitable subgroups of the projective transformation group.
This viewpoint, which characterizes geometries by their invariant transformation groups,
is based on Klein's Erlangen program.

Since, as shown in the previous section,
the affine angle can be interpreted as a Cayley--Klein angle,
it is natural to determine the subgroup of affine transformations
under which the $(u,v)$-affine angle is invariant.

In this section, we determine the transformation group
that preserves the $(u,v)$-affine angle.
We restrict ourselves to affine transformations that preserve
the reference directions $u,v$.

\begin{proposition}[Determination of the linear part]
Let $A\in GL(2,\mathbb{R})$ be a linear transformation
that preserves the $(u,v)$-affine angle.
Then $A$ preserves the directions $u,v$ as eigen-directions,
and in a suitable coordinate system it is represented as
\[
A=
\begin{pmatrix}
\alpha & 0\\
0 & \beta
\end{pmatrix}
\qquad (\alpha\beta>0).
\]
\end{proposition}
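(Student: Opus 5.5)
We work in the normalized coordinates used in the proof of Theorem~\ref{thm:CR-Lambda-property}: $U$ is the $x$-axis and $V$ is the $y$-axis. In these coordinates the angle between rays of slopes $m_A,m_B$ (same sign) is $\phi=\tfrac12\log(m_A/m_B)$. The task is to show that a linear map $A=\begin{pmatrix}p&q\\ r&s\end{pmatrix}$ preserving this quantity must be diagonal with $ps>0$.

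The key reduction uses A3 and the boundary behaviour noted in the remark on monotonicity. Preserving $\phi$ means that, for every pair of rays in a common component $\gamma$ of $D_P$, the image rays again lie in a common component and have the same slope ratio. Three facts follow.
\begin{enumerate}
\item $A$ maps the domain $D_P$ into itself: otherwise some finite angle would be sent to an undefined or divergent value.
\item By continuity, $A$ maps each component $\gamma$ into a single component.
\item Slope is transformed by the Möbius map $f(m)=\dfrac{r+sm}{p+qm}$, and the hypothesis reads $f(m_A)/f(m_B)=m_A/m_B$ for all $m_A,m_B$ in $\gamma$.
\end{enumerate}

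Fix $m_B$ in the third fact. Then $f(m)=k\,m$ on $\gamma$ for the constant $k=f(m_B)/m_B$. A Möbius map that agrees with $m\mapsto km$ on an interval agrees with it everywhere, so
\[
r+sm=km(p+qm).
\]
Comparing coefficients gives $r=0$, $kq=0$ and $s=kp$. Since $k\neq0$ (otherwise $A$ would be singular), we get $q=0$. Hence $A=\mathrm{diag}(p,s)$ with $k=s/p$.

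This establishes that $A$ preserves the eigen-directions $u$ and $v$. It also leaves open the sign condition $\alpha\beta>0$, which is the main obstacle. If $ps<0$, the slopes change sign and each component of $D_P$ is mapped to the other one. The slope ratio $m_A/m_B$ is nevertheless preserved, because the factor $k$ cancels. Pure ratio-invariance therefore does not exclude $\alpha\beta<0$.

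We will handle this with the orientation convention built into the Definition of domain and singular set. There $\sigma$ carries a sign fixed by the orientation from $U$ to $V$, and the angle lives on a fixed component $\gamma$. Preserving the $(u,v)$-affine angle should therefore include mapping the chosen component $\gamma$ to itself, equivalently preserving the ordered pair of reference rays $U,V$ together with their orientation. Under this reading, $\mathrm{diag}(p,s)$ maps slopes $m>0$ to slopes $(s/p)m>0$ exactly when $ps>0$. This yields $\alpha\beta>0$.

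If this reading turns out to be too weak, a fallback is available. One can argue that $ps<0$ reverses the oriented ordering of rays within a component, that is, it swaps which of $U$ and $V$ is approached as $\sigma\to0^+$. That would make $\phi$ preserved only up to a relabeling of the components, which should be excluded by the convention that the reference pair $(u,v)$ is ordered.
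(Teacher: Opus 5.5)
Your diagonalization step is the paper's argument: slopes transform by $m\mapsto (r+sm)/(p+qm)$, and invariance of $m_A/m_B$ forces the off-diagonal entries to vanish. You carry it out more carefully than the paper, which simply asserts $b=c=0$. Fixing $m_B$ to get $f(m)=km$ on $\gamma$, extending via the polynomial identity $r+sm=km(p+qm)$, and using $k\neq 0$ is a complete argument.

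The sign condition is where you genuinely differ, and your version is the sound one. The paper deduces $\alpha\beta>0$ from the requirement that the angle ``remain real-valued''. As you observe, this does not follow. Under $\mathrm{diag}(1,-1)$ slopes go by $m\mapsto -m$, so slopes of equal sign stay of equal sign and $m_A/m_B$ is unchanged. Hence $\phi$ stays real and is exactly preserved. Value-invariance alone therefore cannot give $\alpha\beta>0$. An extra hypothesis, such as your requirement that the component $\gamma$ be mapped to itself, is genuinely needed, and stating it explicitly is the right fix; the paper's proof has this gap, not yours.

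Two small corrections. First, mapping $\gamma$ to itself (for diagonal maps, equivalently $\det A>0$) gives $ps>0$. Preserving $U$ and $V$ as rays gives the stronger condition $p,s>0$, which is the group $(\mathbb{R}_{>0})^2$ used in the paper's next proposition. These readings are not equivalent, so pick one rather than calling them so. Second, drop the fallback paragraph. The claim that $ps<0$ swaps which of $U,V$ corresponds to $|\sigma|\to 0$ is false: under $m\mapsto -m$, $m\to 0$ still corresponds to $U$. A genuine swap would be $m\mapsto c/m$, which your first part already excludes, so that paragraph supplies no argument.
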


\begin{proof}
From the previous section, the affine angle is given by
\[
\phi_{u,v}(O;A,B)
=
\frac{1}{2}\log\frac{m_A}{m_B},
\]
where $m_A$ and $m_B$ denote the slopes of the rays $OA$ and $OB$.

A linear transformation
\[
(x,y)\mapsto (ax+by,\;cx+dy)
\]
acts on slopes by
\[
m\longmapsto \frac{c+dm}{a+bm}.
\]

For the $(u,v)$-affine angle to be invariant,
the ratio
\[
\frac{m_A}{m_B}
\]
must be preserved for all lines.
That is, we must have
\[
\frac{c+dm_A}{a+bm_A}\Big/\frac{c+dm_B}{a+bm_B}
=
\frac{m_A}{m_B}
\]
for all $m_A,m_B$.

Hence,
\[
b=c=0,
\]
and therefore
\[
A=
\begin{pmatrix}
\alpha & 0\\
0 & \beta
\end{pmatrix}.
\]

Moreover, in order for the affine angle to remain real-valued,
the induced transformation of slopes
\[
m\mapsto \frac{\beta}{\alpha}m
\]
must be a positive scaling.
Thus we require
\[
\frac{\beta}{\alpha}>0,
\]
which is equivalent to
\[
\alpha\beta>0.
\]
\end{proof}

\begin{proposition}[Invariant transformation group]
The full group of affine transformations preserving
the $(u,v)$-affine angle is given by
\[
G_{u,v}\ltimes\mathbb{R}^2,
\]
where $G_{u,v}\cong(\mathbb{R}_{>0})^2$
is the group of linear transformations preserving
the directions $u,v$.
\end{proposition}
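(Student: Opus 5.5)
Any affine map is written $T(x)=Ax+b$. I will split it into a translation part and a linear part and handle each separately, using the preceding proposition (Determination of the linear part) for $A$.

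\textbf{Step 1: translations preserve the angle.} Let $\tau_b(x)=x+b$. This map sends the vertex $O$ to $O+b$ and each ray $OA$ to a parallel ray. It also sends $U,V$ to lines through $O+b$ with the same directions $u,v$. Signed areas are invariant under translation. Hence $\sigma_{\tau_b(\Lambda)}(\tau_b(L))=\sigma_\Lambda(L)$, and by Theorem~\ref{thm:CR-Lambda-property} the area cross ratio does not depend on $\Lambda$. Equivalently, in normalized coordinates $\sigma(L)=c\,m$ depends only on the slope of $L$, and translations preserve slopes. Either way,
\[
\phi_{u,v}(O+b;A+b,B+b)=\phi_{u,v}(O;A,B).
\]
So all of $\mathbb R^2$ lies in the invariance group.

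\textbf{Step 2: the linear part.} If $T$ preserves the angle, then so does $\tau_{-b}\circ T=A$, by Step 1. The preceding proposition then forces $A=\mathrm{diag}(\alpha,\beta)$ in coordinates adapted to $u,v$, with $\alpha\beta>0$. Conversely, for such $A$ the slope transforms as $m\mapsto(\beta/\alpha)m$, so the ratio $m_A/m_B$, and hence $\phi_{u,v}$, is unchanged. The set of these matrices is a group isomorphic to $\{(\alpha,\beta):\alpha\beta>0\}$.

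\textbf{Step 3: the group structure.} The linear group $G_{u,v}$ normalizes the translations, since $A\tau_bA^{-1}=\tau_{Ab}$. Therefore the set of angle-preserving affine maps is exactly the semidirect product $G_{u,v}\ltimes\mathbb R^2$.

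The main obstacle is the identification $G_{u,v}\cong(\mathbb R_{>0})^2$. The group $\{\alpha\beta>0\}$ is $(\mathbb R_{>0})^2\times\{\pm I\}$. The factor $-I$ preserves slopes, and hence preserves $\phi$ as computed from $\Lambda$, but it reverses the orientation of the rays $U,V$. I would therefore argue as follows. If the reference directions $u,v$ are \emph{oriented} rays (as in the definition of the domain, where $U,V\in\mathcal R_P$ and the sign is taken ``consistently with the orientation from $U$ to $V$''), then preserving them forces $\alpha,\beta>0$, and the statement holds exactly. If $u,v$ are only unoriented directions on the line at infinity, the correct group is $(\mathbb R_{>0})^2\times\{\pm I\}$, and I would note this as a remark. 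I would state the oriented convention explicitly and then conclude.
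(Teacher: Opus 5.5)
Your Steps 1--3 follow the paper's own proof: translations preserve directions and area ratios, and the linear part is taken from the preceding proposition. You write this out more carefully than the paper does, with the $\tau_{-b}\circ T$ reduction, the converse check, and $A\tau_bA^{-1}=\tau_{Ab}$. You are also right that the last step is suspect. The paper passes without comment from ``$\alpha\beta>0$'' in the preceding proposition to $G_{u,v}\cong(\mathbb{R}_{>0})^2$.

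However, your diagnosis of that mismatch is wrong, because the restriction $\alpha\beta>0$ is not forced by the angle at all.

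\begin{itemize}
\item Your own converse computation shows this. The map $\mathrm{diag}(\alpha,\beta)$ acts on slopes by $m\mapsto(\beta/\alpha)m$, and $m_A/m_B$ is unchanged for \emph{any} nonzero $\beta/\alpha$.
\item A negative factor only exchanges the two sign classes of slopes. Pairs of rays in a common component still go to pairs in a common component, so real values stay real.
\item Equivalently, the paper's compatibility proposition gives $\sigma_{T(\Lambda)}(T(L))=\sigma_\Lambda(L)$ for every affine $T$ with $T(U)\parallel U$ and $T(V)\parallel V$.
\end{itemize}

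So the preceding proposition's real-valuedness argument for $\alpha\beta>0$ is invalid, and $\mathrm{diag}(1,-1)$ preserves $\phi_{u,v}$. Consequently, your fallback claim for unoriented $u,v$ is incorrect. The linear parts preserving the angle are $\{\mathrm{diag}(\alpha,\beta):\alpha\beta\neq0\}\cong(\mathbb{R}_{>0})^2\times\{\pm1\}^2$, not $(\mathbb{R}_{>0})^2\times\{\pm I\}$.

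Your oriented rescue has a related weakness. The value $\phi_{u,v}$ is computed from the \emph{lines} $U,V$ through the points $P_U,P_V\in\Lambda$, so it cannot detect the orientation of the rays. Under your convention, $\alpha,\beta>0$ comes from an extra hypothesis (preserve the oriented rays $U,V$), not from preservation of the angle.

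The accurate conclusion is one of the following. Either the statement holds with orientation-preservation of $U,V$ built into ``preserving'', or $G_{u,v}$ must be replaced by the full group $(\mathbb{R}^\times)^2$ of nonsingular diagonal maps in $(u,v)$-adapted coordinates. The group $(\mathbb{R}_{>0})^2$ cannot be obtained from angle-preservation alone.
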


\begin{proof}
Translations do not affect directions or ratios of areas,
and hence preserve the affine angle.
Therefore, the full invariant transformation group is
\[
(\mathbb{R}_{>0})^2\ltimes\mathbb{R}^2.
\]

This coincides with the group of affine transformations
whose linear part preserves $u$ and $v$ as eigen-directions.
\end{proof}

\section{Isoptic Sets}

In Euclidean geometry, for two fixed points $A$ and $B$, 
the locus of points from which the segment $AB$ is seen under a constant angle $\theta$ 
(the isoptic curve) is a circle. 
In other words, the isoptic set of the Euclidean angle is given by a circle.

Thus, the study of isoptic sets associated with a given angle quantity 
is a fundamental problem for understanding the geometric structure induced by that angle.

In this section, we investigate the isoptic sets of the $(u,v)$-affine angle 
and show that they are given by hyperbolas. 
Moreover, we characterize the region where the affine angle takes real values: 
it corresponds to the subset of the hyperbola on which $\sigma(L_{PA})$ and $\sigma(L_{PB})$ have the same sign. 
In general, this admissible region consists of both connected components of the hyperbola.

\begin{figure}[htbp]
  \centering
  \begin{minipage}{0.48\textwidth}
    \centering
    \includegraphics[width=\linewidth]{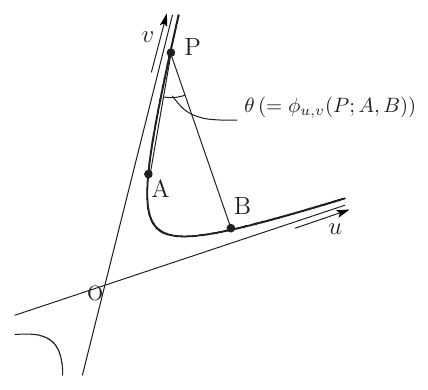}
    \subcaption{}
    \label{fig:affine-angle-isoptic-hyperbola}
  \end{minipage}
\caption{The locus forms a hyperbola; the real-valued affine angle is defined
on the subset where $\sigma(L_{PA})\sigma(L_{PB})>0$, which in general includes
both connected components.}
\end{figure}

\subsection{Affine Plane with Reference Directions}

\begin{definition}[Affine plane with reference directions]
\label{def:affine-plane-UV}
Let $\mathbb A^2$ be the real affine plane.
Fix a point $P\in\mathbb A^2$ and two independent rays
$U,V\in\mathcal R_P$ emanating from $P$.
We call the pair
\[
\Aff^2 := (\mathbb A^2;\, U,V)
\]
an affine plane with reference directions.
\end{definition}

Up to this point, we have denoted by $\phi_{u,v}(O;A,B)$
the $(u,v)$-affine angle between the rays $OA$ and $OB$ with vertex $O$.
From now on, for fixed points $A,B$,
we write $\phi_{u,v}(P;A,B)$ for the corresponding angle at $P$.

\begin{maintheorem}[Shape of the isoptic locus]
\label{mthe:equal-angle-set}
Let $\theta\in \mathbb{R}\setminus\{0\}$ be a constant,
and let $A,B$ be two distinct points in the affine plane.
Then the locus of points $P$ satisfying
\[
\phi_{u,v}(P;A,B)=\theta
\]
is given by a connected component of a hyperbola passing through $A$ and $B$.
Its asymptotes are parallel to the reference directions $u$ and $v$.

Moreover, the locus $\mathcal L_\theta$ is characterized
as the set of points $P$ for which the area cross ratio
\[
\frac{\sigma(L_{PA})}{\sigma(L_{PB})}
\]
is constant.
\end{maintheorem}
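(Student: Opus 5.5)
Normalize first. By the affine-invariance proposition for $\phi_{u,v}$, and since the transformation below preserves the pair of directions, we may apply an affine map sending $u$ to the $x$-direction and $v$ to the $y$-direction. Translations and this normalization carry hyperbolas with asymptotes parallel to $u,v$ to hyperbolas with axis-parallel asymptotes, so it suffices to prove the statement in these coordinates. By the computation in the proof of Theorem~\ref{thm:CR-Lambda-property}, $\sigma(L)=c\,m$ with $c>0$ depending only on $\Lambda$, where $m$ is the slope of $L$. By Theorem~\ref{thm:CR-Lambda-property2} the constant cancels, so
\[
\phi_{u,v}(P;A,B)=\tfrac12\log\frac{m_{PA}}{m_{PB}} .
\]
Write $P=(x,y)$, $A=(a_1,a_2)$, $B=(b_1,b_2)$, so $m_{PA}=(a_2-y)/(a_1-x)$ and $m_{PB}=(b_2-y)/(b_1-x)$.

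The condition $\phi_{u,v}(P;A,B)=\theta$ is then equivalent to $m_{PA}=k\,m_{PB}$ with $k=e^{2\theta}>0$ and $k\neq1$. This is where the "moreover" clause comes from: the locus is exactly where the area cross ratio equals the constant $k$. Clearing denominators gives
\[
(a_2-y)(b_1-x)=k\,(b_2-y)(a_1-x).
\]
Expanding, the quadratic part is $(1-k)xy$. Since $k\ne1$, this is a genuine conic whose only quadratic term is $xy$, so it has the form
\[
(1-k)(x-x_0)(y-y_0)=C
\]
for suitable $x_0,y_0,C$ obtained by completing the product. This is a hyperbola with asymptotes $x=x_0$ and $y=y_0$, which are parallel to $v$ and $u$. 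Substituting $P=A$ or $P=B$ makes both sides vanish, so the conic passes through $A$ and $B$ (as limit points of the locus, where a ray degenerates).

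The remaining work, and the main obstacle, is nondegeneracy and the restriction of the locus. First, I must check $C\neq0$. If $C=0$, the conic is a pair of lines $x=x_0$, $y=y_0$; since it passes through $A$ and $B$, this happens precisely when $A,B$ lie on a common line parallel to $u$ or $v$. I will compute $C$ explicitly, expecting something proportional to $k(a_1-b_1)(a_2-b_2)$, and treat that case separately or exclude it as degenerate. Second, the equation $m_{PA}=k\,m_{PB}$ must be intersected with the domain conditions: $P\ne A,B$; $PA,PB$ not parallel to $u$ or $v$; and $m_{PA},m_{PB}$ of the same sign, which is automatic from $k>0$ once both slopes are nonzero. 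Points of the hyperbola where a slope vanishes or is infinite occur only at finitely many points (its intersections with the horizontal and vertical lines through $A,B$), namely $A$ and $B$ themselves. Removing these punctures leaves arcs of the hyperbola.

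To identify the locus with "a connected component" as stated, I would argue that $A$ and $B$ lie on the same branch exactly when $(a_1-b_1)(a_2-b_2)$ has a suitable sign relative to $1-k$. I would then track which arcs between and beyond $A,B$ satisfy the equation with this $k$ rather than with $1/k$; the latter corresponds to $-\theta$, since the orientation of the angle swaps. This orientation bookkeeping is the delicate part, and I would handle it with a sign analysis of $(x-x_0)(y-y_0)$ on each branch. If it turns out both branches satisfy the equation, the honest conclusion is that the locus is the hyperbola minus $\{A,B\}$. That matches the remark before the section that the admissible region in general includes both connected components.
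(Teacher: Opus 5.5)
Your main line is correct and matches the paper's argument up to the choice of normalization. Both reduce $\phi_{u,v}(P;A,B)=\theta$ to $\sigma(L_{PA})/\sigma(L_{PB})=e^{2\theta}$, then clear denominators to get a conic whose quadratic part forces asymptotes parallel to $u,v$.

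The normalizations differ in what they cost and reveal. The paper also fixes $A=(-1,0)$, $B=(1,0)$, $u=(1,1)$, $v=(1,-1)$ and takes $\Lambda=AB$. This yields the clean form $p^2-(q+\coth\theta)^2=1-\coth^2\theta$, but such a normalization exists only when $AB$ is parallel to neither $u$ nor $v$, and the paper never states this restriction. You send only $u,v$ to the axes and keep $A,B$ free, which exposes exactly that case. Completing the product gives $x_0=(b_1-ka_1)/(1-k)$, $y_0=(a_2-kb_2)/(1-k)$ and $(x-x_0)(y-y_0)=-k(a_1-b_1)(a_2-b_2)/(1-k)^2$, so your guess for $C$ is right. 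When $a_2=b_2$ (resp.\ $a_1=b_1$), the locus is a line parallel to $v$ (resp.\ $u$) with one point removed, so the exclusion is genuinely needed.

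Your final paragraph anticipates difficulties that do not exist, and one expectation in it is false.

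\textbf{No $k$ versus $1/k$ issue.} Clearing denominators is reversible off the lines $x=a_1$ and $x=b_1$. You already observed that the only points of the conic on those lines are $A$ and $B$. So every other point of the conic satisfies $m_{PA}=k\,m_{PB}$ exactly, with both slopes nonzero and of the same sign.

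\textbf{No branch condition on $A,B$.} Since $(a_1-x_0)(b_1-x_0)=k(a_1-b_1)^2/(1-k)^2>0$, the points $A$ and $B$ always lie on the same branch.

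Hence the locus is precisely the hyperbola minus $\{A,B\}$. It consists of the branch through $A,B$, punctured at those two points, on which both slopes have the sign of the slope of $AB$. It also contains the entire opposite branch, on which both slopes have the other sign. This is not a single connected component. The paper's own proof likewise concludes that both branches are admissible, despite the wording of the statement. So your fallback conclusion is the correct one to state.
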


\begin{remark}
In Euclidean geometry, the isoptic locus of a segment is a circle,
whereas for the $(u,v)$-affine angle it is a hyperbola.
This difference reflects the fact that the Euclidean angle
is based on rotations,
while the $(u,v)$-affine angle is defined as the logarithm
of an area cross ratio associated with two parallel directions.
\end{remark}

\subsection{Hyperbolic Nature of the Isoptic Locus under Normalization}

We first show that, under normalization, the isoptic condition gives rise to a hyperbola.

\begin{figure}[htbp]
  \centering
 \begin{subfigure}{0.45\textwidth}
    \centering
    \includegraphics[scale=1.0]{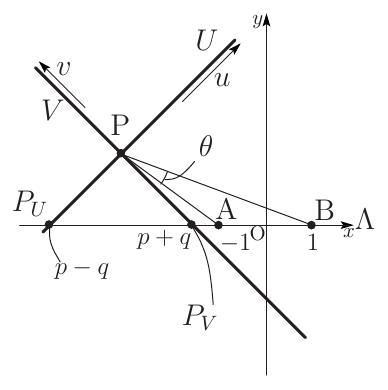}
    \caption{}
    \label{fig:affine-angle-isoptic-proof-normalized}
  \end{subfigure}
  \begin{subfigure}{0.45\textwidth}
    \centering
    \includegraphics[scale=1.0]{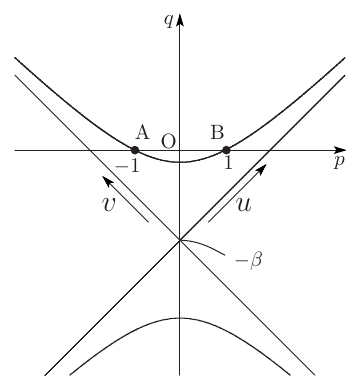}
    \caption{}
    \label{fig:isoptic-normalized-hyperbola-proof}
  \end{subfigure}
\caption{(a) Normalized configuration for the proof of the isoptic property, with fixed reference directions $u$ and $v$ and a point $P=(p,q)$. (b) Normalized isoptic curve obtained in the proof, given by $p^2-(q+\beta)^2=1-\beta^2$.}
\end{figure}

\begin{proposition}[Hyperbolicity of the isoptic locus]
\label{prop:isoptic-hyperbola}
Under normalization, the set of points $P=(p,q)$ satisfying
\[
\phi_{u,v}(P;A,B)=\theta
\]
is given by the hyperbola
\[
p^2-(q+\beta)^2=1-\beta^2,
\]
where $\beta$ is a constant determined by $\theta$.
\end{proposition}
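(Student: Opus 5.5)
The plan is to reduce to an explicit coordinate computation, using the invariance group from the previous section. The isoptic condition is a statement about slope ratios relative to the two reference directions. So I will place $u,v$ along the diagonals $y=x$ and $y=-x$, and use the freedom in $G_{u,v}\ltimes\mathbb R^2$ to normalize the segment.

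\textbf{Normalization.} First I apply an affine map sending $u,v$ to the directions $(1,1)$ and $(1,-1)$. Then I use a translation together with independent positive scalings along these two directions (the group $G_{u,v}$), so that $A=(-1,0)$ and $B=(1,0)$. This is possible exactly when $AB$ is not parallel to $u$ or $v$. Indeed, the midpoint goes to the origin, and the $u$- and $v$-components of $\vec{AB}$ can each be rescaled to $1$, up to a possible sign change, which is harmless after relabeling. By the invariance proposition, $\phi_{u,v}$ is unchanged under these maps, so it suffices to work in this frame.

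\textbf{Slopes in the $(u,v)$-frame.} In the frame spanned by $u,v$, a direction vector $(d_x,d_y)$ has $u$-coordinate proportional to $d_x+d_y$ and $v$-coordinate proportional to $d_x-d_y$. Hence, by the computation in Theorem~\ref{thm:CR-Lambda-property}, its $\sigma$ is a fixed positive multiple of
\[
m=\frac{d_x-d_y}{d_x+d_y}.
\]
For $P=(p,q)$ this gives
\[
m_{PA}=\frac{1+p-q}{1+p+q},\qquad m_{PB}=\frac{1-p+q}{1-p-q}.
\]
By Theorem~\ref{thm:CR-Lambda-property2} and Definition~\ref{def:affine-angle-acr}, the condition $\phi_{u,v}(P;A,B)=\theta$ is equivalent to $m_{PA}/m_{PB}=k$, where $k:=e^{2\theta}$.

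\textbf{The conic equation.} Clearing denominators gives
\[
(1+p-q)(1-p-q)=k\,(1+p+q)(1-p+q),
\]
that is,
\[
(1-q)^2-p^2=k\bigl((1+q)^2-p^2\bigr).
\]
Collecting terms yields
\[
(1-k)(1+q^2-p^2)=2(1+k)q.
\]
Since $\theta\neq 0$ we have $k\neq 1$, so I can divide by $1-k$. Setting
\[
\beta:=\frac{k+1}{k-1}=\coth\theta
\]
and completing the square in $q$ gives $p^2-(q+\beta)^2=1-\beta^2$. Because $|\beta|>1$, the right-hand side is nonzero, so this is a nondegenerate hyperbola. Its asymptotes $p=\pm(q+\beta)$ are parallel to $(1,1)$ and $(1,-1)$, that is, to $u$ and $v$. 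Substituting $(\pm1,0)$ shows that it passes through $A$ and $B$.

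\textbf{Main obstacle: the converse and excluded points.} The main difficulty is the converse direction together with the excluded points. Clearing denominators may introduce points where $P$ lies on a line through $A$ or $B$ parallel to $u$ or $v$ (some $m$ is $0$ or $\infty$), and the points $A,B$ themselves. These must be removed. I also have to check which points of the conic give $m_{PA},m_{PB}$ of the same sign, so that $\phi$ is real, as required by Proposition~\ref{prop:real-uv-affine-angle}. My plan is to note that on the conic the ratio $m_{PA}/m_{PB}$ equals $k>0$ identically wherever both slopes are defined. The sign condition is therefore automatic, and only finitely many points need to be removed. Selecting the component that gives the stated locus can then be deferred to the Main Theorem~\ref{mthe:equal-angle-set}.
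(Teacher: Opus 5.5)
Your proof is correct and is essentially the paper's own argument. You use the same normalization $A=(-1,0)$, $B=(1,0)$, $u=(1,1)$, $v=(1,-1)$, the same ratios $\sigma(L_{PA})=\frac{p-q+1}{p+q+1}$ and $\sigma(L_{PB})=\frac{p-q-1}{p+q-1}$ (the paper gets them from triangle areas with auxiliary line $\Lambda=AB$, $P_U=(p-q,0)$, $P_V=(p+q,0)$, where you use $(u,v)$-frame slopes), and the same completion of the square with $\beta=\frac{k+1}{k-1}=\coth\theta$.

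What you add beyond the paper is sound: the hypothesis $AB\not\parallel u,v$ needed for the normalization, and the check of extraneous points. Since each line through $A$ or $B$ parallel to an asymptote meets the hyperbola only at that point, the only extraneous points are $A$ and $B$. So the locus is the whole hyperbola minus $\{A,B\}$, on both branches, and no component selection is actually needed.
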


\begin{proof}
By an affine normalization, we may assume without loss of generality that
\[
A=(-1,0),\quad B=(1,0),\quad u=(1,1),\quad v=(1,-1).
\]
Let $P=(p,q)$ and suppose that $\phi_{u,v}(P;A,B)=\theta$.

Set $\lambda:=e^{2\theta}$. Then
\[
\phi_{u,v}(P;A,B)=\theta
\iff
\frac{1}{2}\log\frac{\sigma(L_{PA})}{\sigma(L_{PB})}=\theta
\iff
\frac{\sigma(L_{PA})}{\sigma(L_{PB})}=\lambda.
\]

Consider the lines in the directions $u=(1,1)$ and $v=(1,-1)$:
\[
U:\ y=x-p+q,\qquad
V:\ y=-x+p+q,\qquad
\Lambda:\ y=0.
\]
Then
\[
P_U=(p-q,0),\qquad
P_V=(p+q,0),
\]
and hence
\[
\sigma(L_{PA})=\frac{p-q+1}{p+q+1},\qquad
\sigma(L_{PB})=\frac{p-q-1}{p+q-1}.
\]

Therefore,
\begin{align*}
\frac{\sigma(L_{PA})}{\sigma(L_{PB})}=\lambda
&\iff
\frac{\frac{p-q+1}{p+q+1}}{\frac{p-q-1}{p+q-1}}=\lambda\\
&\iff
\frac{p^2-(q-1)^2}{p^2-(q+1)^2}=\lambda\\
&\iff
p^2-(q-1)^2=\lambda\bigl(p^2-(q+1)^2\bigr).
\end{align*}

Assume $\theta\neq0$, i.e.\ $\lambda\neq1$.
Then we obtain
\[
(1-\lambda)p^2+(\lambda-1)q^2+2(\lambda+1)q+(1-\lambda)=0.
\]

Dividing both sides by $\lambda-1$, we get
\[
-\,p^2+q^2+2\frac{\lambda+1}{\lambda-1}q-1=0.
\]

Let
\[
\beta:=\frac{\lambda+1}{\lambda-1}.
\]
Completing the square yields
\begin{equation}\label{eq:isoptic-hyperbola}
p^2-(q+\beta)^2=1-\beta^2.
\end{equation}

It is easily verified that this curve passes through
$A=(-1,0)$ and $B=(1,0)$.

Since $\theta\neq0$ implies $\lambda=e^{2\theta}\neq1$,
the resulting conic is non-degenerate.
Therefore, in the normalized coordinates $(p,q)$,
the isoptic locus is a hyperbola given by
\[
p^2-(q+\beta)^2=1-\beta^2.
\]
\end{proof}

\begin{corollary}\label{cor:isoptic-hyperbola-coth}
Equation~\cref{eq:isoptic-hyperbola} can be rewritten in the $xy$-plane as
\[
(y+\coth\theta)^2-x^2=\frac{1}{\sinh^2\theta}.
\]
\end{corollary}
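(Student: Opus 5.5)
The plan is a direct substitution into equation~\eqref{eq:isoptic-hyperbola} of Proposition~\ref{prop:isoptic-hyperbola}, followed by the renaming of the normalized coordinates $(p,q)$ as $(x,y)$. No new geometry is needed; the whole content is to express the constant $\beta$ in hyperbolic-function form.

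First, I will recall from the proof of Proposition~\ref{prop:isoptic-hyperbola} that $\lambda=e^{2\theta}$ and
\[
\beta=\frac{\lambda+1}{\lambda-1}.
\]
Multiplying numerator and denominator by $e^{-\theta}$ gives
\[
\beta=\frac{e^{\theta}+e^{-\theta}}{e^{\theta}-e^{-\theta}}=\coth\theta.
\]
This is legitimate because $\theta\neq0$, so $e^{\theta}\neq e^{-\theta}$. Next, I will apply the identity $\coth^2\theta-1=1/\sinh^2\theta$, which follows from $\cosh^2\theta-\sinh^2\theta=1$. This yields
\[
1-\beta^2=-\frac{1}{\sinh^2\theta}.
\]

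Substituting both expressions into $p^2-(q+\beta)^2=1-\beta^2$ gives
\[
p^2-(q+\coth\theta)^2=-\frac{1}{\sinh^2\theta}.
\]
Multiplying by $-1$ and writing $(x,y)$ for the normalized coordinates $(p,q)$ of $P$ produces the stated equation
\[
(y+\coth\theta)^2-x^2=\frac{1}{\sinh^2\theta}.
\]

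There is no real obstacle; the only point requiring care is the hypothesis $\theta\neq0$. It guarantees that $\lambda\neq1$, so $\beta$ is finite, and that $\sinh\theta\neq0$, so the right-hand side is defined. As a consistency check, I will verify that $(\pm1,0)$ satisfies the new equation, since $\coth^2\theta-1=1/\sinh^2\theta$. This confirms agreement with the earlier observation that the curve passes through $A$ and $B$.
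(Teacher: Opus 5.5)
Your proposal is correct and follows the paper's proof: you write $\beta=\frac{\lambda+1}{\lambda-1}=\coth\theta$ using $\lambda=e^{2\theta}$, use $1-\coth^2\theta=-1/\sinh^2\theta$, and substitute into $p^2-(q+\beta)^2=1-\beta^2$. Your remarks on why $\theta\neq0$ is needed and your check at $(\pm1,0)$ are harmless additions.
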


\begin{proof}
Since $\lambda=e^{2\theta}$, we have
\[
\beta=\frac{\lambda+1}{\lambda-1}
=\frac{e^{2\theta}+1}{e^{2\theta}-1}
=\frac{e^{\theta}+e^{-\theta}}{e^{\theta}-e^{-\theta}}
=\coth\theta.
\]

Moreover,
\[
1-\beta^2
=
1-\coth^2\theta
=
-\frac{1}{\sinh^2\theta}.
\]
Substituting into \cref{eq:isoptic-hyperbola}, we obtain
\[
(y+\coth\theta)^2-x^2=\frac{1}{\sinh^2\theta}.
\]
\end{proof}

\begin{remark}
From \cref{cor:isoptic-hyperbola-coth},
the center of the hyperbola is $(0,-\coth\theta)$,
and its asymptotes are given by
\[
x=\pm(y+\coth\theta).
\]
\end{remark}

\begin{remark}
The equation
\[
(y+\coth\theta)^2-x^2=\frac{1}{\sinh^2\theta}
\quad\iff\quad
(y\sinh\theta+\cosh\theta)^2-(x\sinh\theta)^2=1
\]
is equivalent to a Minkowski-type hyperbola
\[
Y^2-X^2=\text{const}.
\]
Thus, the point $P=(p,q)$ admits a natural parametrization
in terms of a hyperbolic angle (rapidity):
\[
p=\frac{\sinh t}{\sinh\theta},\qquad
q=\frac{\cosh t}{\sinh\theta}-\coth\theta.
\]
\end{remark}

\begin{remark}
As $\theta\to0$, we have
\[
\sinh\theta\sim\theta,\qquad \coth\theta\sim\frac{1}{\theta},
\]
and the isoptic hyperbola degenerates to the points $A$ and $B$.

On the other hand, as $\theta\to\infty$,
\[
\coth\theta\to1,\qquad \frac{1}{\sinh^2\theta}\to0,
\]
and the hyperbola approaches the pair of lines
\[
p=\pm(q+1).
\]
\end{remark}

\begin{remark}
The isoptic condition
\[
\frac{\sigma(L_{PA})}{\sigma(L_{PB})}=\lambda
\]
means that the area cross ratio associated with the two parallel
direction families $u$ and $v$, as seen from $P$, is constant.

Thus, the isoptic curve is the locus of points that preserve
the ratio of two parallel families of lines,
and the hyperbola naturally arises from this condition.
\end{remark}

\subsection{Isoptic Hyperbolas and Selection of Admissible Regions (Connected Components)}

We now determine which connected component of the hyperbola
obtained in \cref{prop:isoptic-hyperbola}
corresponds to the points $P$ for which
\[
\phi_{u,v}(P;A,B)\in\mathbb{R}.
\]

\begin{definition}[Signed projection quantities and component classification]
\label{def:sigma-sign-component}
Under the above normalization, for a point $P=(p,q)$ we define
\[
\sigma(L_{PA}):=\frac{p-q+1}{p+q+1},\qquad
\sigma(L_{PB}):=\frac{p-q-1}{p+q-1},
\]
whenever the denominators are nonzero.

In what follows, the signs of $\sigma(L_{PA})$ and $\sigma(L_{PB})$
are used to determine to which connected component
of the angle space (divided by the reference directions $U,V$)
the rays $PA$ and $PB$ belong.
\end{definition}

\begin{proposition}[Sign conditions and connected components]
\label{prop:same-component-PA-PB}
Under the normalization
\[
A=(-1,0),\quad B=(1,0),\quad u=(1,1),\quad v=(1,-1),
\]
let $P=(p,q)$ satisfy $p\pm q\neq \pm1$.

Then the following hold:
\begin{itemize}
\item[(i)]
\[
\sigma(L_{PA})>0 \iff |q|<|p+1|.
\]

\item[(ii)]
\[
\sigma(L_{PB})>0 \iff |q|<|p-1|.
\]

\item[(iii)]
\[
\sigma(L_{PA})<0 \iff |q|>|p+1|,\qquad
\sigma(L_{PB})<0 \iff |q|>|p-1|.
\]

\item[(iv)]
The rays $PA$ and $PB$ belong to the same connected component
(with respect to $(U,V)$) if and only if
\[
\sigma(L_{PA})\sigma(L_{PB})>0,
\]
equivalently,
\[
\bigl((p+1)^2-q^2\bigr)\bigl((p-1)^2-q^2\bigr)>0.
\]

\item[(v)]
In particular, $PA$ and $PB$ lie in the positive component if and only if
\[
|q|<|p+1|,\qquad |q|<|p-1|.
\]
\end{itemize}
\end{proposition}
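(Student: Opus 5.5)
The plan is to reduce every item to the sign of an elementary rational expression, using the explicit formulas of \cref{def:sigma-sign-component}. The one identity needed throughout is that for real $N$ and $D$ with $D\neq 0$, the quotient $N/D$ has the same sign as the product $ND$. The hypothesis $p\pm q\neq\pm1$ guarantees that no numerator or denominator vanishes. So every $\sigma$ is finite and nonzero, and the strict inequalities below are exhaustive.

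For (i), I will write
\[
\sigma(L_{PA})=\frac{(p+1)-q}{(p+1)+q}.
\]
Its sign equals the sign of $\bigl((p+1)-q\bigr)\bigl((p+1)+q\bigr)=(p+1)^2-q^2$. Hence $\sigma(L_{PA})>0$ if and only if $q^2<(p+1)^2$, that is, $|q|<|p+1|$. Item (ii) is the same computation with $p+1$ replaced by $p-1$, giving sign $\bigl((p-1)^2-q^2\bigr)$. For (iii), since $\sigma(L_{PA})\neq0$, it is negative exactly when it is not positive. By (i) this means $|q|\ge|p+1|$, and equality is excluded because it would force $p+1=\pm q$, i.e.\ $p\mp q=-1$. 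The argument for $\sigma(L_{PB})$ is identical.

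For (iv), I will invoke Lemma~\ref{lem:sign-sigma-Lambda} together with Proposition~\ref{prop:real-uv-affine-angle}. Two rays lie in the same admissible component exactly when $\sigma(L_{PA})/\sigma(L_{PB})>0$, which is equivalent to $\sigma(L_{PA})\sigma(L_{PB})>0$. Multiplying the two sign identities from (i) and (ii) then gives the stated product condition
\[
\bigl((p+1)^2-q^2\bigr)\bigl((p-1)^2-q^2\bigr)>0.
\]
Item (v) is simply the conjunction of (i) and (ii).

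The main obstacle is interpretive rather than computational, and it sits in (iv). Lemma~\ref{lem:sign-sigma-Lambda} splits $\Lambda\setminus\{P_U,P_V\}$ into three pieces, and the two outer pieces share the positive sign. Consequently, "same connected component" must be read in the sense fixed by \cref{def:sigma-sign-component}, namely classification by the sign of $\sigma$, which is also the sense in which the angle is real-valued. I will state this convention explicitly. I will then check that, with $\Lambda$ the line $y=0$ used in the proof of \cref{prop:isoptic-hyperbola}, the sign of $\sigma$ is determined by whether $P_L$ lies in $(P_U,P_V)$ or outside it. This makes the characterization in (iv) consistent with the earlier lemma.
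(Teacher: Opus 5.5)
Your proposal is correct and follows the paper's proof essentially step for step. The sign of each $\sigma$ is read off from the product of numerator and denominator, which gives $(p\pm1)^2-q^2$; (iii) is the negation; (iv) identifies ``same component'' with ``same sign of $\sigma$''; and (v) is the conjunction of (i) and (ii). Your explicit exclusion of the equality case in (iii) via $p\pm q\neq\pm1$, and your sign-based reading of ``connected component'' in (iv), are small clarifications the paper leaves implicit. The latter reading is also the natural topological one: in the pencil of lines through $P$, the two outer arcs of $\Lambda\setminus\{P_U,P_V\}$ are joined through the direction of $\Lambda$ itself.
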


\begin{proof}
Statements (i) and (ii) follow respectively from
\[
\sigma(L_{PA})>0 \iff (p-q+1)(p+q+1)>0 \iff (p+1)^2-q^2>0,
\]
\[
\sigma(L_{PB})>0 \iff (p-q-1)(p+q-1)>0 \iff (p-1)^2-q^2>0.
\]

Statement (iii) follows immediately by taking the negation of (i) and (ii).

For (iv), the claim follows from the fact that two rays belong to the same connected component
if and only if the corresponding values of $\sigma$ have the same sign.

Finally, (v) follows immediately by combining (i) and (ii).
\end{proof}

\begin{proposition}[Admissible connected components]\label{prop:isoptic-branch}
The necessary and sufficient condition for the affine angle
\[
\phi_{u,v}(P;A,B)=\theta
\]
to be defined as a real number is
\[
\sigma(L_{PA})\sigma(L_{PB})>0.
\]

Therefore, among the isoptic set, the subset that actually yields a real-valued angle
coincides with the part of the hyperbola
\[
p^2-(q+\beta)^2=1-\beta^2
\]
on which $\sigma(L_{PA})$ and $\sigma(L_{PB})$ have the same sign.
In general, this admissible subset consists of the two connected components of the hyperbola,
corresponding respectively to
\[
\sigma(L_{PA})>0,\ \sigma(L_{PB})>0
\]
and
\[
\sigma(L_{PA})<0,\ \sigma(L_{PB})<0.
\]
\end{proposition}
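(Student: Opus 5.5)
The proof will rest on the equation already obtained in \cref{prop:isoptic-hyperbola}, not on any new geometric input. The first assertion is just \cref{prop:real-uv-affine-angle} restated at the vertex $P$. The quantity $\phi_{u,v}(P;A,B)=\tfrac12\log\bigl(\sigma(L_{PA})/\sigma(L_{PB})\bigr)$ is a real number exactly when the argument of the logarithm is a positive real. By Lemma~\ref{lem:sign-sigma-Lambda} and \cref{prop:same-component-PA-PB}(iv), this is equivalent to $\sigma(L_{PA})\sigma(L_{PB})>0$, that is, to $PA$ and $PB$ lying in the same connected component.

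For the description of the admissible part of the isoptic set, I will use the fact that in the normalization $A=(-1,0)$, $B=(1,0)$, $u=(1,1)$, $v=(1,-1)$ the hyperbola was derived from $\sigma(L_{PA})=\lambda\,\sigma(L_{PB})$ with $\lambda=e^{2\theta}>0$. So at every point of $p^2-(q+\beta)^2=1-\beta^2$ where both $\sigma$'s are finite and nonzero, they automatically have the same sign. The only exceptional points lie on the four lines $p\pm q=\pm1$, where a numerator or denominator vanishes. I will intersect each line with the conic in its unsimplified form $p^2-(q-1)^2=\lambda\bigl(p^2-(q+1)^2\bigr)$. For example, substituting $p=q-1$ gives $0=-4\lambda q$, hence $q=0$ and $P=A$. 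The other three lines similarly give only $A$ or $B$. Hence the admissible set is the hyperbola with $A$ and $B$ removed, and on it the sign of $\sigma(L_{PA})$, which equals the sign of $\sigma(L_{PB})$, is locally constant by continuity.

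To identify the two sign classes with connected pieces, I will use the form of \cref{cor:isoptic-hyperbola-coth}. The branches are $y+\coth\theta=\pm\sqrt{x^2+\sinh^{-2}\theta}$. Both $A$ and $B$ satisfy $y+\coth\theta=\coth\theta$, so they lie on the same branch, namely the one with the sign of $\theta$. The other branch contains neither $A$ nor $B$, so it has constant sign. Evaluating at a convenient point, for instance the vertex $(0,-\coth\theta-\operatorname{sgn}\theta/|\sinh\theta|)$, should give $|q|>|p\pm1|$, hence the negative sign via \cref{prop:same-component-PA-PB}(iii). The branch through $A$ and $B$ is cut by them into three arcs. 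I will evaluate signs on the middle arc, near its vertex, and on the two outer arcs, for instance asymptotically where $|p|\to\infty$ and $|q|\approx|p|$; there (i)--(ii) apply after comparing $|q|$ with $|p\pm1|$.

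The main obstacle is this last bookkeeping step. The phrase ``two connected components corresponding respectively to $\sigma>0$ and $\sigma<0$'' has to be reconciled with the fact that the branch through $A$ and $B$ is split into arcs. My expectation is that the outer arcs and the far branch share one sign and the middle arc carries the other, or some similar pattern. The honest statement is then that the positive and negative sign classes are each a union of arcs, and ``in general'' must be read in that sense. I would verify the sign of each arc with the explicit rapidity parametrization $p=\sinh t/\sinh\theta$, $q=\cosh t/\sinh\theta-\coth\theta$, which reduces each sign to an inequality in $t$.
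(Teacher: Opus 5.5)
\textbf{Verdict: there is a genuine gap in the last step.}

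Your first assertion is handled as in the paper. Your second step is sharper than the paper's argument, which simply asserts the two-branch picture. The conic is $p^2-(q-1)^2=\lambda\bigl(p^2-(q+1)^2\bigr)$ with $\lambda=e^{2\theta}>0$, and the lines $p\pm q=\pm1$ meet it only at $A$ and $B$. So the sign condition holds automatically, and the admissible set is the whole hyperbola minus $\{A,B\}$.

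The gap is the final step. You leave it open, and the outcome you predict is wrong. The sign does not change as one passes through $A$ or $B$ along the branch. So your ``union of arcs'' reading is false, and the statement's branch-to-sign correspondence holds as written. As it stands, your proposal would end by contesting a true claim rather than proving it.

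\textbf{The missing observation.} In the normalization, $\sigma(L_{PA})=\frac{1-m}{1+m}$ depends only on the slope $m=q/(p+1)$ of the line $PA$. As $P\to A$ along the conic from either side, $m$ tends to the tangent slope at $A$. Differentiating $(y+\coth\theta)^2-x^2=1/\sinh^2\theta$ gives $y'=x/(y+\coth\theta)$, so this slope is $-\tanh\theta$. Hence $\sigma(L_{PA})\to e^{2\theta}>0$, while $\sigma(L_{PB})\to\sigma(L_{AB})=1$. Symmetrically, near $B$ one gets $\sigma(L_{PA})\to1$ and $\sigma(L_{PB})\to e^{-2\theta}$.

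So both quantities are positive on both sides of $A$ and of $B$. Your local-constancy argument then makes them positive on the entire branch through $A$ and $B$, minus those two points.

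On the other branch your vertex test works. For $\theta>0$ the vertex is $(0,-\coth(\theta/2))$, where $|q|>1=|p\pm1|$, giving the negative class. The case $\theta<0$ is the mirror image under $q\mapsto-q$.

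Your planned arc-by-arc check would confirm this. The middle-arc vertex $(0,-\tanh(\theta/2))$ has $|q|<1$. On the outer arcs $q\approx|p|-\coth\theta$, and $\coth\theta>1$ gives $|q|<|p\pm1|$. So all three arcs are positive.

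Thus the branch through $A,B$ (minus these points) is exactly the class $\sigma(L_{PA}),\sigma(L_{PB})>0$. The other branch is exactly the class $\sigma(L_{PA}),\sigma(L_{PB})<0$.
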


\begin{proof}
For $\phi_{u,v}(P;A,B)$ to be defined as a real number, 
the argument of the logarithm in
\[
\phi_{u,v}(P;A,B)=\frac{1}{2}\log\frac{\sigma(L_{PA})}{\sigma(L_{PB})}
\]
must be positive.
Hence the necessary and sufficient condition is
\[
\frac{\sigma(L_{PA})}{\sigma(L_{PB})}>0
\iff
\sigma(L_{PA})\sigma(L_{PB})>0.
\]

On the other hand, by \Cref{prop:same-component-PA-PB},
this is equivalent to the condition that the rays $PA$ and $PB$
belong to the same connected component determined by $(U,V)$.

Therefore, among the hyperbola obtained in \Cref{prop:isoptic-hyperbola},
the subset that actually defines the affine angle
is precisely the part where $\sigma(L_{PA})$ and $\sigma(L_{PB})$ have the same sign.
Since this condition corresponds, in general, to both connected components of the hyperbola,
the admissible set consists of its two branches.
\end{proof}

\begin{figure}[htbp]
  \centering
    \includegraphics[scale=1.0]{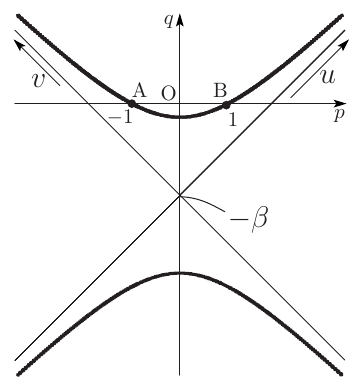}
   \caption{Admissible components of the normalized isoptic hyperbola.
Both connected components correspond to points where the affine angle $\phi_{u,v}(P;A,B)$ is real, characterized by the condition $\sigma(L_{PA})\sigma(L_{PB})>0$.}
    \label{fig:normalized-isoptic-admissible-branch}
\end{figure}

\bigskip

\begin{proof}[Proof of the Main Theorem]
By \cref{prop:isoptic-hyperbola}, the isoptic locus is given by a hyperbola.

Moreover, by \cref{prop:isoptic-branch}, the affine angle is real-valued
precisely on the subset where $\sigma(L_{PA})$ and $\sigma(L_{PB})$ have the same sign.

Therefore, the set of points $P$ satisfying
\[
\phi_{u,v}(P;A,B)=\theta
\]
is given by the admissible part of a hyperbola passing through $A$ and $B$.
In general, this admissible part consists of both connected components of the hyperbola.

Furthermore, by \cref{prop:isoptic-hyperbola,cor:isoptic-hyperbola-coth},
its asymptotes are parallel to the reference directions $u$ and $v$.

This completes the proof.
\end{proof}

\begin{remark}[Isoptic hyperbola and admissible region]
\label{rem:hyperbola-equal-angle}
By \cref{prop:same-component-PA-PB},
the necessary and sufficient condition for $\phi_{u,v}(P;A,B)$
to be real-valued is
\[
\sigma(L_{PA})\sigma(L_{PB})>0.
\]

Hence, among the isoptic locus $\mathcal L_\theta$,
only the points satisfying
\[
((p+1)^2-q^2)((p-1)^2-q^2)>0
\iff
(p^2-q^2+1)^2-4p^2>0
\]
correspond to real-valued affine angles.

In other words, the admissible part of the isoptic hyperbola
is the region separated by the two degenerate hyperbolas
\[
(p+1)^2-q^2=0,\qquad (p-1)^2-q^2=0,
\]
and in general consists of both connected components of the hyperbola.
\end{remark}

\begin{theorem}[Equivalent representations]
The affine angle admits the following equivalent descriptions:
\begin{enumerate}
  \item[(i)] \textbf{Representation via hyperbolic sector area.}  
  Consider a rectangular hyperbola with asymptotes $U$ and $V$.
  Then $\phi_{u,v}(O;A,B)$ coincides with the signed area
  of the hyperbolic sector determined by the rays $OA$ and $OB$.

  \item[(ii)] \textbf{Representation via triangle area ratios.}  
  Let $\Lambda$ be a line intersecting $U$ and $V$,
  and denote the intersection points by $P_U,P_V$.
  Let $A_\Lambda=OA\cap\Lambda$ and $B_\Lambda=OB\cap\Lambda$.
  Writing $[XYZ]$ for the signed area of triangle $XYZ$, we have
  \[
\phi_{u,v}(O;A,B)
=
\tfrac12 \log\!\left(
\frac{[O A_\Lambda P_U]/[O A_\Lambda P_V]}
     {[O B_\Lambda P_U]/[O B_\Lambda P_V]}
\right).
\]
\end{enumerate}
\end{theorem}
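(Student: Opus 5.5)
For part~(ii) there is almost nothing to prove; the substance is in part~(i). By \cref{def:affine-angle-acr}, $\phi_{u,v}(O;A,B)=\tfrac12\log\bigl(\sigma_\Lambda(L_A)/\sigma_\Lambda(L_B)\bigr)$. Here $\sigma_\Lambda(L)=[OP_LP_U]/[OP_LP_V]$ by definition, with $P_{L_A}=A_\Lambda$ and $P_{L_B}=B_\Lambda$. Substituting gives the displayed formula verbatim. Theorem~\ref{thm:CR-Lambda-property2} guarantees that the right-hand side does not depend on the choice of $\Lambda$, so (ii) holds for every admissible $\Lambda$. I would add one remark: the argument of the logarithm is positive exactly under the standing assumption that $A_\Lambda,B_\Lambda$ lie in the same component (Proposition~\ref{prop:real-uv-affine-angle}).

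For part~(i) I will work in the normalized coordinates already used in the proof of Theorem~\ref{thm:CR-Lambda-property}. In those coordinates $O$ is the origin, $U$ is the $x$-axis and $V$ is the $y$-axis. There, $\phi_{u,v}(O;A,B)=\tfrac12\log(m_A/m_B)$, where $m_A,m_B$ are the slopes of $OA,OB$, as in the proof of Theorem~\ref{thm:affine-angle-CK-degeneration}. I interpret ``rectangular hyperbola with asymptotes $U,V$'' as the hyperbola $xy=1$ in these coordinates. For rays in the component of negative slopes I would use $xy=-1$ instead.

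I parametrize the branch as $\gamma(t)=(e^{-t},e^{t})$. The ray of slope $m$ meets it at the parameter $t=\tfrac12\log m$. The signed area of the sector swept from $O$ is $\tfrac12\int (x\,dy-y\,dx)$, and along $\gamma$ the integrand is $\tfrac12(1+1)\,dt=dt$. Hence the sector between the rays $OB$ and $OA$ has signed area $t_A-t_B=\tfrac12\log m_A-\tfrac12\log m_B=\phi_{u,v}(O;A,B)$. The negative component is handled by the same computation with $(e^{-t},-e^{t})$, with signs tracked.

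The main obstacle is the normalization. Area in the affine plane is only defined up to a scalar, and the sector area on $xy=c$ equals $c\,\phi$. So the identification requires fixing the hyperbola: either as $xy=\pm1$ with respect to a unimodular frame adapted to $U,V$, or equivalently by requiring that the triangle $O$, $(1,0)$, $(0,1)$ have area $\tfrac12$. I will state this convention explicitly. I will then check it is consistent with invariance by noting that the diagonal maps $\mathrm{diag}(\alpha,\beta)$ in $G_{u,v}$ preserve both $\phi$ and $xy=1$ exactly when $\alpha\beta=1$, i.e.\ exactly when they preserve area. This confirms that the equality in (i) is compatible with the symmetry group found in the previous section.
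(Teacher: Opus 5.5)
Your proposal is correct and follows the paper's route: (ii) is just the definition of $\sigma_\Lambda$ together with $\Lambda$-independence, and (i) is the sector-area computation on $xy=1$ in coordinates adapted to $U,V$, which the paper only cites while you carry it out via $\gamma(t)=(e^{-t},e^{t})$, $t=\tfrac12\log m$. Your explicit version is the more reliable one, since it states the normalization of the hyperbola and gets the constant right: on $xy=1$ the sector area is $\log(x_2/x_1)=\tfrac12\log(m_1/m_2)$, whereas the paper's quoted $\tfrac12\log(x_2/x_1)$ is off by a factor $2$ if the $x_i$ are abscissae. One thing to spell out rather than leave as ``signs tracked'': on the negative-slope branch the counterclockwise signed area equals $-\phi$, so there the sector must be oriented from $U$ toward $V$.
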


\begin{proof}
(i) follows from the fact that the sector area of the rectangular hyperbola
$XY=1$ is given by $\frac{1}{2}\log\frac{x_2}{x_1}$.

(ii) is contained in the argument of
\cref{thm:CR-Lambda-property}, and is therefore omitted.
\end{proof}

\section{The Power Theorem for Hyperbolas}

In this section, we show that the affine angle geometry introduced in this paper
possesses a rich geometric structure beyond isoptic loci,
by establishing a power-type theorem for hyperbolas.

For circles, it is well known that for any line through a fixed point,
the product of the two directed segments determined by the intersections
with the circle is constant (the power of a point theorem).
A corresponding result has also been established for parabolas
within the framework of difference-angle geometry.

Here, we present the hyperbolic analogue of this phenomenon.

In the case of hyperbolas, however, there are several possible candidates
for what should be regarded as the “power.”
In this paper, we adopt a quantity defined via an area-based construction
associated with the asymptotic directions.

Although the two asymptotes of a hyperbola must be treated symmetrically,
we will show that the resulting product is in fact determined
by the data from only one asymptotic direction.
\medskip

First, we specify three requirements that a quantity must satisfy
in order to be regarded as a “power” in this paper.

\begin{convention}[General requirements for a power]
We require that a quantity called the “power” satisfy the following conditions:
\begin{enumerate}
\item For any curve $C$ and point $P$, there exists a line $L$ through $P$
intersecting $C$ at two points $A,B$ (counting tangency as a double point).

\item For $P,A,B$, a quantity is defined in a uniform way,
and the product of the two corresponding values (called the “power”)
is independent of the choice of $L$.

\item The sign of this product classifies the position of $P$
as lying inside, outside, or on the curve $C$.
\end{enumerate}
\end{convention}

\medskip

For computational convenience, we normalize the hyperbola to the rectangular form
\[
y=\frac{\kappa}{x}.
\]

For three points $O=(0,0)$, $A=(x_A,y_A)$, $B=(x_B,y_B)$,
we define the area of the parallelogram spanned by $OA$ and $OB$ as
\[
|x_Ay_B-y_Ax_B|.
\]

\begin{definition}[Hyperbolic core quantity]
\label{def:hyperbolic-core-quantity}
We define
\[
pq-\kappa
\]
and call it the hyperbolic core quantity.
This depends only on the point $P$
and is independent of the choice of lines.
\end{definition}

\begin{lemma}[Normalization to a rectangular hyperbola]
\label{lem:affine-normalize}
Let $H_\theta$ be the hyperbola containing the isoptic locus $\mathcal L_\theta$
obtained in \cref{mthe:equal-angle-set}.
Since its asymptotes are parallel to $u,v$,
there exists an affine transformation together with a scaling
that maps $H_\theta$ to
\[
xy=\kappa \qquad (\kappa\neq0).
\]
\end{lemma}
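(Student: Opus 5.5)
The plan is to start from the normal form of $H_\theta$ obtained earlier. By \cref{prop:isoptic-hyperbola} and \cref{cor:isoptic-hyperbola-coth}, in the normalized coordinates $H_\theta$ is
\[
(y+\coth\theta)^2-x^2=\frac{1}{\sinh^2\theta}.
\]
Its center is $C=(0,-\coth\theta)$, and its asymptotes $x=\pm(y+\coth\theta)$ are parallel to $u=(1,1)$ and $v=(1,-1)$. Because the statement concerns an arbitrary affine plane with reference directions, I first pull back by the normalizing affine map used in \cref{prop:isoptic-hyperbola}. The composition of affine maps is affine, so it suffices to work in these coordinates.

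Next I translate the center to the origin by $(x,y)\mapsto(x,\,y+\coth\theta)$. This changes neither directions nor the form of the quadratic part, and the equation becomes $Y^2-X^2=1/\sinh^2\theta$. Then I apply the linear change of variables
\[
\xi=Y+X,\qquad \eta=Y-X,
\]
which sends the asymptotic directions $u,v$ to the coordinate axes. Since $Y^2-X^2=(Y+X)(Y-X)$, the curve becomes $\xi\eta=1/\sinh^2\theta$. It remains to check that this linear map is invertible; its determinant is $-2$.

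Setting $\kappa:=1/\sinh^2\theta$ gives the form $xy=\kappa$, and $\kappa\neq0$ because $\theta\neq0$, so $\sinh\theta$ is finite and nonzero. If one wants $\kappa=\pm1$, an extra scaling $(\xi,\eta)\mapsto(\xi/\sqrt{|\kappa|},\eta)$ does this. That is the ``scaling'' in the statement. Because the map is diagonal in the asymptotic frame, it preserves $u,v$ and lies in $G_{u,v}\ltimes\mathbb{R}^2$ up to orientation.

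The main subtlety is the orientation and sign bookkeeping. The map $(X,Y)\mapsto(\xi,\eta)$ has negative determinant, so if the lemma is to use only angle-preserving maps (the group $G_{u,v}\ltimes\mathbb{R}^2$ with $\alpha\beta>0$), I will replace it by $\xi=X+Y$, $\eta=Y-X$ composed with a swap, or simply accept the sign of $\kappa$ flipping. Only the existence of an affine map is claimed, so either choice suffices. I will also note that the admissible components from \cref{prop:isoptic-branch} map to the two branches of $xy=\kappa$.
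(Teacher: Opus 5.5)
Your core argument is correct and matches the paper's reasoning. The paper gives no written proof beyond the remark that the asymptotes are parallel to $u,v$. Your version makes that remark explicit: after the normalization of the isoptic proposition, you translate the center to the origin and apply the invertible linear map $(X,Y)\mapsto(X+Y,\,Y-X)$. This turns $Y^2-X^2=1/\sinh^2\theta$ into $\xi\eta=\kappa$ with $\kappa=1/\sinh^2\theta>0$, and a composite of affine maps is affine.

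The surrounding remarks contain errors you should delete, though none is needed for the lemma:
\begin{itemize}
\item That map has determinant $+2$, not $-2$, and your proposed replacement $\xi=X+Y$, $\eta=Y-X$ is the very same map.
\item It sends $u,v$ to the coordinate directions rather than fixing them, so the composite is not in $G_{u,v}\ltimes\mathbb{R}^2$ in general.
\item The scaling $(\xi,\eta)\mapsto(\xi/\sqrt{\kappa},\eta)$ yields $\xi'\eta=\sqrt{\kappa}$, not $1$. Use $(\xi/\kappa,\eta)$, or divide both coordinates by $\sqrt{\kappa}$.
\end{itemize}
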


\begin{definition}[Asymptotic symmetric area]
\label{def:projection-area}
Let $A$ be a point on a hyperbola $\mathcal H$.
From $A$, project onto the two asymptotes $U_1,U_2$
along directions parallel to the other asymptote,
and denote the projection points by $A_1,A_2$.

For a point $P$, let $S(P,A_i)$ denote the area of the parallelogram
spanned by $PA$ and $PA_i$.
We call this the projected area in direction $i$.

We then define the asymptotic symmetric area of the segment $PA$ by
\[
S_{P,A}:=\sqrt{S(P,A_1)S(P,A_2)}.
\]
\end{definition}

\begin{proposition}[One-sided determinacy]
\label{prop:one-sided-determinacy}
The asymptotic symmetric area is determined by the projected area
in only one asymptotic direction.
More precisely,
\[
S_{P,A}S_{P,B}
=
S(P,A_1)S(P,B_1)
=
S(P,A_2)S(P,B_2).
\]
\end{proposition}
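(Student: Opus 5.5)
The plan is to compute everything explicitly in the normalized coordinates of \cref{lem:affine-normalize}. There the hyperbola is $\mathcal H: xy=\kappa$ with asymptotes $U_1$ (the $x$-axis) and $U_2$ (the $y$-axis). We show that both one-sided products equal the same multiple of the hyperbolic core quantity of \cref{def:hyperbolic-core-quantity}. The statement about $S_{P,A}S_{P,B}$ then follows at once. The products in question are ratios of parallelogram areas, and affine maps rescale all such areas by a common factor. So the normalization is harmless, provided we track that factor (or simply state the result in normalized coordinates, as the paper does for the power).

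\textbf{Step 1 (projected areas).} Let $P=(p,q)$ and $A=(a,\kappa/a)\in\mathcal H$. Projecting along the direction of the other asymptote gives $A_1=(a,0)$ and $A_2=(0,\kappa/a)$. A direct determinant computation gives
\[
S(P,A_1)=\bigl|\det(A-P,\,A_1-P)\bigr|=\frac{|\kappa|\,|a-p|}{|a|},
\qquad
S(P,A_2)=|\kappa-aq|=|a|\,\Bigl|\frac{\kappa}{a}-q\Bigr|.
\]
The second formula is the first one with the roles of $x$ and $y$ exchanged, as the symmetry $(x,y)\mapsto(y,x)$ of $xy=\kappa$ predicts.

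\textbf{Step 2 (direction 1 via Vieta).} Let $L$ be a non-vertical line $y-q=m(x-p)$ through $P$ meeting $\mathcal H$ at $A,B$ with abscissae $a,b$. Substituting into $xy=\kappa$ gives
\[
f(x):=mx^2+(q-mp)x-\kappa=0 .
\]
Hence $ab=-\kappa/m$, and $(a-p)(b-p)=f(p)/m=(pq-\kappa)/m$. Therefore
\[
S(P,A_1)S(P,B_1)=\kappa^2\,\frac{|(a-p)(b-p)|}{|ab|}=|\kappa|\,|pq-\kappa|,
\]
which is independent of $m$. A double root (tangency) is covered by the same identity.

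\textbf{Step 3 (direction 2 by symmetry).} Apply the reflection $(x,y)\mapsto(y,x)$. It preserves $\mathcal H$, the pair of asymptotes (swapping them), the quantity $pq-\kappa$, and unsigned parallelogram areas. It therefore exchanges the roles of $A_1$ and $A_2$. Applying Step 2 to the reflected configuration gives $S(P,A_2)S(P,B_2)=|\kappa|\,|pq-\kappa|$ as well. Alternatively, Vieta can be run on the $y$-coordinates directly.

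\textbf{Step 4 (conclusion).} By \cref{def:projection-area},
\[
S_{P,A}S_{P,B}=\sqrt{S(P,A_1)S(P,B_1)\cdot S(P,A_2)S(P,B_2)}
\]
is the geometric mean of two equal numbers, so it equals each of them. This gives the claimed chain of equalities.

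The main obstacle is only bookkeeping of the exceptional lines. Vertical lines are not covered by Step 2, but they become horizontal under the reflection of Step 3, so they can be handled by that symmetry. Lines parallel to an asymptote meet $\mathcal H$ only once and are excluded by requirement~1 of the convention. The case $P$ on an asymptote, where $A_i=P$ could degenerate, should be checked against the formula in Step 1.
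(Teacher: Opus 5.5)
Your proof is correct, but it is organized differently from the paper's.

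\textbf{The paper's route.} It parametrizes the chord by the abscissae $\alpha,\beta$ of its endpoints and computes $S(P,A_1)S(P,B_1)=\frac{\kappa^2}{\alpha\beta}|(p-\alpha)(p-\beta)|$. It then writes $S_{P,A}$ and $S_{P,B}$ separately and checks that their product is the same expression. The second direction is called analogous. Only later, in Theorem~\ref{thm:hyperbolic-power}, is the chord equation used to turn this into $\kappa|pq-\kappa|$.

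\textbf{Your route.} You reverse the logic. Using the slope parametrization and Vieta ($ab=-\kappa/m$, $(p-a)(p-b)=f(p)/m$), you evaluate each one-sided product directly as $|\kappa|\,|pq-\kappa|$. You get the second direction from the reflection $(x,y)\mapsto(y,x)$, and you obtain $S_{P,A}S_{P,B}$ as the geometric mean of two equal numbers.

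\textbf{What each buys.} Your approach gives the proposition and the power theorem in one stroke. It also keeps the absolute values correct when $A,B$ lie on different branches ($ab<0$), where the paper's $\frac{\kappa}{\alpha}|\alpha-p|$ and $\frac{\kappa^2}{\alpha\beta}$ can become negative. Most usefully, it never needs an individual formula for $S_{P,A}$. With the definition given, and using collinearity of $P,A,B$, one actually gets $S_{P,A}=|\kappa|\,|p-\alpha|/\sqrt{|\alpha\beta|}$. This is not the paper's displayed $\kappa\sqrt{|(p-\alpha)(\beta-p)/(\alpha\beta)|}$. For $\kappa=1$, $\alpha=1$, $\beta=2$, $P=(3,0)$, the true value is $S_{P,A}=\sqrt2$, while the displayed formula gives $1$; only the product of the two displayed expressions is correct. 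In exchange, the paper's route presents ``symmetric product $=$ one-sided product'' as an identity before any evaluation, which is the structural point the proposition is meant to make.

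\textbf{One slip in your last paragraph.} Vertical lines are parallel to the asymptote $U_2$, so they meet $\mathcal H$ only once and are excluded just like horizontal lines. The reflection sends them to horizontal lines, which Step~2 does not cover either. So the remark that they ``can be handled by that symmetry'' should be replaced by this exclusion.
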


\begin{proof}
Since area is changed only by a constant factor under affine transformations,
we may normalize the configuration and assume that
\[
\mathcal H:\ y=\frac{\kappa}{x},\qquad P=(p,q).
\]
Let $\ell$ be a line through $P$ intersecting $\mathcal H$ at
\[
A=(\alpha,\kappa/\alpha),\qquad B=(\beta,\kappa/\beta).
\]
Then
\[
S(P,A_1)=\frac{\kappa}{\alpha}|\alpha-p|,\qquad
S(P,B_1)=\frac{\kappa}{\beta}|\beta-p|
\]
Hence,
\[
S(P,A_1)S(P,B_1)
=
\frac{\kappa^2}{\alpha\beta}|(p-\alpha)(p-\beta)|.
\]

On the other hand,
\[
S_{P,A}
=
\kappa\sqrt{\left|(p-\alpha)\frac{\beta-p}{\alpha\beta}\right|},\qquad
S_{P,B}
=
\kappa\sqrt{\left|(p-\beta)\frac{\alpha-p}{\alpha\beta}\right|}.
\]

Therefore,
\[
S_{P,A}S_{P,B}
=
\frac{\kappa^2}{\alpha\beta}|(p-\alpha)(p-\beta)|.
\]

This proves
\[
S_{P,A}S_{P,B}=S(P,A_1)S(P,B_1).
\]
The argument for the other asymptote is analogous.
\end{proof}

\begin{remark}[Symmetric asymmetry]
Although the asymptotic symmetric area is defined symmetrically
with respect to the two asymptotic directions,
\cref{prop:one-sided-determinacy} shows that the product
is determined by data from only one direction.

Thus the essential quantity is given by
\[
\frac{(p-\alpha)(p-\beta)}{\alpha\beta},
\]
which, up to a constant factor, corresponds to the hyperbolic core quantity
\[
pq-\kappa.
\]
\end{remark}

\begin{figure}[htbp]
 \begin{subfigure}{0.50\textwidth}
    \includegraphics[scale=0.8]{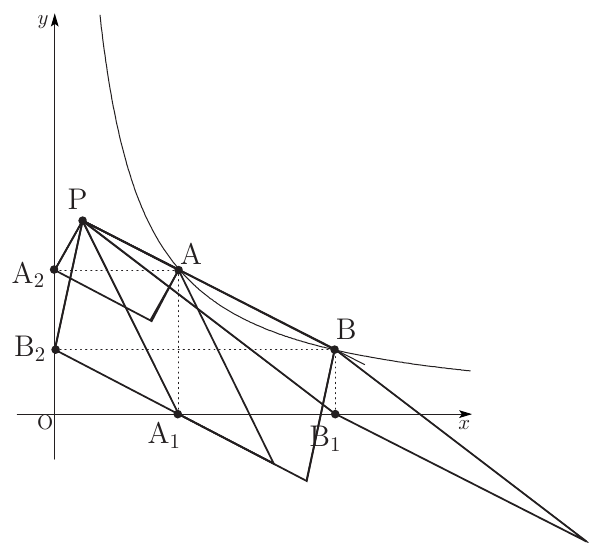}
    \caption{}
    \label{fig:hyperbolic-power-two-sided}
  \end{subfigure}
\hfill
  \begin{subfigure}{0.50\textwidth}
    \includegraphics[scale=0.8]{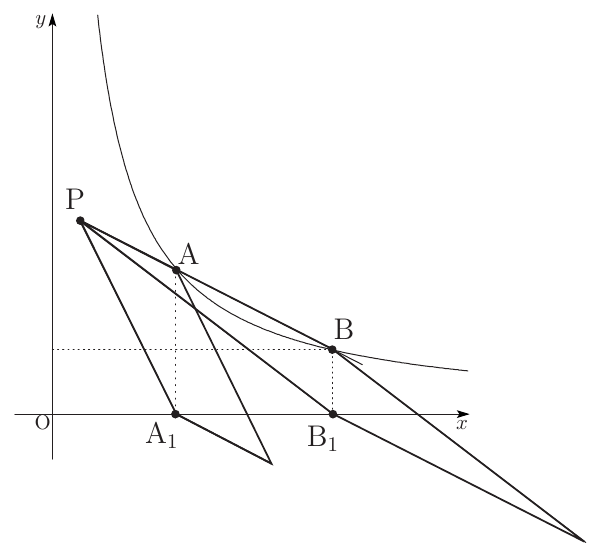}
    \caption{}
    \label{fig:hyperbolic-power-one-sided}
  \end{subfigure}
\caption{(a) Symmetric construction of the hyperbolic power using both asymptotic directions.
(b) Reduced one-sided construction.
The product $S(P,A_1)S(P,B_1)$ is independent of the secant $PAB$
and determines the hyperbolic power.}
\end{figure}

\begin{theorem}[Power theorem for hyperbolas]
\label{thm:hyperbolic-power}
Let
\[
\mathcal H:\ y=\frac{\kappa}{x}
\]
be a hyperbola, and let $P=(p,q)$ be a point.
Let $\ell$ be a line through $P$ intersecting $\mathcal H$ at two points
\[
A=(\alpha,\kappa/\alpha),\qquad
B=(\beta,\kappa/\beta).
\]

Let $S_{P,A}$ and $S_{P,B}$ denote the asymptotic symmetric areas
of the segments $PA$ and $PB$, respectively.
Then the product
\[
S_{P,A}S_{P,B}
\]
is independent of the choice of the line $\ell$, and is given by
\[
S_{P,A}S_{P,B}
=
\kappa\,|pq-\kappa|.
\]
\end{theorem}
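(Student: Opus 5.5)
The plan is to reduce everything to the one-sided formula of \cref{prop:one-sided-determinacy} and then evaluate it with Vieta's formulas. In the normalized coordinates, the computation in the proof of \cref{prop:one-sided-determinacy} already gives
\[
S_{P,A}S_{P,B}=\frac{\kappa^2}{\alpha\beta}\,\bigl|(p-\alpha)(p-\beta)\bigr|
=\kappa^2\left|\frac{(p-\alpha)(p-\beta)}{\alpha\beta}\right| .
\]
So it suffices to show that the symmetric expression $(p-\alpha)(p-\beta)/(\alpha\beta)$ in the abscissae of the two intersection points depends only on $P$, and not on the secant $\ell$.

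To do this, I will parametrize the secant. Suppose $\ell$ is not parallel to an asymptote and meets $\mathcal H$ in two points, so it is neither vertical nor horizontal. Write $\ell: y-q=m(x-p)$ with $m\neq 0$. Substituting $y=\kappa/x$ and clearing the denominator $x$ gives the quadratic
\[
m x^2+(q-mp)x-\kappa=0,
\]
whose roots are exactly $\alpha,\beta$. Vieta's formulas give $\alpha\beta=-\kappa/m$ and $\alpha+\beta=(mp-q)/m$. Hence
\[
(p-\alpha)(p-\beta)=p^2-p(\alpha+\beta)+\alpha\beta=\frac{pq-\kappa}{m}.
\]
Dividing by $\alpha\beta$, the slope $m$ cancels:
\[
\frac{(p-\alpha)(p-\beta)}{\alpha\beta}=-\frac{pq-\kappa}{\kappa}.
\]
Substituting this back yields $S_{P,A}S_{P,B}=|\kappa|\,|pq-\kappa|$. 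This equals $\kappa\,|pq-\kappa|$ under the convention $\kappa>0$, which can be arranged in \cref{lem:affine-normalize} by a reflection if necessary. In any case I would state the result with $|\kappa|$ to be safe. The right-hand side is visibly independent of $\ell$, and it is a multiple of the hyperbolic core quantity of \cref{def:hyperbolic-core-quantity}.

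The algebra is routine. I expect the main obstacle to be the bookkeeping of admissible secants and signs. Lines parallel to an asymptote ($m=0$ or vertical) meet $\mathcal H$ in at most one finite point, so they must be excluded, in line with requirement 1 of the convention. A tangent line is handled by letting $\alpha=\beta$, which the Vieta argument covers with no change. I would also re-check the projected-area formula $S(P,A_1)=\tfrac{\kappa}{\alpha}|\alpha-p|$ used in \cref{prop:one-sided-determinacy}, and insert absolute values where $\alpha<0$, so that the factor $\kappa^2/(\alpha\beta)$ is really $\kappa^2/|\alpha\beta|$. This is consistent with taking the absolute value of the final ratio above.
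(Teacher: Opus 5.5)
Your proof is correct and follows the paper's route. You reduce via the one-sided determinacy proposition to $\kappa^2|(p-\alpha)(p-\beta)|/|\alpha\beta|$, then use $P\in\ell$ to eliminate $\alpha+\beta$; the paper does this with the chord equation $y=\kappa(\alpha+\beta-x)/(\alpha\beta)$ evaluated at $P$, which is your Vieta relation $m=-\kappa/(\alpha\beta)$ written differently.

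Your extra care with absolute values, giving $|\kappa|\,|pq-\kappa|$, is justified. The paper keeps $\kappa^2/(\alpha\beta)$ outside the absolute value (and writes $S(P,A_1)=\frac{\kappa}{\alpha}|\alpha-p|$), so its derivation tacitly assumes $\kappa>0$ and $\alpha\beta>0$, that is, both intersection points on one branch.
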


\begin{remark}
Although the formulation in \cref{thm:hyperbolic-power} is symmetric,
its structure is essentially one-sided:
the value is completely determined by a single asymptotic direction.

This can be seen by comparing
\cref{fig:hyperbolic-power-two-sided} and
\cref{fig:hyperbolic-power-one-sided}.
\end{remark}

\begin{proof}
Let the intersections of $\ell$ and $\mathcal H$ be
$A=(\alpha,\kappa/\alpha)$ and $B=(\beta,\kappa/\beta)$.
Then $\ell$ is given by
\[
\ell:\ y=\frac{\kappa(\alpha+\beta-x)}{\alpha\beta}.
\]

Since $P=(p,q)$ lies on $\ell$, we have
\[
q=\frac{\kappa(\alpha+\beta-p)}{\alpha\beta}.
\]
Thus,
\[
\frac{pq\alpha\beta}{\kappa}
=
(\alpha+\beta)p-p^2.
\]

By \cref{prop:one-sided-determinacy}, we have
\[
S_{P,A}S_{P,B}
=
S(P,A_1)S(P,B_1)
=
\frac{\kappa^2}{\alpha\beta}|(p-\alpha)(p-\beta)|.
\]

Hence,
\begin{align*}
S_{P,A}S_{P,B}
&=
\frac{\kappa^2}{\alpha\beta}|(p-\alpha)(p-\beta)|\\
&=
\frac{\kappa^2}{\alpha\beta}|p^2-(\alpha+\beta)p+\alpha\beta|\\
&=
\frac{\kappa^2}{\alpha\beta}\left|-\frac{1}{\kappa}pq\alpha\beta+\alpha\beta\right|\\
&=
\kappa|pq-\kappa|.
\end{align*}
\end{proof}

We call this value the \emph{hyperbolic power} of the point $P$
with respect to $\mathcal H$, and denote it by $\Pi(P;\mathcal H)$.
It follows that the hyperbolic power satisfies all three requirements
stated above.

\begin{remark}
The sign of the quantity
\[
pq-\kappa
\]
determines on which side of the hyperbola
\[
\mathcal H:\ xy=\kappa
\]
the point $P=(p,q)$ lies.
In particular,
\[
pq-\kappa>0,\qquad pq-\kappa<0
\]
divide the plane into two regions.

As in the case of circles, this sign provides a classification
of the position of $P$ relative to $\mathcal H$.

In particular, when
\[
pq-\kappa>0,
\]
there exist real tangent lines from $P$ to $\mathcal H$.
\end{remark}

\begin{theorem}[Hyperbolic radical axis]
\label{thm:hyperbolic-radical-axis}
Let $\mathcal{H}$ and $\mathcal{I}$ be two hyperbolas whose asymptotes are parallel to $u$ and $v$,
and suppose that they intersect at two distinct points $A$ and $B$.
Then for any point $P$ on the line $\ell_{AB}$ containing the common chord $AB$, we have
\[
\Pi(P;\mathcal{H})=\Pi(P;\mathcal{I}).
\]
The line $\ell_{AB}$ is called the \emph{hyperbolic radical axis} of $\mathcal{H}$ and $\mathcal{I}$.
\end{theorem}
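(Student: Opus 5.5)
We plan to work in a single affine coordinate system adapted to both hyperbolas, rather than normalizing each separately. Apply an affine transformation sending $u$ to the $x$-direction and $v$ to the $y$-direction. This is allowed because the affine invariance results above apply, and signed areas scale by a common factor $\det$. In these coordinates both hyperbolas have axis-parallel asymptotes, so they can be written as
\[
\mathcal H:\ f_{\mathcal H}(x,y):=(x-a)(y-b)-\kappa=0,\qquad
\mathcal I:\ f_{\mathcal I}(x,y):=(x-c)(y-d)-\mu=0 .
\]
By a translation, the power theorem \cref{thm:hyperbolic-power} applies to each hyperbola. Hence the hyperbolic power of $P=(p,q)$ with respect to $\mathcal H$ is governed by the translated core quantity $f_{\mathcal H}(p,q)$, and similarly for $\mathcal I$.

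\textbf{Key step.} The main observation is that
\[
f_{\mathcal H}-f_{\mathcal I}=(c-a)\,y+(d-b)\,x+\text{const},
\]
because the quadratic term $xy$ cancels. So $f_{\mathcal H}-f_{\mathcal I}$ is an affine-linear function. It vanishes at the two distinct points $A,B\in\mathcal H\cap\mathcal I$, so it vanishes identically on the line $\ell_{AB}$. (If it vanished identically on the whole plane, then $\mathcal H=\mathcal I$, which we exclude.) Consequently $f_{\mathcal H}(P)=f_{\mathcal I}(P)$ for every $P\in\ell_{AB}$.

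\textbf{Secant cross-check.} As a second, more geometric route, parametrize the secant $\ell_{AB}$ as $P+tw$ with $w=(w_1,w_2)$. Restricted to this line, $f_{\mathcal H}$ is a quadratic in $t$ with leading coefficient $w_1w_2$ and roots $t_A,t_B$. Therefore $f_{\mathcal H}(P)=w_1w_2\,t_At_B$, and the identical expression holds for $f_{\mathcal I}$. This shows the equality comes from the common chord itself, in direct analogy with the circle case. It also matches the secant-product form $\tfrac{\kappa^2}{\alpha\beta}|(p-\alpha)(p-\beta)|$ used in the proof of \cref{thm:hyperbolic-power}.

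\textbf{Main obstacle.} The difficulty is passing from equality of core quantities to equality of $\Pi$. \cref{thm:hyperbolic-power} gives $\Pi(P;\mathcal H)=\kappa|f_{\mathcal H}(P)|$, and this carries the factor $\kappa$ (respectively $\mu$). That factor comes from normalizing each hyperbola separately via \cref{lem:affine-normalize}, including a scaling. I would first compute $\Pi$ intrinsically, in the common frame, from \cref{def:projection-area} and \cref{prop:one-sided-determinacy}, to see whether the $\kappa$ prefactor is genuinely present. If it is, the statement must be read with $\Pi$ understood as the normalized power, i.e.\ the core quantity $|f(P)|$ in a common asymptote-adapted frame (equivalently the secant product $|w_1w_2\,t_At_B|$). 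Under that reading the argument above completes the proof. Otherwise one needs the extra hypothesis $\kappa=\mu$, and I would state explicitly whichever of these conventions is adopted.
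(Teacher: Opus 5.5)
Your key step is correct. In one frame with $u,v$ along the axes, $f_{\mathcal H}-f_{\mathcal I}$ is affine-linear and vanishes at $A$ and $B$, hence on $\ell_{AB}$. The secant identity $f_{\mathcal H}(P)=w_1w_2\,t_At_B$ is also right, and $w_1w_2\neq0$ holds automatically, since a line parallel to an asymptote meets a hyperbola at most once. This is a different route from the paper's. The paper ``normalizes both hyperbolas to $y=\kappa/x$'', notes that $A,B$ are shared, and concludes $\Pi(P;\mathcal H)=S_{P,A}S_{P,B}=\Pi(P;\mathcal I)$ from \cref{thm:hyperbolic-power}. In other words, it treats $S_{P,A}S_{P,B}$ as a function of $P,A,B$ alone.

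The gap is that you leave your ``Main obstacle'' undecided, and your own setup decides it against the statement as written. In the common frame each hyperbola needs only a translation, which preserves area, to become $xy=\kappa$. So no separate scaling enters, and \cref{thm:hyperbolic-power} gives $\Pi(P;\mathcal H)=|\kappa|\,|f_{\mathcal H}(P)|$ and $\Pi(P;\mathcal I)=|\mu|\,|f_{\mathcal I}(P)|$, up to one area factor common to both. Directly from \cref{def:projection-area}, with asymptotes $x=a$, $y=b$, one has $S(P,A_1)S(P,A_2)=|(x_A-a)(y_A-b)|\,|(x_A-p)(y_A-q)|=|\kappa|\,|(x_A-p)(y_A-q)|$. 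Hence $S_{P,A}$ depends on the hyperbola, not just on $P$ and $A$, and on $\ell_{AB}$ you get $\Pi(P;\mathcal H)/\Pi(P;\mathcal I)=|\kappa|/|\mu|$, a frame-independent ratio. The theorem therefore fails whenever $|\kappa|\neq|\mu|$.

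Here is a concrete counterexample. Take $\mathcal H: xy=1$ and $\mathcal I: (x-1)(y-1)=5$; their common chord lies on $x+y=-3$. At $P=(0,-3)$ you get $f_{\mathcal H}(P)=f_{\mathcal I}(P)=-1$, but $\Pi(P;\mathcal H)=1$ and $\Pi(P;\mathcal I)=5$. You can verify these values straight from the projections $A_1,A_2$.

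The paper's proof misses this for two reasons. A simultaneous normalization of both curves to the same $xy=\kappa$ is impossible, since $xy=\kappa$ and $xy=\mu$ with $\kappa\neq\mu$ are disjoint. And the projections are taken onto each curve's own asymptotes.

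So commit to what you actually proved: on $\ell_{AB}$ the monic core quantities coincide. Equivalently, $\Pi(P;\mathcal H)/|\kappa|=\Pi(P;\mathcal I)/|\mu|$. Equivalently again, the secant quantity $\sqrt{|(x_A-p)(y_A-q)(x_B-p)(y_B-q)|}=|w_1w_2t_At_B|$ agrees for both curves. That quantity depends only on $P,A,B$ and the directions $u,v$, and it is what the paper's argument implicitly needs. The literal equality of $\Pi$ requires the extra hypothesis $|\kappa|=|\mu|$, that is, equal areas of the parallelogram $|(x-a)(y-b)|$ cut out by the asymptotes.
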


\begin{proof}
The statement is trivial for $P=A,B$, since both powers vanish.

Assume $P\neq A,B$.
Under normalization, both hyperbolas take the form $y=\frac{\kappa}{x}$,
and the intersection points $A,B$ coincide for $\mathcal{H}$ and $\mathcal{I}$.

By \cref{thm:hyperbolic-power}, we have
\[
\Pi(P;\mathcal{H})
=
S_{P,A}S_{P,B}
=
\Pi(P;\mathcal{I}).
\]
\end{proof}

\begin{remark}[Degenerate cases]
If the hyperbolas are tangent at a point, the radical axis is the common tangent at that point.

If the two hyperbolas coincide, then the powers coincide for all points,
and the radical axis is not defined, as in the case of circles.
\end{remark}

\begin{figure}[htbp]
  \centering

  \begin{subfigure}{0.32\textwidth}
    \centering
    \includegraphics[width=\textwidth]{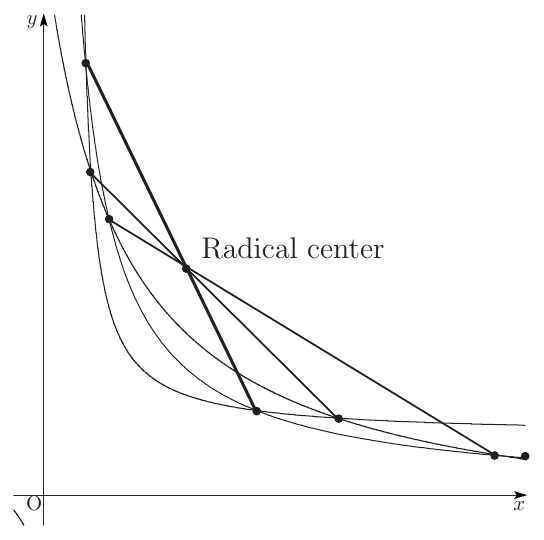}
    \caption{All three hyperbolas lie on the same branch.}
  \end{subfigure}
  \hfill
  \begin{subfigure}{0.32\textwidth}
    \centering
    \includegraphics[width=\textwidth]{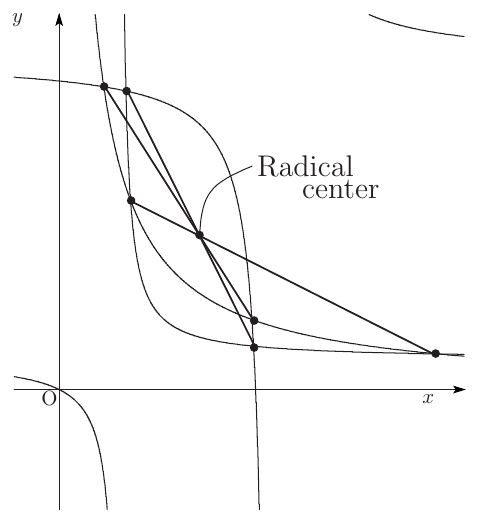}
    \caption{Two hyperbolas share a branch, while the third lies on the opposite branch.}
  \end{subfigure}
  \hfill
  \begin{subfigure}{0.32\textwidth}
    \centering
    \includegraphics[width=\textwidth]{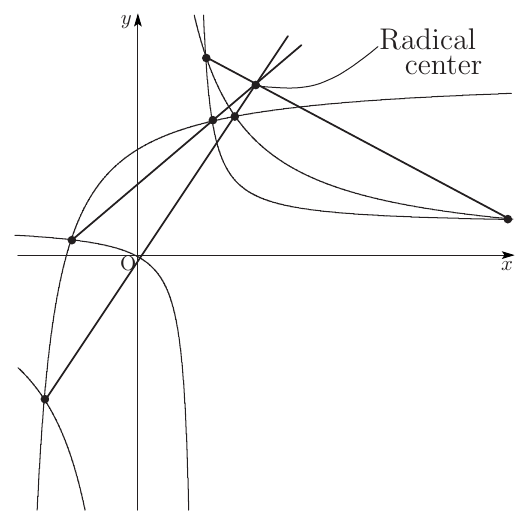}
    \caption{Two hyperbolas belong to one family, while the third belongs to the conjugate family.}
  \end{subfigure}

  \caption{
Hyperbolic radical center.
For three hyperbolas whose asymptotes are parallel to the reference directions,
their hyperbolic radical axes are concurrent at a single point.
These three figures illustrate all possible configurations of branches and families (up to relabeling).
  }

  \label{fig:Hyperbolic-radical-center}
\end{figure}

\begin{theorem}[Hyperbolic radical center]
\label{thm:hyperbolic-radical-center}
Let $\mathcal{H}, \mathcal{I}, \mathcal{J}$ be three hyperbolas whose asymptotes are parallel to $u$ and $v$.
Then the three hyperbolic radical axes
\[
\ell_{\mathcal{H}\mathcal{I}},\quad
\ell_{\mathcal{I}\mathcal{J}},\quad
\ell_{\mathcal{J}\mathcal{H}}
\]
are concurrent.
Their intersection point is called the \emph{hyperbolic radical center}.
\end{theorem}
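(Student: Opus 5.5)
Since all three hyperbolas have asymptotes parallel to $u$ and $v$, I will first apply an affine transformation sending $u,v$ to the coordinate axes. This changes every power by a common constant factor at most, so it does not affect concurrency of the radical axes. In these coordinates each hyperbola has the form
\[
\mathcal H:\ (x-a_1)(y-b_1)=\kappa_1,\qquad
\mathcal I:\ (x-a_2)(y-b_2)=\kappa_2,\qquad
\mathcal J:\ (x-a_3)(y-b_3)=\kappa_3 .
\]
The key observation is that, translating \cref{thm:hyperbolic-power} to the centre $(a_i,b_i)$, the hyperbolic core quantity of $P=(x,y)$ with respect to the $i$-th hyperbola is
\[
f_i(x,y):=(x-a_i)(y-b_i)-\kappa_i = xy - b_i x - a_i y + a_ib_i-\kappa_i .
\]
All the $f_i$ share the same quadratic part $xy$. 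Hence each difference $f_i-f_j$ is an affine function of $(x,y)$. The zero set of $f_i - f_j$ contains the common points of the two hyperbolas, so it is exactly the line $\ell_{AB}$ of \cref{thm:hyperbolic-radical-axis}. This is the analogue of the classical fact that two circle equations differ by a linear function.

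With this description, concurrency is immediate. If $P$ lies on $\ell_{\mathcal H\mathcal I}$ and on $\ell_{\mathcal I\mathcal J}$, then $f_1(P)=f_2(P)$ and $f_2(P)=f_3(P)$. Therefore $f_3(P)=f_1(P)$, so $P\in\ell_{\mathcal J\mathcal H}$. To show that $\ell_{\mathcal H\mathcal I}$ and $\ell_{\mathcal I\mathcal J}$ actually meet, I will check that their direction vectors $(b_2-b_1,a_2-a_1)$ and $(b_3-b_2,a_3-a_2)$ are not parallel. This holds exactly when the three centres are not collinear. In the collinear case the three axes are parallel, and I would say they concur at a point at infinity, exactly as in the circle case.

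The main obstacle is reconciling this ``core quantity'' description with the paper's definition of power, which is $\Pi=\kappa|pq-\kappa|$. That definition carries an absolute value and a factor $\kappa$ depending on the hyperbola, so equality of the $\Pi$'s is not literally equality of the $f_i$. My plan is to use the signed core quantities $f_i$ as the defining data of the radical axis. \cref{thm:hyperbolic-radical-axis} shows that $\ell_{AB}$ is the line through $A$ and $B$, and the affine function $f_i-f_j$ vanishes at $A$ and $B$, so its zero set is the same line. The same difference also gives the right line when the hyperbolas are tangent, or when they meet in fewer than two real points. After that, the concurrency argument above applies verbatim.
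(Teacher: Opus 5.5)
Your proof is correct, and its skeleton is the paper's: a point on two axes has equal ``power'' for all three hyperbolas, hence lies on the third axis. The difference is the quantity that carries the transitivity, and that difference matters.

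The paper's version is shorter and uses \cref{thm:hyperbolic-radical-axis} as a black box. From $\Pi(P;\mathcal H)=\Pi(P;\mathcal I)=\Pi(P;\mathcal J)$ it concludes $P\in\ell_{\mathcal J\mathcal H}$. That step silently uses the converse of \cref{thm:hyperbolic-radical-axis} (equal powers force $P$ onto the chord line), which is never proved. It also presupposes that $\ell_{\mathcal H\mathcal I}$ and $\ell_{\mathcal I\mathcal J}$ meet.

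Your identification $\ell=\{f_i=f_j\}$, with $f_i-f_j$ affine, gives both inclusions at once. Your parallelism test then isolates the collinear-centres case, which the paper's proof passes over and where the affine statement genuinely fails. For example, $xy=1$, $(x-1)(y-1)=-2$ and $(x-2)(y-2)=-1$ pairwise meet in two real points, yet their radical axes are the three distinct parallel lines $x+y=4$, $x+y=2$, $x+y=3$.

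Your worry about $\Pi$ is also substantive, not cosmetic. Take the first two of these hyperbolas and $P=(2,2)$ on their chord line $x+y=4$. Then $f_{\mathcal H}(P)=f_{\mathcal I}(P)=3$, but computing $S_{P,A}S_{P,B}$ directly gives $3$ and $6$. So with $\Pi$ as defined, points of the chord line need not have equal powers when $|\kappa_1|\neq|\kappa_2|$. Your switch to the signed core quantities is exactly what makes the transitivity argument valid.

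Minor slip: $(b_2-b_1,a_2-a_1)$ is a normal vector of $\{f_1=f_2\}$, not a direction vector. The parallelism criterion is unaffected.
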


\begin{proof}
Let $P\in\ell_{\mathcal{H}\mathcal{I}}\cap\ell_{\mathcal{I}\mathcal{J}}$.
Then
\[
\Pi(P;\mathcal{H})=\Pi(P;\mathcal{I}),\qquad
\Pi(P;\mathcal{I})=\Pi(P;\mathcal{J}).
\]
Hence $\Pi(P;\mathcal{H})=\Pi(P;\mathcal{J})$,
so $P\in\ell_{\mathcal{J}\mathcal{H}}$.
Therefore, the three lines are concurrent.
\end{proof}

In contrast to the focal equation in difference-angle geometry,
which characterizes the parabola,
the main theorem of the parabolic power provides its quantitative aspect.
The correspondence between these two viewpoints
forms the foundation of the parabolic structure in difference-angle geometry.

\appendix
\section{On the Structural Aspects of Affine Angle Geometry}

In this appendix, we present several propositions concerning hyperbolas
that arise by analogy with Euclidean and difference-angle geometries.
They can be viewed as hyperbolic counterparts of the area invariance
associated with equally spaced configurations on circles and parabolas,
where the spacing is logarithmic rather than linear.

\subsection{Area invariance under geometric progression}

\begin{figure}[htbp]
  \centering
 \begin{subfigure}{0.45\textwidth}
    \centering
    \includegraphics[scale=1.0]{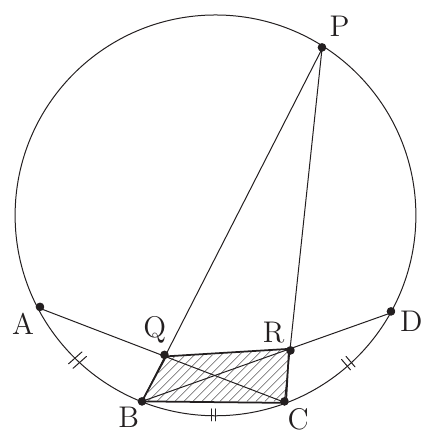}
    \caption{}
    \label{fig:area-invariance-circle}
  \end{subfigure}
  \begin{subfigure}{0.45\textwidth}
    \centering
    \includegraphics[scale=1.0]{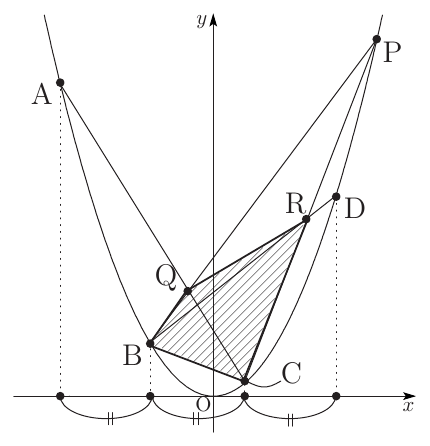}
    \caption{}
    \label{fig:area-invariance-parabola}
  \end{subfigure}
\caption{Corresponding configurations on the circle and the parabola.
(a) Equal-arc subdivision on the circle.
(b) Equal difference-angle norm subdivision on the parabola.
In both cases, the area of the quadrilateral $BCRQ$ is invariant
with respect to the position of $P$.}
\end{figure}

The following problem was originally posed by the author
well before the development of difference-angle geometry,
based on analogies between circles and parabolas.

\begin{proposition}
Let $A,B,C,D$ be four distinct points on a circle such that
\[
\overset{\frown}{AB}=\overset{\frown}{BC}=\overset{\frown}{CD}.
\]
Let $P$ be a point on the arc $\overset{\frown}{AD}$ not containing $B$.
Let $Q=AC\cap PB$ and $R=BD\cap PC$.
Then the area of the quadrilateral $BCRQ$ is independent of the position of $P$.
\end{proposition}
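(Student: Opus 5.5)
The plan is to use inscribed angles and the law of sines, with no coordinates. The key observation is that the quadrilateral $BCRQ$ has diagonals $BR$ and $CQ$. These lie on the fixed chords $BD$ and $CA$ respectively, so the angle $\omega$ between the diagonals does not depend on $P$. Hence
\[
[BCRQ]=\tfrac12\, BR\cdot CQ\,\sin\omega ,
\]
and it suffices to show that $BR\cdot CQ$ is independent of $P$.

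Let each of the equal arcs $AB,BC,CD$ subtend an inscribed angle $\alpha$. Then:
\begin{itemize}
\item $\angle BPC=\alpha$, since it subtends arc $BC$ and $P$ lies on the opposite arc;
\item $\angle ACB=\alpha$, since it subtends arc $AB$;
\item $\angle DBC=\alpha$, since it subtends arc $CD$.
\end{itemize}
Put $x=\angle PBC$ and $y=\angle PCB$. From triangle $PBC$ we get $x+y=\pi-\alpha$.

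In triangle $QBC$ the angles at $B$ and $C$ are $x$ and $\alpha$, so the angle at $Q$ is $\pi-x-\alpha$. The law of sines gives
\[
CQ=\frac{BC\,\sin x}{\sin(x+\alpha)}.
\]
In triangle $RBC$ the angles at $C$ and $B$ are $y$ and $\alpha$, so similarly
\[
BR=\frac{BC\,\sin y}{\sin(y+\alpha)}.
\]
Substituting $y=\pi-\alpha-x$ yields $\sin y=\sin(x+\alpha)$ and $\sin(y+\alpha)=\sin x$. The two sine factors therefore cancel, leaving $BR\cdot CQ=BC^2$, which is constant.

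The main obstacle is justifying the configuration rather than the computation. One must check three things:
\begin{itemize}
\item $Q$ lies on segments $AC$ and $PB$, and $R$ lies on segments $BD$ and $PC$, so that the angles above are the interior angles of triangles $QBC$ and $RBC$ as claimed;
\item $BCRQ$ is a convex quadrilateral whose diagonals are $BR$ and $CQ$ and meet at $AC\cap BD$, so that the diagonal area formula applies;
\item the ordering $A,B,C,D,P$ around the circle makes $\angle BPC$ equal to $\alpha$ rather than $\pi-\alpha$.
\end{itemize}
All three follow from that cyclic ordering, since $P$ is on arc $AD$ not containing $B$. I would verify them via the standard fact that two chords cross inside the circle precisely when their endpoints alternate around it, applied to the chord pairs $(AC,PB)$, $(BD,PC)$ and $(AC,BD)$.
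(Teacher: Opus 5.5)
Your proof is correct. The paper gives no proof of this circle statement (it is quoted as a previously published problem). The only argument of this type it carries out is for the hyperbolic analogue, Proposition~\ref{prop:hyperbola-area-invariance}.

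Your proof shares that argument's first step. The diagonals $BR$ and $CQ$ lie on the fixed chords $BD$ and $CA$, so $[BCRQ]=\tfrac12\,BR\cdot CQ\,\sin\omega$ (the paper writes $\tfrac12|\overrightarrow{QC}\times\overrightarrow{RB}|$), and everything reduces to showing that the product of the diagonal lengths is constant. From there the paper parametrizes the conic and computes $|x_C-x_Q|\,|x_B-x_R|$ in coordinates until the dependence on $p$ cancels. You instead use inscribed angles and the law of sines.

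Your computation is really the similarity $\triangle QCB\sim\triangle CBR$ (angle $\alpha$ at $C$, resp.\ $B$, and angle $x$ at $B$, resp.\ $R$). This explains why $P$ drops out and gives $CQ\cdot BR=BC^2$. It also yields the closed form $[BCRQ]=\tfrac12\,BC^2\sin 2\alpha$: triangle $EBC$ with $E=AC\cap BD$ has base angles $\alpha$, so $\omega=\pi-2\alpha$. The paper's coordinate route is less illuminating, but it is the one that adapts directly to the parabola and hyperbola, where no Euclidean inscribed-angle theorem is available.

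One small tightening of your configuration checks. Alternation for the pair $(AC,BD)$ only shows that $E$ lies on both chords, not on the sub-segments $BR$ and $CQ$. Convexity in fact follows from your first check alone. With $Q$ interior to $PB$ and $R$ interior to $PC$, the quadrilateral $BCRQ$ is the triangle $PBC$ with the corner triangle $PQR$ cut off, hence convex. So its diagonals $BR$ and $CQ$ cross, necessarily at $E$. (The paper applies the same diagonal formula in the hyperbolic case without any such justification.)
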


The corresponding prototype is the following.
Here, $\dnorm{\cdot}$ denotes the difference-angle norm.

\begin{proposition}
Let $A,B,C,D$ be four distinct points on the parabola
\[
\mathcal P:\ y=\kappa x^2
\]
such that
\[
\dnorm{AB}=\dnorm{BC}=\dnorm{CD}.
\]
Let $P$ be a point on $\mathcal P$ excluding the arc $\overset{\frown}{AD}$ containing $B$.
Let $Q=AC\cap PB$ and $R=BD\cap PC$.
Then the area of the quadrilateral $BCRQ$ is independent of the position of $P$.
\end{proposition}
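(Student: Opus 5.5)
We work in affine coordinates adapted to the parabola. We interpret the hypothesis $\dnorm{AB}=\dnorm{BC}=\dnorm{CD}$ as saying that the $x$-coordinates of $A,B,C,D$ form an arithmetic progression. This rests on an assumption about the difference-angle norm from \cite{Nakazato2025Base1}: that on $y=\kappa x^2$ the chord $XY$ has slope $\kappa(x_X+x_Y)$, so the difference-angle norm of a chord is proportional to the difference of the abscissae. The affine shear $(x,y)\mapsto(x+c,\;y+2\kappa c x+\kappa c^2)$ preserves $\mathcal P$, preserves areas and preserves this norm. Using it, we may centre the progression and write
\[
A=(-3h,9\kappa h^2),\quad B=(-h,\kappa h^2),\quad C=(h,\kappa h^2),\quad D=(3h,9\kappa h^2),\quad P=(t,\kappa t^2),
\]
with $|t|>3h$. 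Since the claim is area invariance and the shear has determinant $1$, nothing is lost by this normalization.

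Next we compute the two auxiliary points. The chord through abscissae $x_1,x_2$ is $y=\kappa(x_1+x_2)x-\kappa x_1x_2$. Intersecting $AC$ with $PB$, and $BD$ with $PC$, gives
\[
x_Q=\frac{h(3h-t)}{t+h},\qquad x_R=\frac{h(3h+t)}{t-h}.
\]
The $y$-coordinates are then read off from the chord equations, e.g.\ $y_Q=-2\kappa h\,x_Q+3\kappa h^2$ and $y_R=2\kappa h\,x_R+3\kappa h^2$. The reflection $x\mapsto -x$ fixes $\mathcal P$, swaps $B\leftrightarrow C$ and $A\leftrightarrow D$, and sends $t\mapsto -t$, which interchanges the roles of $Q$ and $R$. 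This symmetry is a useful check on the formulas.

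We then evaluate the signed area of $BCRQ$ by the shoelace formula. We split it as $[BCR]+[BRQ]$, or better as $[BCR]+[BQR]$ taken with the appropriate sign. Since $BC$ is horizontal at height $\kappa h^2$, the first term is simply $h\,(y_R-\kappa h^2)$. The remaining term is a $2\times 2$ determinant in the differences $Q-B$ and $R-B$. Substituting the expressions above, everything becomes a rational function of $t$ whose denominator divides $(t+h)(t-h)$. The goal is to show that the numerator is a constant multiple of $t^2-h^2$, so that the area reduces to a constant of the form $c\,\kappa h^3$.

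The main obstacle is this cancellation. I would organize it by writing $y_Q-\kappa h^2=2\kappa h(h-x_Q)$ and $y_R-\kappa h^2=2\kappa h(x_R+h)$, and noting that
\[
h-x_Q=\frac{2h(t-h)}{t+h},\qquad x_R+h=\frac{2h(t+h)}{t-h}.
\]
Thus the two heights are reciprocal-type factors whose product is independent of $t$. I expect the area to collapse to a combination of these heights in which the factors $(t\pm h)/(t\mp h)$ cancel, with the reflection symmetry $t\mapsto -t$ guaranteeing that only even terms survive. Finally, the range $|t|>3h$ (that is, $P$ lying off the arc $\overset{\frown}{AD}$ containing $B$) is used only to fix the orientation, and hence the sign, so that the unsigned area is constant.
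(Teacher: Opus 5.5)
Your normalization and every formula you display are correct. The shear fixes $\mathcal P$ and has determinant $1$; also $x_Q=\frac{h(3h-t)}{t+h}$, $x_R=\frac{h(3h+t)}{t-h}$, and $(h-x_Q)(x_R+h)=4h^2$. Reading the hypothesis as an arithmetic progression of abscissae matches the paper's later remark on ``linear spacing in the difference-angle norm''. Since the gaps are unsigned, you should add that distinctness of the four points rules out $x_C=x_A$ and $x_D=x_B$.

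The gap is that the proof stops exactly where its content lies. You announce constancy of the area as an expectation, and the reasons you give do not establish it. In your split, $[BCR]=h(y_R-\kappa h^2)=4\kappa h^3\frac{t+h}{t-h}$ is itself non-constant, so a $t$-independent product of the two heights proves nothing on its own. The symmetry $t\mapsto -t$ only shows that the signed area is an even function of $t$, which is far from constant. To finish along your route you still have to compute $[BRQ]=\tfrac12(R-B)\times(Q-B)=-2\kappa h\,x_Q(x_R+h)=4\kappa h^3\frac{t-3h}{t-h}$. Only then do you get $[BCR]+[BRQ]=8\kappa h^3$.

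The paper gives no proof of this parabolic statement; it is quoted from a published problem. Its proof of the hyperbolic analogue, however, shows the clean way to close. $BR$ and $CQ$ are the diagonals of $BCRQ$, and they lie on $BD$ and $AC$. So the signed area is $\tfrac12(R-B)\times(Q-C)$, with $R-B=(x_R+h)(1,2\kappa h)$ and $Q-C=(x_Q-h)(1,-2\kappa h)$. This gives $\tfrac12\,(x_R+h)(x_Q-h)\cdot(-4\kappa h)=\tfrac12(-4h^2)(-4\kappa h)=8\kappa h^3$. Your constant product is exactly the needed input; it belongs to the diagonals, not to the heights.

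Finally, $|t|>3h$ is not what fixes the orientation, since the signed area equals $8\kappa h^3$ for every $t\neq\pm h$. Its role is convexity. For $|t|>3h$ one has $x_Q<0<x_C$ and $x_B<0<x_R$, so both diagonals pass through $AC\cap BD=(0,3\kappa h^2)$, and the area of $BCRQ$ really is $8|\kappa|h^3$. For $P$ on the arc this fails: with $\kappa=h=1$ and $t=0$, the polygon $BCRQ$ is a bow-tie of geometric area $10$, while its signed area is still $8$.
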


\begin{remark}
Since this problem was originally intended for high school students,
the area is considered in the Euclidean sense in $\mathbb{R}^2$.
However, this quantity can be justified within difference-angle geometry.
\end{remark}

The above two problems are originally published in
\emph{University Mathematics} (Tokyo Publishing),
and the copyright belongs to the publisher.
We now present a new result in the hyperbolic setting.
Here, $x_P$ denotes the $x$-coordinate of a point $P$.

\begin{figure}[htbp]
  \centering
    \includegraphics[scale=1.0]{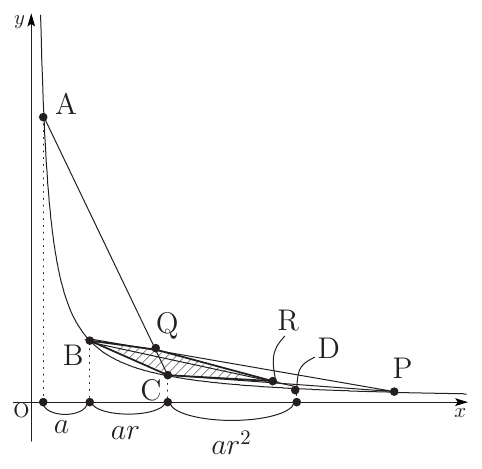}
   \caption{Hyperbolic analogue of the area invariance configuration.
The points $A,B,C,D$ form a geometric progression on $\mathcal H: xy=\kappa$,
corresponding to equal arcs on the circle and equal difference-angle norms on the parabola.
In all cases, the area of the quadrilateral $BCRQ$ is invariant under the motion of $P$.}
    \label{fig:area-invariance-hyperbola}
\end{figure}

\begin{proposition}[Hyperbolic version: area invariance under geometric progression]
\label{prop:hyperbola-area-invariance}
Let $a,r>0$ with $r\neq1$.
On the hyperbola
\[
\mathcal H:\ xy=\kappa \qquad (x>0),
\]
take four distinct points $A,B,C,D$ such that
\[
x_A=a,\quad x_B=ar,\quad x_C=ar^2,\quad x_D=ar^3.
\]
Let $P$ be a point on $\mathcal H$ excluding the arc $\overset{\frown}{AD}$ containing $B$.
Define $Q:=AC\cap PB$ and $R:=BD\cap PC$.

Then the area of the quadrilateral $BCRQ$ is independent of the position of $P$.
\end{proposition}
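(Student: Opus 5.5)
We normalize first and then compute directly. The maps $(x,y)\mapsto(x/a,\,ay)$ and $(x,y)\mapsto(x,\,y/\kappa)$ preserve the family of rectangular hyperbolas with the coordinate axes as asymptotes. They send lines to lines and multiply all areas by a common constant. So we may assume $\kappa=1$ and $a=1$. The four points are then $X(t)=(t,1/t)$ for $t=1,r,r^2,r^3$, and $P=X(p)$ with $p>0$, where $p$ lies outside the parameter interval between $1$ and $r^3$ that contains $r$. It suffices to show that the area of $BCRQ$ does not depend on $p$ in this normalized setting.

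The basic tool is the chord equation already used in the proof of \cref{thm:hyperbolic-power}: the line through $X(s)$ and $X(t)$ is $x+st\,y=s+t$. Rather than computing $Q$ and $R$ and applying the shoelace formula to four rational points, we decompose the area. Since $Q\in PB$ and $R\in PC$, and $P$ lies off the arc containing $B,C$, the quadrilateral $BCRQ$ is the triangle $PBC$ with the triangle $PQR$ removed. Hence
\[
\operatorname{Area}(BCRQ)=\operatorname{Area}(PBC)\bigl(1-\lambda\mu\bigr),
\qquad
\lambda=\frac{PQ}{PB},\quad \mu=\frac{PR}{PC}.
\]
Both ratios can be read off from $x$-coordinates along each line, for example $\lambda=(x_Q-p)/(r-p)$. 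The coordinate $x_Q$ comes from solving $x+r^2y=1+r^2$ together with $x+rp\,y=r+p$. Eliminating $x$ gives $y_Q=\dfrac{1+r^2-r-p}{r(r-p)}$, and then $x_Q$ follows. The same procedure for $R$ uses the lines $x+r^4y=r+r^3$ and $x+r^2p\,y=r^2+p$.

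For the triangle we use the standard formula for three points on $xy=1$:
\[
\operatorname{Area}\bigl(X(s)X(t)X(w)\bigr)=\frac{|(s-t)(t-w)(w-s)|}{2\,stw}.
\]
This gives $\operatorname{Area}(PBC)=\dfrac{|(p-r)(r-r^2)(r^2-p)|}{2pr^3}$. The factors $(r-p)$ and $(r^2-p)$ also appear in the denominators of $\lambda$ and $\mu$. We therefore expect that, after substitution, $(1-\lambda\mu)$ carries a factor $p\cdot(\text{constant})/\bigl((r-p)(r^2-p)\bigr)$ up to sign. Multiplying by $\operatorname{Area}(PBC)$ would then leave a function of $r$ alone.

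The main obstacle is this final cancellation: verifying that $1-\lambda\mu$ simplifies to exactly the needed form. A secondary issue is the sign bookkeeping. The decomposition $BCRQ=PBC\setminus PQR$ requires $Q$ to lie between $P$ and $B$ and $R$ between $P$ and $C$, that is $0<\lambda,\mu<1$. This holds precisely because $P$ is off the arc $\overset{\frown}{AD}$ containing $B$. We would check it separately in the two cases $p<1$ and $p>r^3$, taking $r>1$ without loss of generality (the case $r<1$ reverses the order), using signed areas so the two cases are treated uniformly. If the cancellation is messy, we would first test $p\to0^+$ and $p\to\infty$ to predict the constant, and then confirm it by polynomial identity in $p$.
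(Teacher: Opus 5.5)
Your argument is correct, but it is organized differently from the paper's. Both routes use the chord equation and the $x$-coordinates of $Q$ and $R$. The paper, however, never uses triangle $PBC$. It applies the diagonal formula $[BCRQ]=\frac12|\overrightarrow{QC}\times\overrightarrow{RB}|$ and notes that these diagonals lie on the fixed chords $AC$ and $BD$, so the cross product of their directions is a constant. That reduces everything to showing that $(x_C-x_Q)(x_B-x_R)$ does not depend on $p$.

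You instead write $[BCRQ]=[PBC](1-\lambda\mu)$, and the cancellation you flag as the main obstacle does hold. With $a=\kappa=1$,
\[
\lambda=\frac{(1-p)(r^2-p)}{(r-p)^2},\qquad
\mu=\frac{(r-p)(r^3-p)}{(r^2-p)^2},\qquad
1-\lambda\mu=\frac{p\,(r-1)^2(r+1)}{(r-p)(r^2-p)}.
\]
Since $(r-p)(r^2-p)>0$ for every admissible $p$, multiplying by $[PBC]=\frac{|(p-r)(r-r^2)(r^2-p)|}{2pr^3}$ leaves $\frac{(r+1)|r-1|^3}{2r^2}$, which is the constant the paper states.

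Your betweenness claim is also exactly right. The quantities $1-\lambda=p(r-1)^2/(r-p)^2$ and $1-\mu=rp(r-1)^2/(r^2-p)^2$ are positive for all $p>0$. Meanwhile $\lambda>0$ iff $p$ is not between $1$ and $r^2$, and $\mu>0$ iff $p$ is not between $r$ and $r^3$. Together these say precisely that $P$ is off the arc $AD$ containing $B$, for $r>1$ and $r<1$ alike, so your relabeling step is unnecessary.

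The trade-off between the two routes is as follows.
\begin{itemize}
\item The paper's diagonal formula is a signed-area identity valid for any quadrilateral, so it skips the sign analysis and isolates the $p$-dependence in a single product. It does, however, tacitly assume that $BCRQ$ is a simple quadrilateral.
\item Your decomposition costs the betweenness check, but in exchange it proves that $BCRQ$ is convex. It also makes the cancellation transparent: the factor $p/((r-p)(r^2-p))$ in $1-\lambda\mu$ is exactly what removes the $p$-dependence of $[PBC]$.
\end{itemize}

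Your computation also gives an independent check on the paper's. The paper's intermediate expressions for $x_B-x_R$ and $|x_C-x_Q|\,|x_B-x_R|$ are off by a factor of $r$ in absolute value. The correct values are $\frac{ar^2(r-1)(p-ar)}{ar^2-p}$ and $a^2r^2(r-1)^2$, though the paper's final constant is right.
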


\begin{proof}
Since areas are scaled only by a constant factor under affine transformations,
we may assume $\kappa=1$, so that $\mathcal H:xy=1$.

We parametrize points on the hyperbola by
\[
X(t):=\Bigl(t,\frac{1}{t}\Bigr)\qquad (t>0).
\]
Then
\[
A=X(a),\quad B=X(ar),\quad C=X(ar^2),\quad D=X(ar^3),\quad P=X(p)\ (p>0).
\]

\medskip
\noindent
\textbf{Lemma 1 (Equation of a chord).}
For $t_1\neq t_2$, the line through $X(t_1)$ and $X(t_2)$ is given by
\[
y=\frac{(t_1+t_2)-x}{t_1t_2}.
\]
\emph{Proof.} This follows immediately from the two-point form. \qed

\medskip
Thus each chord is described by the sum $S=t_1+t_2$ and product $T=t_1t_2$:
\[
L(t_1,t_2):\quad y=\frac{S-x}{T}.
\]

\medskip
\noindent
\textbf{Lemma 2 (Intersection of two chords).}
Let
\[
L(t_1,t_2)\cap L(t_3,t_4)=:Z.
\]
Then
\[
x_Z=\frac{(t_3+t_4)\,t_1t_2-(t_1+t_2)\,t_3t_4}{t_1t_2-t_3t_4}.
\]
\emph{Proof.} Solve $\frac{S_{12}-x}{T_{12}}=\frac{S_{34}-x}{T_{34}}$. \qed

\medskip
Using this, we compute $Q$ and $R$.

For
\[
Q=L(a,ar^2)\cap L(p,ar),
\]
we obtain
\[
x_Q=\frac{a^2r^2+arp-ar^2p-ap}{ar-p},
\]
hence
\[
x_C-x_Q=\frac{a(ar^2-p)(r-1)}{ar-p}.
\]

Similarly, for
\[
R=L(ar,ar^3)\cap L(p,ar^2),
\]
we obtain
\[
x_R=\frac{a^2r^4+ar^2p-ar^3p-arp}{ar^2-p},
\]
hence
\[
x_B-x_R=\frac{a r^3(ar-p)(r-1)}{ar^2-p}.
\]

\medskip
We now compute the area structurally.
Since $Q,C$ lie on $AC$ and $R,B$ lie on $BD$,
we have
\[
[\,BCRQ\,]=\frac{1}{2}\,|\overrightarrow{QC}\times\overrightarrow{RB}|.
\]

Taking direction vectors
\[
\vec d_{AC}=(1,-\tfrac{1}{a^2r^2}),\qquad
\vec d_{BD}=(1,-\tfrac{1}{a^2r^4}),
\]
we obtain
\[
\overrightarrow{QC}=(x_C-x_Q)\vec d_{AC},\qquad
\overrightarrow{RB}=(x_B-x_R)\vec d_{BD}.
\]

Thus
\[
[\,BCRQ\,]
=
\frac{1}{2}\,|x_C-x_Q|\,|x_B-x_R|\cdot|\vec d_{AC}\times\vec d_{BD}|.
\]

A direct computation gives
\[
\vec d_{AC}\times\vec d_{BD}
=
\frac{r^2-1}{a^2r^4}.
\]

On the other hand,
\[
|x_C-x_Q|\,|x_B-x_R|
=
a^2r^3(r-1)^2,
\]
where the dependence on $p$ cancels.

Therefore,
\[
[\,BCRQ\,]
=
\frac12\cdot a^2r^3(r-1)^2\cdot \frac{|r^2-1|}{a^2r^4}
=
\frac{(r+1)|r-1|^3}{2r^2},
\]
which is independent of $p$.
\end{proof}

\begin{remark}[Structural interpretation]
Taking $x_A,x_B,x_C,x_D$ in geometric progression
corresponds to equal spacing of $\log t$ in the parameter $t=x$.

Thus, this result can be viewed as a hyperbolic analogue of
area invariance arising from:
\begin{itemize}
\item equal arc-length spacing on a circle,
\item linear spacing in the difference-angle norm on a parabola,
\item logarithmic spacing on a hyperbola.
\end{itemize}
\end{remark}

\section{First-Order Degenerate Limit of the Cayley--Klein Angle}

In this appendix, we establish that the difference angle arises as a first-order
degenerate limit of the Cayley--Klein angle.
This corresponds to a linear-type degeneration, in contrast to the logarithmic
degeneration leading to the affine angle.
A detailed geometric analysis will be given elsewhere.

\vspace{1ex}

In Cayley--Klein geometry, consider the isotropic directions
\[
m_{t,1}=\frac{1}{t}, \qquad
m_{t,2}=-\frac{1}{t}
\]
with $t\to 0$.
For two rays $\ell_1,\ell_2$ with slopes $m_1,m_2\neq0$,
we consider the cross ratio
\[
\Cr^\vee(\ell_1,\ell_2;I_{t,1},I_{t,2})
=
\Cr\!\left(
\frac{1}{m_1},\frac{1}{m_2};
t,-t
\right).
\]

\begin{lemma}[First-order expansion]
\[
\log\Cr^\vee(\ell_1,\ell_2;I_{t,1},I_{t,2})
=
-2(m_1-m_2)t + O(t^3).
\]
\end{lemma}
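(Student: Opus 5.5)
The plan is to compute the cross ratio explicitly and then expand the logarithm. I will use the same convention for $\Cr$ as the one behind the identity $cr(x,y;0,\infty)=x/y$ used in the proof of \cref{thm:CR-Lambda-property2}, namely
\[
\Cr(a,b;c,d)=\frac{(a-c)(b-d)}{(a-d)(b-c)},
\]
which indeed reduces to $x/y$ when $c=0$ and $d\to\infty$.

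First, substitute $a=1/m_1$, $b=1/m_2$, $c=t$, $d=-t$. Multiplying the numerator and denominator by $m_1m_2$ clears the reciprocals and gives
\[
\Cr^\vee(\ell_1,\ell_2;I_{t,1},I_{t,2})
=\frac{(1-m_1t)(1+m_2t)}{(1+m_1t)(1-m_2t)} .
\]
For $|t|<\min(1/|m_1|,1/|m_2|)$, each factor is positive. Hence the logarithm is real and splits as
\[
\log\frac{1-m_1t}{1+m_1t}+\log\frac{1+m_2t}{1-m_2t}
=-2\,\operatorname{artanh}(m_1t)+2\,\operatorname{artanh}(m_2t).
\]

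Second, I will use the series $\operatorname{artanh}(z)=z+z^3/3+O(z^5)$ near $z=0$. This yields
\[
\log\Cr^\vee=-2(m_1-m_2)t-\tfrac{2}{3}(m_1^3-m_2^3)t^3+O(t^5),
\]
and in particular the claimed $-2(m_1-m_2)t+O(t^3)$.

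The one point deserving care, rather than any real obstacle, is the absence of a $t^2$ term. This follows structurally: the expression is an odd function of $t$, since replacing $t$ by $-t$ inverts the cross ratio. I will also note that the implied constant in $O(t^3)$ depends on $m_1,m_2$, and that the expansion is valid in the range of $t$ described above, where the logarithm is real.
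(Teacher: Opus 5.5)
Your proposal is correct and follows essentially the same route as the paper: compute $\Cr^\vee$ explicitly as $\frac{(1-m_1t)(1+m_2t)}{(1+m_1t)(1-m_2t)}$, then expand the logarithm. Your inverse-hyperbolic-tangent series and the observation that the expression is odd in $t$ make the paper's brief remark that ``the $O(t^2)$ terms cancel by symmetry'' precise. This matters because the paper's stated $\log(1+x)=x+O(x^2)$ would by itself give only an $O(t^2)$ error.
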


\begin{proof}
A direct computation yields
\[
\Cr^\vee
=
\frac{(1-m_1t)(1+m_2t)}
     {(1+m_1t)(1-m_2t)}.
\]
Taking logarithms and using $\log(1+x)=x+O(x^2)$,
the $O(t^2)$ terms cancel by symmetry, and we obtain
\[
\log\Cr^\vee
=
-2(m_1-m_2)t + O(t^3).
\]
\end{proof}

To obtain a finite limit, define
\[
\alpha(t)=-2t.
\]
Then
\[
\lim_{t\to0}
\frac{1}{\alpha(t)}
\log\Cr^\vee(\ell_1,\ell_2;I_{t,1},I_{t,2})
=
m_1-m_2.
\]

Thus, as a first-order degeneration of the Cayley--Klein angle,
we recover the difference of slopes
\[
\Pangle(\ell_1,\ell_2)=m_1-m_2.
\]
In particular, the difference angle arises naturally
as a linear degeneration of the Cayley--Klein angle.

%%%%%%%%%%%%%%%%%%%%%%%%%%%%%%%%%%%%%%%%%%%%%%%%%%%%%%%%%%%

\bibliographystyle{plain}
\bibliography{affine_angle_ref}

\end{document}